\newcommand{\xRightarrow}[2][]{\ext@arrow 0359\Rightarrowfill@{#1}{#2}}
\newtheorem{thm}{Theorem}[section]
\newtheorem{prop}[thm]{Proposition}
\theoremstyle{plain}
\newtheorem{lem}[thm]{Lemma}
\theoremstyle{plain}
\theoremstyle{definition}
\theoremstyle{remark}
\newtheorem{rmq}{Remark}
\newcommand{\Var}{\text{VaR}_{\alpha}(X)}
\newcommand{\CVar}{\text{CVaR}_{\alpha}(X)}
\newcommand{\tEn}{\tilde{\mathcal{E}}_n}
\newcommand{\tEnun}{\tilde{\mathcal{E}}_n^1}
\newcommand{\tEndeux}{\tilde{\mathcal{E}}_n^2}
\newcommand{\Tznp}{\tilde{\mathbb{Z}}_{n+1}}
\newcommand{\Tzn}{\tilde{\mathbb{Z}}_n}
\newcommand{\un}{\mathds{1}}
\newcommand{\Znp}{Z_{n+1}}
\newcommand{\Zn}{Z_n}
\newcommand{\Tztn}{\tilde{\mathbb{Z}}^{(n)}}
\newcommand{\tZnp}{\widetilde{Z}_{n+1}}
\newcommand{\tZn}{\widetilde{Z}_{n}}
\newcommand{\tZnpd}{\widetilde{Z}_{n+1}^{(2)}}
\newcommand{\tZnd}{\widetilde{Z}_{n}^{(2)}}
\newcommand{\ta}{\theta_{\alpha}}
\newcommand{\tn}{\theta_{n}}
\newcommand{\btn}{\bar{\theta}_{n}}
\newcommand{\tTn}{\tilde{\mathbb{T}}_n}
\newcommand{\tTnp}{\tilde{\mathbb{T}}_{n+1}}
\newcommand{\Tn}{\mathbb{T}_n}
\newcommand{\Vtn}{\mathbb{V}_{n}}
\newcommand{\Vtnp}{\mathbb{V}_{n+1}}
\newcommand{\con}{\mathcal{O}_n}
\newcommand{\tcon}{\widetilde{\mathcal{O}}_n}
\newcommand{\tconp}{\widetilde{\mathcal{O}}_{n+1}}
\newcommand{\tconn}[1]{\tcon^{(#1)}}
\newcommand{\conp}{\mathcal{O}_{n+1}}
\newcommand{\btnp}{\bar{\theta}_{n+1}}
\newcommand{\kan}{k_{\alpha,n}}
\newcommand{\tnp}{\theta_{n+1}}
\newcommand{\vta}{\vartheta_{\alpha}}
\newcommand{\vtn}{\vartheta_{n}}
\newcommand{\vtnp}{\vartheta_{n+1}}
\newcommand{\bvtn}{\widehat{\vartheta}_{n}}  
\newcommand{\bvtnp}{\widehat{\vartheta}_{n+1}} 
\newcommand{\Fn}{\mathcal{F}_{n}}
\newcommand{\E}{\mathbb{E}}
\newcommand{\R}{\mathbb{R}}
\newcommand{\N}{\mathbb{N}}
\newcommand{\M}{\mathcal{M}}
\newcommand{\MM}{\mathbb{M}}
\newcommand{\F}{\mathcal{F}}
\renewcommand{\and}{\mbox{ and }}
\title[Non asymptotic controls for a superquantile approximation algorithm]{Non asymptotic controls on a recursive superquantile approximation}
\author{ M. Costa $\ddagger$}
\address{$\ddagger$ Institut de Math\'ematiques de Toulouse, UMR 5219\\Université de Toulouse, CNRS\\ UPS, F-31062 Toulouse Cedex 9, France}
\email{manon.costa@math.univ-toulouse.fr}
\author{ S. Gadat $\S$  }
\address{$\S$   Toulouse School of Economics, Institut Universitaire de France}
\email{sebastien.gadat@tse-fr.eu}
\date{\today}
\begin{document}
\thanks{S. G. gratefully acknowledges financial support from the Agence Nationale de la Recherche (MaSDOL grant ANR-19-CE23-0017).  S. G. is member of
the Institut Universitaire de France (IUF), and part of this work has been carried out with
financial support from the IUF. M. C. has been supported by the Chaire “Mod\'elisation Math\'ematique et Biodiversit\'e” of Veolia
Environnement-\'Ecole Polytechnique-Museum national d'Histoire naturelle-Fondation X. P. C.
}
	
\begin{abstract}
In this work, we study a new recursive stochastic algorithm for the joint estimation of quantile and superquantile of an unknown distribution. The novelty of this algorithm is to use the Cesaro averaging of the quantile estimation \textit{inside} the recursive approximation of the superquantile. We provide some sharp non-asymptotic bounds on the quadratic risk of the superquantile estimator for different step size sequences. We also prove new non-asymptotic $L^p$-controls on the Robbins Monro algorithm for quantile estimation and its averaged version. Finally, we derive a central limit theorem of our joint procedure using the diffusion approximation point of view hidden behind our stochastic algorithm.

\end{abstract}

\subjclass[2010]{Primary: 62L20; Secondary: 60F05; 62P05}
\keywords{Stochastic approximation; Quantile and Superquantile; Non-asymptotic controls, Diffusion approximation.}
\maketitle

\section{Introduction}
\subsection{Quantiles and superquantiles}
In this paper, we are interested in the estimation of a standard financial risk measure with a recursive stochastic algorithm. Let be given a real random variable $X$ that mimics the outcome of a portfolio of some financial assets, \cite{axiomatique} introduces a set of axioms that shall describe in actuarial science and financial economics a \textit{coherent} risk measure (denoted by $\rho$ in this introduction). One of these key properties is that $\rho$ must bring the diversification principle: the risk of two portfolios together is less than the two individual risks, which means from a mathematical point of view that for two random variables $Z_1$ and $Z_2$, $\rho$ satisfies $\rho(Z_1+Z_2) \leq \rho(Z_1)+\rho(Z_2)$. In some recent works, this sub-additive property has been relaxed and replaced by a convex axiom: $\rho(\lambda Z_1+(1-\lambda) Z_2) \leq \lambda \rho(Z_1)+(1-\lambda)\rho(Z_2)$.
If the value at risk (VaR), \textit{i.e.} a quantile of a random variable, is a very popular measure of risk in finance, it appears that it is not a coherent risk measure since it does not satisfy the \textit{diversification principle}. Oppositely, \cite{Artzner97} introduced a Conditional Value at Risk (CVaR) measure (or expected shortfall), which is shown to be a coherent risk measure in \cite{Pflug00}.
More precisely, if $X$ refers to the outcome of a portfolio, we define by $F$ the cumulative distribution function:
$$
\forall x \in \R \qquad F(x) = \mathbb{P}(X \leq x),
$$
and for a given $\alpha \in (0,1)$, the quantile $\theta_{\alpha}$ is denoted by:
\begin{equation}\label{def:quantile}
\ta  := \inf  \left\{\theta  \in \R  \, \vert \, F(\theta) \geq \alpha \right\}.
\end{equation}
Note that in many works in actuarial and financial mathematics, the notation $\Var$ or $ES_{\alpha}(X)$ (for expected shortfall) is used instead of $\ta$. For a given $\alpha \in (0,1)$, the superquantile is then defined as soon as the random variable $X$ is $L^1$, with the help of:
\begin{equation}\label{def:super_quantile}
\vta := \mathbb{E}[ X \, \vert \, X \geq \ta] = \frac{\mathbb{E}[X \un_{X \geq \ta}]}{1-\alpha}.
\end{equation}
Superquantiles are often adopted as a measure of risk when modeling a risk-averse decision maker and are commonly denoted by $\CVar$ in financial economics and mathematics. More precisely,
$\vta$  quantifies a risk-averse constraint on asset return prices: for a (log)-return weekly price $Z$ of a portfolio, we are commonly interested in the estimation or in some guarantees of the super-quantile of $-Z$ when $\alpha = 95\%$, which translates the average loss induced by the worst $5\%$ scenario.
Besides 
being a coherent risk measure, the superquantile also translates more information on the tail of the distribution of the random variable $X$.\\

In our paper, we develop a new method for jointly estimating $(\ta,\vta)$ and obtain some non-asymptotic guarantees for the quadratic loss of estimation, which is a novel type of result for such a kind of estimation problem with stochastic algorithms. We also obtain a central limit theorem associated to our method.

\subsection*{Related works}
Estimating quantiles has a longstanding history in statistics: except in parametric models where explicit formulas are available, the estimation of quantiles is a real issue. It may be tackled with the help of Monte-Carlo approaches when some simulation tools of the random variable $X$ are available, or using the Extreme Value Theory when $\alpha$ is close to $0$ or $1$ (see \textit{e.g.} \cite{EVT,Embrechts99}). 
Another popular point of view is to approximate the behaviour of the $\Var$ with correcting linear and quadratic terms (see \textit{e.g.} \cite{NonlinearVar1,NonlinearVar2,NonlinearVar3}).
Finally, a large amount of work is based on order statistics to derive some estimation procedures of $\Var$. We refer to \cite{Garivier1} for an overview of some recent uses of order statistics for quantile estimation with some prospects in the field of computer experiments.

The development of new mathematical studies on superquantiles started with \cite{Ben-Tal} and \cite{Cvar}, and use some variational formulation of $\CVar$ as a solution of a convex optimization problem, to derive some estimation strategy. In particular, their approach does not require any parametric form of the distribution of $X$. In \cite{Garivier2}, the authors exploit some Monte-Carlo strategy coupled with some order-statistics to estimate superquantiles and they establish a central limit theorem associated with their estimation method. 
Oppositely, many (econometric or finance) works are developed with an important parametric a priori modeling (see \textit{e.g.} \cite{Hall-Yao,Wang-Zhao,Embrechts}) and owing to their parametric or semi-parametric point of view, the methods cited above obviously suffer from robustness issues.

All the previous methods are based on a batch framework, where the user observes the whole set of samples before starting the estimation procedure. Nevertheless, in some concrete situations, the observations may only record on-line, while in other big-data situations, the observations are simply too numerous to be handled with a batch method. In these cases, we  are led to consider some recursive strategies where we estimate quantiles and superquantiles using the observations sequentially, with the help of a stochastic approximation algorithm (see, \textit{e.g.}, \cite{Kushner_Yin03,benveniste_metivier_priouret}).
The recursive quantile algorithm is a classical example of such a method (see \textit{e.g.} \cite{Duflo97}) and has led to recent numerous contributions to the generalization of geometric medians (see, among others  \cite{godichon2015,Cenac,Godichon}). 

The classical algorithm can be written as:
\begin{equation}\label{eq:algo_classique}
\left\{ \begin{aligned}
  &\tnp =\tn-a_n \left( \un_{X_{n+1} \leq \tn}-\alpha\right)\\
  &\vtnp = \,\vtn+b_n \left( \frac{X_{n+1}}{1-\alpha}\un_{X_{n+1} >\tn} - \vtn\right) 
\end{aligned}\right.
\end{equation}
Finally, the recent work of \cite{bardou2009computing} developed a different stochastic algorithm that estimates both $\CVar$ and $\Var$ within the same joint procedure. Indeed, \cite{bardou2009computing} used the current value of $(\tn)_{n \geq 1}$ and modified $(\vtn)_{n \ge 1}$ into:
$$\begin{aligned}
\widetilde{\vartheta}_{n+1}&=\widetilde{\vartheta}_n+b_n \left( \tn+\frac{(X_{n+1}-\tn)}{(1-\alpha)}\un_{X_{n+1} >\tn} - \widetilde{\vartheta}_n\right)\\
&=\widetilde{\vartheta}_n+b_n(L(\tn,X_{n+1})-\widetilde{\vartheta}_n),
\end{aligned}
$$
to estimate $\vta$. Their main idea is to consider a convexified version of the algorithm since the update function $L(\theta,x)=\theta+\left(\frac{x-\theta}{1-\alpha}\right)\un_{x>\theta}$  satisfies that $\theta\mapsto\E(L(\theta,X))$ is convex. Then they use a Ruppert-Polyak averaging procedure, introduced in the seminal contributions \cite{Ruppert,polyakjuditsky}, to obtain a central limit theorem for their stochastic algorithm. In the recent work \cite{BCG1}, we developed a purely asymptotic study of these two algorithms and derive almost sure properties (Quadratic Strong Law and Law of the Iterated Logarithm) as well as a central limit theorem.
 

\subsection{Algorithm}
 
In this paper, we develop a stochastic procedure, that may be closely related to the one of \cite{bardou2009computing,BCG1}, to jointly estimate the quantile and the superquantile of an unknown distribution: we expect to benefit from the acceleration of the Cesaro averaging procedure with $(\btn)_{n \geq 1}$ to obtain a better recursion on the superquantile estimation.\\
For this purpose, we first introduce the following three-dimensional stochastic algorithm, which is a generalization of the stochastic algorithm \eqref{eq:algo_classique}:
\begin{equation}\label{eq:algo}
\left \{
\begin{aligned}
    &\tnp =\tn-a_n \left( \un_{X_{n+1} \leq \tn}-\alpha\right)\\
    &\btnp = \btn \frac{n}{n+1} + \frac{1}{n+1} \tnp = \displaystyle{\frac{1}{n} \sum_{k=1}^{n+1} \theta_k}\\
    &\bvtnp = \,\bvtn+b_n \left( \frac{X_{n+1}}{1-\alpha}\un_{X_{n+1} >\btn} - \bvtn\right) 
\end{aligned}
\right. .
\end{equation}

The sequence $(\tn)_{n \geq 1}$ corresponds to the standard recursive quantile algorithm (see \textit{e.g.} \cite{Duflo97}), whereas $(\btn)_{n \geq 1}$ is the Cesaro average over the past iterations of the initial stochastic gradient descent $(\tn)_{n\geq1 }$. Cesaro averaging is known as an efficient way for reducing the quadratic risk of estimation of $\ta$ (see \textit{e.g.} \cite{Ruppert,polyakjuditsky}). We will provide a sharp non-asymptotic analysis of $(\tn)_{ n \geq 1}$ and $(\btn)_{n \geq 1}$ below since it will be necessary for our study of $(\bvtn)_{n \geq 1}$. Our approach is similar to \cite{Gadat-Panloup} even though for our purpose, we need to derive some upper bounds of the $L^p$ risk of $(\btn)_{n \geq 1}$ for $p = 4$ instead of $p=2$ in \cite{Gadat-Panloup}.
We compare the algorithm proposed in this article with existing ones at the end of this section.


We do not assume any parametric form of this underlying distribution, we only assume that this distribution is absolutely continuous with respect to the real one-dimensional Lebesgue measure $\lambda$, whose density is denoted by $f$:
$$
\forall x \in \R \qquad 
f(x) := \frac{\text{d}\mathbb{P}}{\text{d}\lambda}(x).
$$
We consider the general situation where $f$ is supported over $\mathbb{R}$, \textit{i.e.} we do not make any assumption on the compactness of the support of $f$ and \textit{we do not assume that $f$ is lower bounded by any non-negative constant}.

\medskip 

\noindent
\textbf{Assumption $\mathbf{H}_f$:}
We assume that $f(\ta)>0$, that $\|f\|_{\infty} < + \infty$ and that  $\theta \longmapsto (1+|\theta|) |f'(\theta)|$ is a bounded function. 
We further assume that $X$ has a moment of order strictly larger than $2$:
 $$ \exists p > 2: \qquad \int x^p f(x) \text{d}x < \infty.$$
\noindent
In particular,  $\mathbf{H}_f$ implies that $f$ is L-Lipschitz for a large enough L.
This assumption is very close to the framework used in \cite{bardou2009computing}. The boundedness of $\theta \longmapsto (1+|\theta|) |f'(\theta)|$ is crucial to obtain the non asymptotic bounds and some precises controls of the first and second order terms.
\medskip 

\noindent
\textbf{Assumption $\mathbf{H}_{(a_n,b_n)}$}
The gain sequences satisfy:
$$
a_n =a_1 n^{-a} \qquad \text{and} \qquad b_n = b_1 (n+1)^{-b} \quad \text{with} \quad \frac{1}{2} < a<b \le 1.
$$
This corresponds to the standard  framework of Robbins-Monro algorithm. We specifically consider the case of two-scale algorithms, similarly to \cite{MokkademPelletier2006}. We emphasize that the case where $a=b$ was studied in \cite{bardou2009computing} and the case $b<a$ in \cite{BCG1}.  Here, we also consider the complementary situation where the learning rate of the super-quantile algorithm is faster than the one of the quantile sequence.
\medskip

\subsection{Main results}
We derive non asymptotic estimates of the quadratic loss as well as a $L^p$ upper bound of the recursive quantile estimation $(\btn)_{n\geq 1}$, with $p \geq 1$. We introduce the $L^{p}$ risk associated to $\btn-\ta$ denoted by:
$\MM_{n,2p}$:
$$
\MM_{n,2p} = \E |\btn-\ta|^{2p}.
$$
\begin{thm}\label{theo:rate_averaging}
Assume $\mathbf{H}_f$  and $\mathbf{H}_{(a_n,b_n)}$, then:\begin{itemize}
\item[$i)$] A constant $\Gamma_a$ exists such that the recursive quantile algorithm satisfies:
$$
\E (\btn-\ta)^2 
\leq \frac{\alpha(1-\alpha)}{f(\ta)^2 n} + \kappa' \frac{\alpha(1-\alpha)}{f(\ta)^3} n^{-\left( 2-a\right)\wedge \left(\frac{1}{2}+a\right)} + \Gamma_a n^{-\left( 3-a\right)\wedge \left(\frac{3}{2}+a\right)}\quad \forall n\ge1,
$$
where $\kappa'$ is an explicit constant given in \eqref{def:kappa'}.\\
The optimal choice of $a$ corresponds to $a=3/4$ and in this case
$$
\E (\btn-\ta)^2 \leq \frac{\alpha(1-\alpha)}{f(\ta)^2 n} +  \frac{8\kappa \alpha(1-\alpha)}{7f(\ta)^3} n^{-5/4} + \Gamma_{3/4} n^{-9/4},\quad \forall n\ge1.
$$
\item[$ii)$] For any integer $p \geq 1$, a constant $c_p$ exists such that:
$$
\forall n \geq 1 \qquad
\MM_{n,2p} \leq c_p n^{-p}.
$$ 
\end{itemize}
\end{thm}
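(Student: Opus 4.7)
I would rely on the classical Polyak--Juditsky decomposition for the Cesaro-averaged iterates, using as a preliminary ingredient non-asymptotic $L^{2p}$ bounds on the non-averaged sequence. The starting point is to rewrite one step of the Robbins--Monro recursion as
\begin{equation*}
\tnp - \ta = (1 - a_n f(\ta))(\tn - \ta) - a_n R_n - a_n \DMnp,
\end{equation*}
where $\DMnp := \un_{X_{n+1}\le\tn} - F(\tn)$ is a bounded martingale increment (with $|\DMnp|\le 1$ and $\E[\DMnp^{2}\mid\Fn]=F(\tn)(1-F(\tn))$), and $R_n := F(\tn)-\alpha-f(\ta)(\tn-\ta)$ is the Taylor remainder, which satisfies $|R_n|\le \tfrac{L}{2}(\tn-\ta)^{2}$ by the Lipschitz regularity of $f$ granted by $\mathbf{H}_f$. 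As a prerequisite to both parts I would first prove
\begin{equation*}
\E|\tn-\ta|^{2p} \le C_p\,a_n^{p},\qquad \forall p\ge 1,\ \forall n\ge 1,
\end{equation*}
by a nested induction on $p$ and $n$: expanding $(\tnp-\ta)^{2p}$ and using the Lyapunov contraction $(1-a_n f(\ta))^{2p}\le 1-2p\,a_n f(\ta)+O(a_n^{2})$, the cross terms with $R_n$ are absorbed at the price of a higher-order moment through $\mathbf{H}_f$, while the martingale noise contributes only $a_n^{2}\E[\DMnp^{2}]\le a_n^{2}$ because $|\DMnp|\le 1$.

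For part~$(i)$, I would solve the one-step relation for $f(\ta)(\tn-\ta)$, divide by $a_n$, sum over $k=1,\dots,n$, and perform an Abel transformation to arrive at
\begin{equation*}
f(\ta)(\btn-\ta) \;=\; W_n - V_n - U_n,
\end{equation*}
where $U_n:=\tfrac{1}{n}\sum_{k=1}^n \Delta M_{k+1}$, $V_n:=\tfrac{1}{n}\sum_{k=1}^n R_k$, and
\begin{equation*}
W_n := \frac{1}{n}\left[\frac{\theta_1-\ta}{a_1} - \frac{\tnp-\ta}{a_n} + \sum_{k=2}^{n}(\theta_k-\ta)\!\left(\frac{1}{a_k}-\frac{1}{a_{k-1}}\right)\right].
\end{equation*}
Martingale orthogonality combined with the Taylor expansion $F(\theta_k)(1-F(\theta_k))=\alpha(1-\alpha)+O(|\theta_k-\ta|)$ and the preliminary $L^{2}$-bound $\|\theta_k-\ta\|_{2}^{2}\lesssim a_k$ give
\begin{equation*}
\E U_n^{2} \;=\; \frac{\alpha(1-\alpha)}{n} + O\!\left(\frac{1}{n^{2}}\sum_{k\le n}a_k\right) \;=\; \frac{\alpha(1-\alpha)}{n} + O(n^{-(1+a)}),
\end{equation*}
which yields the leading constant $\alpha(1-\alpha)/f(\ta)^{2}$. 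Minkowski together with $\|R_k\|_{2}\lesssim a_k$ and the discrete estimate $\tfrac{1}{a_k}-\tfrac{1}{a_{k-1}}\sim \tfrac{a}{a_1}\,k^{a-1}$ produces $\|V_n\|_{2}\lesssim n^{-a}$ and $\|W_n\|_{2}\lesssim n^{a/2-1}$. The two surviving cross terms are then bounded by Cauchy--Schwarz:
\begin{equation*}
|\E[U_n V_n]| \lesssim n^{-(\tfrac12+a)}, \qquad |\E[U_n W_n]| \lesssim n^{-(\tfrac32-\tfrac{a}{2})},
\end{equation*}
which are exactly the two competing rates in the theorem; the pure square terms $\E V_n^{2}$ and $\E W_n^{2}$ are strictly of higher order throughout the regime $a\in(\tfrac12,1)$.

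For part~$(ii)$, I would reuse the same decomposition, but applied in $L^{2p}$. Burkholder--Davis--Gundy handles the martingale contribution,
\begin{equation*}
\E|U_n|^{2p} \;\le\; \frac{C_p}{n^{2p}}\,\E\!\left(\sum_{k=1}^n \Delta M_{k+1}^{2}\right)^{p} \;\lesssim\; n^{-p},
\end{equation*}
while Minkowski combined with the preliminary $L^{2p}$-control $\|\theta_k-\ta\|_{2p}\lesssim\sqrt{a_k}$ yields $\|V_n\|_{2p}\lesssim n^{-a}$ and $\|W_n\|_{2p}\lesssim n^{a/2-1}$, both beating $n^{-1/2}$ whenever $a\in(\tfrac12,1)$; the bound $\MM_{n,2p}\le c_p\, n^{-p}$ follows by Minkowski. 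The main obstacle of the whole proof is the preliminary $L^{2p}$-estimate on $\tn-\ta$: since $\mathbf{H}_f$ only requires $f(\ta)>0$ and not a uniform lower bound on $f$, one cannot invoke any global strong convexity of the drift $F(\cdot)-\alpha$, and must instead exploit only its Lipschitz regularity together with the moment assumption $\E|X|^{p}<\infty$ for some $p>2$; the delicate bookkeeping of the constants $C_p,c_p$ and the coupling between moments of different orders introduced by the Taylor remainder $R_n$ is where the technical effort really lies.
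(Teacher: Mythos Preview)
Your decomposition for part~$(ii)$ is exactly the one the paper uses (the Abel transform/Pelletier decomposition followed by Minkowski and a BDG-type $L^{2p}$ martingale bound), and your argument there is correct.

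For part~$(i)$ you take a genuinely different route from the paper. The paper does \emph{not} use the Abel transform for the sharp $L^2$ bound; instead it writes the pair $Z_n=(\tn-\ta,\btn-\ta)$ as a linear recursion $Z_{n+1}=A_nZ_n+\ldots$, diagonalises the $2\times 2$ matrix $A_n$, and studies a recursive inequality for the second coordinate in the new basis. Your Polyak--Juditsky approach is more elementary and, as you outline it, does recover the correct second-order exponents $n^{-(1/2+a)}$ and $n^{-(3/2-a/2)}$ from the cross terms $\E[U_nV_n]$ and $\E[U_nW_n]$ via Cauchy--Schwarz. One small slip: the correction to $\E U_n^2$ comes from $\E|\theta_k-\ta|\lesssim\sqrt{a_k}$, not $a_k$, so the remainder is $O(n^{-(1+a/2)})$ rather than $O(n^{-(1+a)})$; this is still negligible so the conclusion is unaffected. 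The paper's spectral approach is heavier here but pays off later, since the same change of basis is reused in the superquantile analysis and in the CLT.

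The genuine gap is the preliminary estimate $\E|\tn-\ta|^{2p}\le C_p a_n^p$, which you correctly flag as the main obstacle but do not actually resolve. Your plan of expanding $(\tnp-\ta)^{2p}$ around the linearised recursion $(1-a_nf(\ta))(\tn-\ta)-a_nR_n-a_n\DMnp$ does not close the induction: the cross terms involving $R_n$ produce quantities of order $a_n|\tn-\ta|^{2p+1}$, so controlling the $2p$-th moment requires a \emph{higher} moment, and the induction on $p$ runs the wrong way. Since $f$ is not assumed bounded below, there is no global strong convexity of $F(\cdot)-\alpha$ to absorb this. The paper sidesteps the issue entirely by replacing $(\theta-\ta)^{2q}$ with the Lyapunov functions
\[
V_q(\theta)=\Phi(\theta)^q e^{\Phi(\theta)},\qquad \Phi(\theta)=\int_{\ta}^\theta\!\!\int_{\ta}^u f(s)\,ds\,du,
\]
for which one has the \emph{global} inequality $\Phi'(\theta)V_q'(\theta)\ge m\,V_q(\theta)$ (the point being that $\Phi'^2/\Phi$ is bounded below near $\ta$ and $\Phi'^2$ is bounded below away from $\ta$). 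This yields a clean recursion $\E V_q(\tnp)\le (1-ma_n+O(a_n^2))\E V_q(\tn)+c_q a_n^2\E V_{q-1}(\tn)+c_q a_n^{q+1}$, where the remainder now couples to a \emph{lower}-order Lyapunov function, so the induction on $q$ closes in the right direction. The bound $(\theta-\ta)^{2q}\lesssim V_q(\theta)+V_{2q}(\theta)$ then transfers the result back to the raw moments. Without this convexification device (or an equivalent one), your plan for the preliminary step is incomplete.
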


The first item $i)$ of Theorem \ref{theo:rate_averaging} is a consequence of Corollary 6 in \cite{Gadat-Panloup}. However, we precise the computations of the second order term in the framework of the quantile estimation and we obtain precise constants for the second order term. We also point out that this second order term involves $f(\ta)^{-3}$. We observe that in our case, if the second order term involved the local curvature given by the Cramer-Rao lower bound $f(\ta)^{-2}$, this curvature is also involved in the second order term with a worse power ($3$ instead of $2$), that is of course compensated by $n^{-5/4}$.  As indicated in \cite{Gadat-Panloup}, the second order term is better regarding  its dependency with $n$ but maybe degraded by the curvature or the dimension of the ambient space.\\
Even though the second result $ii)$ is stated for a general value of $p$, we emphasize that this result is weaker than that of $i)$ when $p=2$: the first item provides a first order \textit{optimal} result, while the constant $c_p$ given by the second item is pessimistic. 
The first order term is optimal thanks to the Ruppert-Polyak central limit theorem satisfied by $(\btn-\ta)\sqrt{n}$ (see \textit{e.g.} \cite{polyakjuditsky,Ruppert}).
Nevertheless, $ii)$ of Theorem \ref{theo:rate_averaging} will be essential to derive a satisfactory linearization of our superquantile algorithm $(\bvtn)_{n \geq 1}$. Therefore, we should consider Theorem \ref{theo:rate_averaging} as a result by itself and as a cornerstone tool for the next result as well. \\
We also state in Theorem \ref{theo:momentp} similar results for the Robbins Monro algorithm $(\tn)_{n\ge1}$, however in this setting we have no explicit expression of the first order constant.
\medskip 
 
\noindent
Below, we study the properties of the superquantile recursive estimation algorithm and we differentiate our results into two cases that depend on the choice of the sequence $(b_n)_{n\ge 0}$. We first consider the case $b_n=b_1/(n+1)$ and then $b_n=b_1(n+1)^{-b}$ with $1/2<b<1$.\\
We introduce the important notation $V_{\alpha}$:
\begin{equation}
\label{def:Valpha}
V_{\alpha} :=\mathbb{V}\left(X\mathds{1}_{X>\ta}\right)= \int_{\ta}^{+\infty} x^2 f(x) dx -  \left( \int_{\ta}^{+ \infty} x f(x) dx\right)^2.
\end{equation}

\begin{thm}\label{theo:rate_vtn}
Assume $\mathbf{H}_f$  and $\mathbf{H}_{(a_n,b_n)}$.
\begin{itemize}
\item[$i)$] If $b\in(1/2,1)$, then a large enough constant $\Gamma$ exists  such that 
$$
\forall n \ge 1\,, \qquad \E((\bvtn-\vta)^2) \leq \frac{V_\alpha}{2(1-\alpha)^2}b_n + \Gamma n^{-\frac{b+1}{2}}.
$$
\item[$ii)$] If $b=1$ and $b_1>(\frac{1+a}{2})\wedge (\frac{5}{2}-a)$, then a large enough constant $\Gamma$ exists such that:
$$\forall n \ge 1\,, \qquad \E((\bvtn-\vta)^2) \le \frac{C_{\alpha, b_1}}{n}+\Gamma n^{-(1+\frac{a}{2})\wedge (2-a)},$$
where
$$C_{\alpha,b_1}= \frac{4b_1^2\alpha(1-\alpha)}{(2b_1-1)^2f(\ta)^2}\left[1+\sqrt{1+\frac{V_\alpha f(\ta)^2(2b_1-1)}{4\alpha(1-\alpha)^3}} \right]^2.
$$

\end{itemize}
\end{thm}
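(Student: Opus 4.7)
The plan is to linearize the algorithm around the ideal Robbins--Monro scheme that would result from replacing $\btn$ by the true quantile $\ta$. Since $\vta = \E[X\un_{X>\ta}]/(1-\alpha)$, setting
$$\Delta M_{n+1} := \frac{X_{n+1}}{1-\alpha}\un_{X_{n+1} > \ta} - \vta, \qquad R_{n+1} := \frac{X_{n+1}}{1-\alpha}\bigl(\un_{X_{n+1} > \btn} - \un_{X_{n+1} > \ta}\bigr),$$
the recursion for $\bvtn$ takes the form
$$\bvtnp - \vta = (1-b_n)(\bvtn - \vta) + b_n \Delta M_{n+1} + b_n R_{n+1}.$$
The sequence $(\Delta M_{n+1})$ is i.i.d., centered, and satisfies $\E[(\Delta M_{n+1})^2] = V_\alpha/(1-\alpha)^2$, while $R_{n+1}$ is a perturbation vanishing when $\btn = \ta$.

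To control the remainder, observe that $|X_{n+1}| \le |\ta| + |\btn - \ta|$ on $\{X_{n+1} \in [\btn\wedge\ta, \btn\vee\ta]\}$, so Assumption $\mathbf{H}_f$ together with conditioning on $\Fn$ yields
$$\bigl|\E[R_{n+1}\,|\,\Fn]\bigr| \le C_1 |\btn - \ta| \quad \text{and} \quad \E[R_{n+1}^2\,|\,\Fn] \le C_2\bigl(1 + |\btn - \ta|^2\bigr)|\btn - \ta|.$$
Combining with the $L^{2p}$ bounds of Theorem \ref{theo:rate_averaging}(ii) applied for $p=1,2$ gives $\E[R_{n+1}^2] = O(n^{-1/2})$, and a Cauchy--Schwarz estimate furnishes $|\E[(\bvtn - \vta)\E[R_{n+1}|\Fn]]| \le C\sqrt{u_n/n}$ where $u_n := \E[(\bvtn - \vta)^2]$. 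Squaring the decomposition above and exploiting the martingale property $\E[\Delta M_{n+1} | \Fn] = 0$ then produces the quadratic recursion
$$u_{n+1} \le (1-b_n)^2 u_n + b_n^2\,\frac{V_\alpha}{(1-\alpha)^2} + 2b_n(1-b_n)\,\E\bigl[(\bvtn - \vta)\E[R_{n+1}|\Fn]\bigr] + O\bigl(b_n^2 \E[R_{n+1}^2]\bigr).$$

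In case (i), $b \in (1/2,1)$, iterating this recursion and invoking the classical stochastic-approximation estimate $\sum_{k=1}^n b_k^2 \prod_{j=k+1}^n (1-b_j)^2 \sim b_n/2$ produces the leading term $V_\alpha b_n / (2(1-\alpha)^2)$. The $\sqrt{u_n/n}$ bound on the cross term propagates through the iteration to a contribution of order $n^{-(b+1)/2}$, matching the announced remainder $\Gamma n^{-(b+1)/2}$.

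Case (ii), $b=1$, is subtler because the martingale contribution and the cross term with $\btn - \ta$ both decay as $1/n$ and must be aggregated jointly. The approach is to use Young's inequality $2|\bvtn-\vta|\,|\btn-\ta| \le \lambda(\bvtn-\vta)^2 + \lambda^{-1}(\btn-\ta)^2$ and to substitute the sharp first-order bound $\E[(\btn-\ta)^2] \le \alpha(1-\alpha)/(f(\ta)^2 n) + o(1/n)$ from Theorem \ref{theo:rate_averaging}(i). Optimizing the resulting quadratic in $\lambda$ produces the explicit constant $C_{\alpha,b_1}$, whose characteristic square root is precisely the discriminant of this minimization. The condition $b_1 > (1+a)/2 \wedge (5/2 - a)$ guarantees that the contraction factor $1-2b_1/n$ dominates the growth of the perturbation, so that the iteration preserves the $1/n$ decay. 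The main obstacle lies in this last step: two distinct variance contributions --- the intrinsic $V_\alpha/(1-\alpha)^2$ of the superquantile noise and the Ruppert--Polyak variance $\alpha(1-\alpha)/f(\ta)^2$ of the averaged quantile --- must be merged into a sharp constant, which requires using the $L^4$ bound of Theorem \ref{theo:rate_averaging}(ii) to absorb the nonlinear perturbation terms without degrading the leading order.
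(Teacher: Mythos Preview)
Your decomposition and the paper's differ substantially. The paper does not work directly with $u_n=\E[(\bvtn-\vta)^2]$; instead it writes the three-dimensional vector $\con=(\tn-\ta,\btn-\ta,\bvtn-\vta)$, identifies the matrix $B_n$ driving its linearized recursion, and performs an explicit change of basis $\tcon=\widetilde P_n^{-1}\con$ that reduces $B_n$ to a diagonal-plus-nilpotent form. The recursion is then studied for $W_n=\E[(\tconn{3})^2]$, with $\tconn{3}$ a specific linear combination of $\bvtn-\vta$ and $\tn-\ta$; only at the very end is $\E[(\bvtn-\vta)^2]$ recovered from $W_n$ by undoing the change of basis. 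This spectral step is not cosmetic: it is what fixes the coefficient $A_1$ appearing in the closing recursion lemma (Lemma~\ref{lem:recurrence_finale_3}), and the stated constant $C_{\alpha,b_1}$ is exactly the $V$ produced by that lemma with that $A_1$.

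Your direct route can certainly produce an $O(1/n)$ bound in case~(ii), but it cannot give the constant $C_{\alpha,b_1}$ of the statement. When you bound the cross term by $C_1|\btn-\ta|$ with $C_1=|\ta|f(\ta)/(1-\alpha)$ and then apply either Cauchy--Schwarz or Young, the resulting leading constant necessarily carries a factor $|\ta|$, whereas $C_{\alpha,b_1}$ depends only on $\alpha,b_1,f(\ta),V_\alpha$ and not on $\ta$. Your assertion that ``optimizing the resulting quadratic in $\lambda$ produces the explicit constant $C_{\alpha,b_1}$'' is therefore incorrect as stated; the two expressions are structurally different. To obtain the announced constant you would have to reproduce the effect of the paper's change of basis, which is precisely what absorbs the linear coupling $-b_n\frac{\ta f(\ta)}{1-\alpha}(\btn-\ta)$ into the new coordinate and replaces it, in the squared recursion, by a term whose leading size is governed by $\E[(\tZnd)^2]\sim \alpha(1-\alpha)/(f(\ta)^2 n)$ alone. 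For case~(i) your outline is plausible, but the step ``propagates through the iteration to a contribution of order $n^{-(b+1)/2}$'' hides a nontrivial recursion lemma (the paper's Lemma~\ref{lem:recurrence_finale_2}); you should state and prove such a lemma rather than invoke it informally.
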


Theorem \ref{theo:rate_vtn} provides some statistical guarantees on the behaviour of $(\bvtn)_{n \geq 1}$ in a finite horizon $n$ while our assumptions on $f$ are very weak: we only assume the smoothness of $f$ with $f(\ta)>0$. The second assumption seems necessary otherwise the distribution $\mathbb{P}$ is flat around $\ta$ and the definition of the quantile and of the super-quantile may be ambiguous.

We also state a sharp asymptotic analysis of the sequence $(\bvtn)_{n \geq 1}$ and we derive a central limit theorem with an explicit computation of the limiting variance.

\begin{thm}\label{theo:tcl}
Assume $\mathbf{H}_f$  and $\mathbf{H}_{(a_n,b_n)}$ .
\begin{itemize}
\item[$i)$] If $b \in (1/2,1)$,
 then:
$$
 \sqrt{b_n^{-1}}\left(\bvtn-\vta \right) \underset{n \to +\infty}{\overset{\mathcal{L}}{\Longrightarrow}} \mathcal{N}\left(0, \frac{V_\alpha }{2(1-\alpha)^2}\right).
$$
\item[$ii)$] If $b=1$,
and $b_1 > 1/2$ then:
$$
\sqrt{n} \begin{pmatrix}
\btn-\ta\\\bvtn-\vta 
\end{pmatrix}\underset{n \to +\infty}{\overset{\mathcal{L}}{\Longrightarrow}} \mathcal{N}(0,S^2)$$
where:
$$
S^2 = \begin{pmatrix}
\frac{\alpha(1-\alpha)}{f(\ta)^2} & \frac{\alpha}{f(\ta)}(\vta-\ta) \\ \frac{\alpha}{f(\ta)}(\vta-\ta) & \frac{b_1^2}{(2b_1-1)} \frac{V_\alpha}{(1-\alpha)^2}-\frac{2b_1}{2b_1-1} \frac{\alpha \ta(\vta-\ta)}{(1-\alpha)}
\end{pmatrix}.
$$
\end{itemize}
\end{thm}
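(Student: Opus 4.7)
The strategy is to linearize the recursion for $\bvtn - \vta$ and to conclude with a martingale central limit theorem. Setting $\Fn = \sigma(X_1, \dots, X_n)$, the one-step update can be decomposed as
\begin{equation*}
\bvtnp - \vta = (1-b_n)(\bvtn - \vta) + b_n \Delta M_{n+1} + b_n r_n,
\end{equation*}
where $\Delta M_{n+1} := \tfrac{X_{n+1}}{1-\alpha}\ind_{X_{n+1} > \ta} - \vta$ is an $\Fn$-martingale increment of conditional variance $V_\alpha/(1-\alpha)^2$, and
\begin{equation*}
r_n := \tfrac{X_{n+1}}{1-\alpha}\bigl(\ind_{X_{n+1} > \btn} - \ind_{X_{n+1} > \ta}\bigr)
\end{equation*}
captures the effect of plugging in the Cesaro average $\btn$ instead of $\ta$. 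A first order Taylor expansion using $\mathbf{H}_f$ provides $\E[r_n \mid \Fn] = -\tfrac{\ta f(\ta)}{1-\alpha}(\btn - \ta) + O((\btn - \ta)^2)$. Iterating the affine recursion yields the explicit representation
\begin{equation*}
\bvtn - \vta = \pi_{n_0,n}\bigl(\widehat{\vartheta}_{n_0} - \vta\bigr) + \sum_{k=n_0}^{n-1} b_k \pi_{k+1,n}\bigl(\Delta M_{k+1} + r_k\bigr), \qquad \pi_{k,n} := \prod_{j=k}^{n-1}(1-b_j),
\end{equation*}
which is the common starting point for both parts of the theorem.

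For part $(i)$, with $b \in (1/2,1)$, the normalization $\sqrt{1/b_n} \asymp n^{b/2}$ is strictly slower than $\sqrt n$, hence the bias contribution is negligible at that scale: Theorem \ref{theo:rate_averaging} yields $\|\btn - \ta\|_{L^2} = O(n^{-1/2})$, and $\sum_{k} b_k \pi_{k+1,n}\|\bar\theta_k - \ta\|_{L^2} = O(n^{-1/2}) = o(n^{-b/2})$. The principal contribution is then the martingale sum; its Lindeberg condition follows from the moment assumption in $\mathbf{H}_f$, while its predictable quadratic variation $\sum_{k} b_k^2 \pi_{k+1,n}^2$ is of order $b_n/2$, the factor $1/2$ stemming from the ODE relaxation identity $\int_0^{\infty}e^{-2t}\,\mathrm{d}t = 1/2$. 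The classical martingale CLT then produces the announced Gaussian limit with variance $V_\alpha/(2(1-\alpha)^2)$.

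For part $(ii)$, $b_n = b_1/n$ implies $\pi_{k,n} \sim (k/n)^{b_1}$ and the normalization $\sqrt n$ puts the martingale and the bias contributions on equal footing. The plan is to couple the above representation of $\bvtn - \vta$ with the Ruppert-Polyak asymptotic linearization of the averaged quantile algorithm,
\begin{equation*}
\btn - \ta = -\frac{1}{n f(\ta)}\sum_{k=1}^n (\ind_{X_k \le \ta} - \alpha) + o_P(n^{-1/2}),
\end{equation*}
so that, up to a $o_P(1)$ remainder, both coordinates of $\sqrt n(\btn - \ta,\ \bvtn - \vta)$ become deterministic linear functionals of the same bivariate martingale sum constructed from $(\ind_{X_k \le \ta} - \alpha,\ \Delta M_k)$. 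A multivariate martingale CLT then delivers the joint Gaussian limit, and the entries of $S^2$ are read off from the deterministic limits $b_1^2 n^{-2b_1}\sum_{k} k^{2b_1-2} \to b_1^2/(2b_1-1)$ and $\sum_{k} b_k \pi_{k+1,n} \to 1$. In particular, the off-diagonal term $\tfrac{\alpha}{f(\ta)}(\vta - \ta)$ arises as the sum of $+\alpha \vta/f(\ta)$ coming from the elementary covariance $\mathrm{Cov}(\ind_{X \le \ta}, \tfrac{X}{1-\alpha}\ind_{X > \ta}) = -\alpha \vta$, and of $-\alpha \ta/f(\ta)$ coming from the propagation of the quantile fluctuations through the bias $-\tfrac{\ta f(\ta)}{1-\alpha}(\bar\theta_k - \ta)$ inside $r_k$.

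The main obstacle lies in the simultaneous control, at the critical scale $o_P(n^{-1/2})$, of three sources of error in part $(ii)$: the second-order Taylor residual $O((\btn - \ta)^2)$, the centered fluctuations of $r_n - \E[r_n \mid \Fn]$ (whose conditional variance is itself of order $|\btn - \ta|$), and the Ruppert-Polyak remainder for the averaged quantile. Each is absorbed through the $L^p$ estimate $\MM_{n,2p} \le c_p n^{-p}$ of Theorem \ref{theo:rate_averaging}$(ii)$ applied with $p = 2$; this explains precisely why assertion $(ii)$ of that theorem was stated for general integer $p$ rather than only $p = 1$. Once these remainders are under control, the verification of the Lindeberg condition and the convergence of the predictable quadratic covariation for the bivariate martingale reduce to routine moment computations under $\mathbf{H}_f$.
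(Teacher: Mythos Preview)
Your approach is sound and genuinely different from the paper's. The paper does not use a direct martingale CLT on the iterated product representation; instead it builds a \emph{diffusion approximation}. Concretely, it defines the rescaled pair $\Tzn=(\tTn,\Vtn)$ with $\tTn=\sqrt{n}\,\tZn^{(2)}$ (the spectrally reduced quantile fluctuation from Section~\ref{sec:averaging_quantile}) and $\Vtn=\sqrt{1/b_n}\,(\bvtn-\vta)$, embeds the discrete sequence into continuous time via a shifted interpolated process $\Tztn_t$, proves tightness of $(\Tztn)_{n\ge1}$ (Proposition~\ref{prop:tightness}), and identifies any weak limit as the solution of the martingale problem for the Ornstein--Uhlenbeck generator $\mathcal{G}$ of~\eqref{eq:generateur}. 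The covariance $S^2$ is then read off by applying $\int\mathcal{G}\varphi\,\mathrm{d}\nu_\infty=0$ to the test functions $\varphi=x^2/2,\ xy,\ y^2/2$, and Slutsky recovers the original coordinates. Part~$(i)$ is treated analogously with the one-dimensional generator~\eqref{eq:generateur_bis}.

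The trade-offs: your direct method is more elementary and avoids the tightness/diffusion machinery; it also makes the decomposition of each covariance entry into ``martingale'' and ``bias'' contributions transparent. The paper's route, by contrast, handles the joint dynamics in one shot and makes the computation of $S^2$ essentially mechanical once $\mathcal{G}$ is identified---in particular it bypasses the double-sum bookkeeping that your part~$(ii)$ hides behind the sentence ``both coordinates become deterministic linear functionals of the same bivariate martingale sum''. That is where your outline is thinnest: after substituting the Ruppert--Polyak linearization into the bias term $-\tfrac{\ta f(\ta)}{1-\alpha}\sum_k b_k\pi_{k+1,n}(\bar\theta_k-\ta)$ and interchanging summations, the resulting weights on $\ind_{X_j\le\ta}-\alpha$ depend on $(j,n)$ in a nontrivial way, and the contribution $-\tfrac{2b_1}{2b_1-1}\tfrac{\alpha\ta(\vta-\ta)}{1-\alpha}$ to $S^2_{22}$ emerges only after combining the variance of that bias term with two cross-covariances. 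This can certainly be carried out, but the paper's generator computation sidesteps it entirely.
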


\subsection*{Discussion}
We complete below our previous discussion regarding the results we obtained in Theorem \ref{theo:rate_vtn} and Theorem \ref{theo:tcl}.

\noindent
\textit{Low computational cost.}

\noindent
It is well known that on-line methods lead to very simple and fast ways to compute estimators comparing for example to estimators based on order statistics or plug-in. Hence, it is somewhat classical but however important, to remind how fast our algorithm is to manage the estimation of $\vta$ recursively. Moreover, our method may be adapted to a stream of observations, contrary to batch methods.

\noindent
\textit{Non-asymptotic result.}

\noindent
Gathering $i)$ of  Theorem \ref{theo:rate_vtn} and $i)$  of Theorem \ref{theo:tcl}, we observe that our upper bound $i)$ cannot be improved as it involves the lowest possible constant.
Regarding now $ii)$ of  Theorem \ref{theo:rate_vtn} and $ii)$ of Theorem \ref{theo:tcl},  we observe that the rate of convergence in Theorem \ref{theo:rate_vtn} $ii)$ is of the order $1/n$ and is optimal. Hence, we obtain with our sequential method a parametric rate of estimation of $\vta$. 
Unfortunately,  we emphasize that $C_{\alpha,b_1}$ given in $ii)$ of Theorem \ref{theo:rate_vtn}  does not match with the limiting variance obtained in Theorem \ref{theo:tcl} so that the constant involved in $ii)$ of Theorem \ref{theo:rate_vtn} may certainly be improved.
Comparing the results $i)$ and $ii)$ of Theorems \ref{theo:rate_vtn} and \ref{theo:tcl}, we finally observe that from an asymptotic point of view, the choice $b_n = b_1/(n+1)$ always outperform the other one.

To the best of our knowledge, estimating the super-quantile $\vta$ with a recursive method has only been  addressed by  \cite{bardou2009computing} with the help of a Cesaro averaging procedure on the initial sequence $(\vtn)_{n \geq 1}$ and in \cite{BCG1} where some asymptotic results are established for another method that produces a sequence different from $(\bvtn)_{n \geq 1}$. The results in \cite{bardou2009computing} and in \cite{BCG1} are somewhat weaker than that of Theorem \ref{theo:rate_vtn} since with the same set of assumptions, they only derive asymptotic results  instead of a non-asymptotic bound on the mean square error. 
Our contribution stated in Theorem \ref{theo:rate_vtn} is significantly stronger: we obtain a non-asymptotic upper bound of the performance of our estimator, which possesses the good convergence rate $O(b_n)$ with a sequential strategy.

\noindent
\textit{Limiting variance and comparison with existing results}

\noindent
Regarding now the asymptotic properties, there exists roughly speaking three axes of work in the literature. The first family of work is concerned by parametric or semi-parametric approaches: some parametric models are considered for the sequence of observations and then a plug-in parametric estimation is used with either a direct formula (if available for $\vta$) or a Monte-Carlo integral approximation. It is the point of view adopted in \cite{Chernozhukov}, \cite{Hall-Yao} or \cite{Wang-Zhao} for example. Although efficient from a numerical point of view, these approaches suffer from robustness problems essentially due to some questionable parametrization. 
Consequently, even if some of these previous works also establish central limit theorem with explicit limiting variance of estimation, they cannot be directly compared to our result stated in Theorem \ref{theo:tcl}.

Oppositely, nonparametric (or historical) $\Var$ or $\CVar$ estimation is a robust alternative over the (semi-)parametric approaches and among these works, we can mention batch methods and on-line ones that can manage a stream of observations.

$\bullet$
Concerning the batch approaches, they  are commonly based either on order statistics, kernel smoothing or on the variational description of $\CVar$. 
For example, in \cite{Garivier2}, the authors derive a central limit theorem for the estimation of the super-quantile using the standard approach with order statistics and an assumption on the behaviour of the quantile function that entails an assumption on the tail of the distribution of the random variable $X$, which yields a non-adaptive method regarding the tail parameter of the distribution.
In \cite{Beutner}, the authors use a plug-in method to derive a central limit theorem, with the help of an ad hoc functional delta method. In the same way, \cite{Chernozhukov} also uses a variational approach of $\CVar$ to derive an asymptotic functional central limit theorem for the estimation of $\vta$ whose limiting variance depends on the tail index used in their model.
In all these works, the variances are difficult to compare with the one stated in Theorem \ref{theo:tcl}, at least from a theoretical point of view, while we emphasize that these methods are computationally longer than our on-line method. We also point out that in our work, we do not need any very restrictive assumption on the tail of the distribution of $X$.

$\bullet$ Therefore, it seems that the meaningful methods we have to compare with are those of \cite{BCG1}, \cite{bardou2009computing} and \cite{Garivier2}.
In \cite{Garivier2}, the superquantile is estimated using a Monte-Carlo method and some order statistics of the distribution. The authors obtain a central limit theorem (Theorem 3.1) and the limiting variance whose expression is difficult to compare with our because of the lack of explicit computations. More interestingly, \cite{BCG1} states a central limit theorem for the algorithms $(\vtn)$ and $(\widetilde{\vartheta}_n)$ (see Theorem 3.4). We obtain that both $\sqrt{n^{b}}(\vtn-\vta)$ and $\sqrt{n^{b}}(\widetilde{\vartheta}_n-\vta)$ converge to a Gaussian random variable with variance 
\begin{equation*}
\Gamma_{\vta}= \left \{
\begin{array}[c]{ccc}
{\displaystyle \frac{b_1^2 \tau^2_{\alpha}}{2b_1 - 1}}  & \text{if} & b=1, \vspace{1ex} \\
{\displaystyle \frac{b_1 \tau^2_{\alpha}}{2}} & \text{if} & b<1,
\end{array}
\right. \quad\text{where}  \quad
\tau_\alpha^2=\frac{V_\alpha}{(1-\alpha)^2} - \Bigl(\frac{\alpha\ta}{1-\alpha}\Bigr)(2\vta-\ta).
\end{equation*} 
In the case where $b<1$ and $\vta$ and $\ta$ are positive, the asymptotic variance obtained in Theorem \ref{theo:tcl} $ii)$ is larger than $\Gamma_{\vta}$.\\
The discussion in the case where $b=1$ is more interesting. For a fixed value of $b_1$ we observe that $$S^2_{22}< \Gamma_{\vta} \quad  \Longleftrightarrow \quad b_1<\frac{2\vta-2\ta}{2\vta-\ta} = 1-\frac{\ta}{2\vta-\ta}.$$
Therefore if the distribution $f$ satisfies that $\frac{\vta}{\ta}>\frac{3}{2}$, there exists $b_1$ such that the asymptotic variance of $\bvtn$ is smaller that the one of $\vtn$ and $\widetilde{\vartheta}_n$.\\
Now for a given distribution, the question remains how to find a value of $b_1$ that minimizes $S^2_{22}$ and $\Gamma_{\vta}$.
The minimal $\Gamma_{\vta}$ is equal to $\tau^2_\alpha$ and is reached for $b_1=1$. This value corresponds to the limiting variance of the central limit theorem for the averaged version of $\widetilde{\vartheta}_n$ stated in \cite{bardou2009computing} (Theorem 2.4). The variance $S^2_{22}$ admits a minimum for some $b_1\in(1/2,1)$. However the comparison of this minimal value with $\tau_\alpha^2$ is difficult to handle for a generic distribution. We nevertheless point out that our new algorithm $(\bvtn)_{n \ge 1}$ shall improve the limiting variance of estimation  in comparison with the one of $(\vtn)_{n \ge 1}$ or $(\widetilde{\vartheta}_n)_{n \ge 1}$ when $\vta$ is significantly larger than $\ta$, which is a common feature with heavy-tail distributions like Student, Weibull or Pareto ones. This last remark may be in particular useful for possible applications in finance and actuary.  In these both fields, CVaR estimation is at the cornerstone for designing optimal policies either for contracting or edging. 
We emphasize that in finance, it is commonly admitted that asset return prices may possess heavy tails (see \textit{e.g.} \cite{Mandelbrot,Rachev}) whereas actuarial scientists generally face optimal reinsurance problems with fat tails (see \textit{e.g.} \cite{Balbas,Delbaen}).

\subsection*{Organisation of the paper and notations}
As indicated above, the main goal of this paper is to prove Theorem \ref{theo:rate_vtn} and Theorem \ref{theo:tcl}. To do so, we will also establish some  results on the stochastic gradient descent $(\tn)_{n \geq 1}$ involved in the quantile estimation, and on the averaged sequence $(\btn)_{n \geq 1}$ for the quantile estimation itself. Despite their own interests, these results are necessary to study $(\bvtn)_{n \geq 1}$ and to the best of our knowledge, are new for the recursive quantile algorithm $(\tn)_{n \geq 1}$ and in particular give a state-of-the art result for the sequence $(\bvtn)_{n \geq 1}$ when compared to known results stated in \cite{Duflo97,Cenac,godichon2015}  (among others).  

The paper is organized as follows.
Section \ref{sec:quantile} presents a careful study of the $L^p$ loss of the recursive quantile. The results derive from a strategy based on a family of Lyapunov functions $V_q$ (see Equation \ref{def:Vq}). In particular, we establish Theorem \ref{theo:momentp} that states that $\mathbb{E}[(\tn-\ta)^{2p}]$ is less than $a_n^p$ up to a constant that depends on $p$. In Section \ref{sec:averaging_quantile}, we prove an optimal non-asymptotic result for $(\btn)_{n \geq 1}$. Our proof relies on a spectral analysis of the algorithm, close to the strategy developed in \cite{Gadat-Panloup}.\\
Section \ref{sec:superquantile} then uses Theorem \ref{theo:rate_averaging} to derive the main result of the paper on the super-quantile estimation proof of Theorem \ref{theo:rate_vtn} and  Section \ref{sec:tcl} describes the central limit theorem with the help of an approximation of the rescaled stochastic algorithm by Markov diffusion processes.

\medskip

Below, we will use $a.s.$ to refer to an almost sure convergence result. 
For two positive sequences $(t_n)_{n \geq 1}$ and $(s_n)_{n \geq 1}$, $t_n \lesssim s_n$ refers to an inequality true for any value of $n$ up to a constant $C$ independent of $n$: $t_n \leq C s_n$. 
\\In the sequel we will always denote $(\Var,\CVar)$ by $(\ta,\vta)$.

\section{Quantile estimation}\label{sec:quantile}
This paragraph deals with the statistical estimation of $\ta$ with the sequence $(\tn)_{n \geq 1}$ and the Cesaro averaging procedure defined by
$
(\btn)_{n \geq 1}.$
\subsection{Convergence of the quantile algorithm $(\tn)_{n \geq 1}$}
We first state a standard   almost sure convergence result, established  with the help of the Robbins-Monro Theorem.
\begin{thm}[Robbins-Monro Theorem - Almost sure convergence of $(\tn)_{n \geq 1}$]\label{theo:robbinsmonro} Assume that  $a_n = a_1 n^{-a}$ with $a \in [1/2,1]$ and $a_1>0$, then the sequence $(\tn)_{n \geq 1}$ satisfies
$$
\tn \xrightarrow{ n \longrightarrow +\infty} \ta \qquad a.s.
$$
\end{thm}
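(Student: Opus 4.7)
The plan is to recognize this as a standard Robbins-Monro scheme for locating the unique zero $\ta$ of the nondecreasing continuous map $h(\theta):=F(\theta)-\alpha$. I would set $\Fn:=\sigma(X_1,\ldots,X_n)$ and define the bounded centred increment $\eta_{n+1}:=\ind_{X_{n+1}\leq\tn}-F(\tn)$, so that the recursion rewrites as the drift plus martingale noise
$$
\tnp-\ta=(\tn-\ta)-a_n\,h(\tn)-a_n\,\eta_{n+1},\qquad \E[\eta_{n+1}\mid\Fn]=0,\quad |\eta_{n+1}|\leq 1.
$$

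Taking $V(\theta)=(\theta-\ta)^2$ as a Lyapunov function, squaring the recursion and conditioning on $\Fn$ (using that $\eta_{n+1}$ is centred and uniformly bounded) I would obtain an inequality of the form
$$
\E\bigl[(\tnp-\ta)^2\mid\Fn\bigr]\leq (\tn-\ta)^2-2a_n(\tn-\ta)h(\tn)+C\,a_n^2,
$$
for some absolute constant $C$, with the crucial property $(\tn-\ta)h(\tn)\geq 0$ coming from the monotonicity of $F$. Since $a>1/2$ yields $\sum_n a_n^2<\infty$, the Robbins-Siegmund lemma then produces a nonnegative random variable $V_\infty$ such that $(\tn-\ta)^2\to V_\infty$ almost surely and $\sum_n a_n(\tn-\ta)h(\tn)<\infty$ a.s.

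The final step is to identify $V_\infty=0$. Since $|\tnp-\tn|\leq a_n\to 0$ and $(\tn-\ta)^2$ converges a.s., the sequence $(\tn)$ itself converges a.s.\ to some $\theta_\infty\in\{\ta-\sqrt{V_\infty},\ta+\sqrt{V_\infty}\}$. Assume, towards a contradiction, that $V_\infty>0$ on a set of positive probability: then $\theta_\infty\neq\ta$. Because $\mathbf{H}_f$ ensures that $f$ is continuous with $f(\ta)>0$, the cdf $F$ is strictly increasing on a neighborhood of $\ta$, so $F(\theta_\infty)\neq\alpha$ and $(\theta_\infty-\ta)h(\theta_\infty)>0$. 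By continuity of $h$, $(\tn-\ta)h(\tn)$ converges to this positive limit, which combined with $\sum_n a_n=\infty$ contradicts the summability $\sum_n a_n(\tn-\ta)h(\tn)<\infty$. Hence $V_\infty=0$ and $\tn\to\ta$ a.s.

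The only subtle point is the last paragraph: selecting the limit among $\ta\pm\sqrt{V_\infty}$ (which is automatic from $a_n\to 0$) and invoking the uniqueness of the quantile under $\mathbf{H}_f$, without which the contradiction would fail at points where $F$ is locally constant.
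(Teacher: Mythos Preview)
The paper does not actually prove this statement: it simply cites \cite{Duflo97} for the classical Robbins--Monro theorem and moves on. Your argument is the standard Robbins--Siegmund route (Lyapunov $V(\theta)=(\theta-\ta)^2$, bounded martingale increment, monotonicity of $F$ yielding $(\tn-\ta)h(\tn)\geq 0$, then identification of the limit via $\sum_n a_n=\infty$). It is correct and is precisely the kind of proof one finds in \cite{Duflo97}, so there is nothing to compare methodologically.

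One boundary point deserves a remark. Your use of Robbins--Siegmund requires $\sum_n a_n^2<\infty$, i.e.\ $a>1/2$, whereas the statement as written allows $a\in[1/2,1]$. At $a=1/2$ one has $a_n^2\sim a_1^2/n$ and the series diverges, so the argument as given does not cover this endpoint. This is not a defect of your reasoning but of the range claimed in the statement; the paper itself later observes (Theorem~\ref{theo:momentp} and the discussion following it) that the constraint $\sum_n a_n^2<\infty$ can be dispensed with for the quantile algorithm because the noise is bounded, invoking instead the Lyapunov family $V_q$ or the results of \cite{metivier1987theoremes,benaim1999dynamics}. If you want your proof to match the stated range exactly, you would need to borrow that refinement; otherwise your argument is complete for $a\in(1/2,1]$.
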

\noindent
We refer to \cite{Duflo97} for a proof of this standard result.
The major drawback of Theorem \ref{theo:robbinsmonro}  is the lack of quantitative information about the rate of convergence. We are generally interested in a result of the form
$$ \E[(\tn-\ta)^{2q}] \leq r_{n,q},$$ where $r_{n,q}$ is referred to as the convergence rate of the algorithm. We emphasize that these bounds are more meaningful than a simpler almost sure convergence result because they make it possible to draw some quantitative comparisons among algorithms (in particular when looking at the rates of convergence and at the first order terms).
Deriving a such bound is at the cornerstone of our forthcoming study of $(\btn)_{n \geq 1}$.
\begin{thm}
\label{theo:momentp} Assume $\mathbf{H}_f$, if $a_n=a_1 n^{-a}$ with $a\in(0,1)$, then 
a large enough constant $C$ exists such that:
$$
\M_{n,4} := \E |\tn-\ta|^4 \leq C n^{-2 a}.
$$
More generally, for any integer $q \geq 1$, we have
$$
\exists K_q \geq 0 \quad \forall n \in \mathbb{N} \qquad \M_{n,2 q} =\E |\tn-\ta|^{2q}\leq K_q a_n^{q}.
$$
In particular, we can take:
$$
K_1 = \frac{16 c_1}{m} \qquad \text{and} \qquad K_2 = 8 \frac{c_2+\frac{8 c_2 c_1}{m} + e^{\frac{4 a_1^2 c_1}{2a-1}} a_1 }{m}
$$
where $c_1,c_2$ and $m$ are given in Lemma \ref{lem:prop_phi}.
\end{thm}
We emphasize that our result holds for $a  \in (0,1)$, instead of the classical assumption $a \in (1/2,1)$ in the Robbins-Monro Theorem (see for example the classical result on the \textit{dosage} in Theorem  1.4.26 and Proposition 1.4.27 of \cite{Duflo97}). 
We prove here that the common constraint $\sum_{n \ge 1} a_n^2 < +\infty$ is useless in the case of the recursive quantile algorithm to obtain a convergence result. This important and unusual result holds because of the boundedness of the noise from one iteration to another and is obtained with a subtle strong-convexification of the loss function with the help of the family of functions $V_q$. To the best of our knowledge, such improvement has only been observed  in the case of bounded noise in a general result of M\'etivier and Priouret (see \textit{e.g.} Theorem 1 of \cite{metivier1987theoremes}), 
 and was also stated in a simpler way in  Proposition 4.2 of \cite{benaim1999dynamics}. We emphasize that these works only state   a.s. asymptotic results.
Said differently, for the dosage issue and other companion problems (such as the mediane estimation for example), the assumption $a>1/2$ used in \cite{Duflo97,godichon2015} to assess convergence results on $(\tn)_{n \ge 1}$ is useless.

Second, we also observe that our upper bound is degraded according to $f(\ta) \longrightarrow 0$, which translates a flat area of the cumulative distribution function around $\ta$. Such a phenomenon is absolutely coherent with the CLT (not show here) known for the dosage problem.

\begin{proof}[Proof of Theorem \ref{theo:momentp}]
We start with the linearization of $(\tn)_{n\ge1}$. We define \begin{equation}\label{def:Phi}
\Phi(\theta) :=  \int_{\ta}^\theta \int_{\ta}^u f(s) \text{d} s \text{d}u.
\end{equation}
The function $\Phi$ is associated to the gradient descent involved by Equation \eqref{eq:algo}.
We point out that $\Phi \geq 0$ and satisfies:
\begin{equation}\label{eq:Phi-quadra}
\forall \theta \in \R \qquad 0 \leq \Phi(\theta) \leq \frac{\|f\|_{\infty}}{2} (\theta-\ta)^2.
\end{equation}
We shall remark that:
$$\begin{aligned}
\tnp &= \tn - a_{n} (F(\tn)-F(\ta)) + a_{n} \Delta M_{n+1}\\
& = \tn - a_{n} \Phi'(\tn) + a_{n} \Delta M_{n+1}, 
\end{aligned}$$
where the martingale increment is defined in by:
\begin{equation}
\label{eq:mart_inc}\Delta M_{n+1}:=F(\tn)-\mathds{1}_{X_{n+1}\le \tn}.
\end{equation}

We now follow the roadmap of \cite{Gadat-Panloup} and define the Lyapunov function:
\begin{equation}\label{def:Vq}
V_q(\theta) := \Phi(\theta)^q \exp(\Phi(\theta)), \quad \forall q\in\N.
\end{equation}
We first state some important properties of $V_q$, whose proofs are postponed to  Appendix \ref{app:tech_tn}.
\begin{lem}
\label{lem:prop_phi}
\begin{itemize}
\item[$i)$] A constant $m>0$ exists such that for any $\theta\in \R$ and $q\ge1:$
$$\Phi'(\theta)V_q'(\theta)\ge m V_q(\theta) \quad \text{with} \quad 
m = f(\ta) \left( \frac{f(\ta)}{4 \|f'\|_{\infty}^2} \wedge \frac{3 q}{8}\right)$$
\item [$ii)$] A constant $c_q>0$ that only depends on $q$ and $f$ exists such that:
\begin{equation*}
|V_q''(\theta)| \le  c_q(V_{q}(\theta)+V_{q-1}(\theta)).
\end{equation*}
where  
$C_{\alpha}= 9 f(\ta) \vee 4 \frac{\|f'\|_{\infty}^2}{f^3(\ta)}$ and  $c_q=1+q \|f\|_{\infty} + q[(q-1) \vee 2] C_{\alpha}$.

\item[$iii)$] For any $q\ge1$, a positive constant $C_q$ exists such that for any $\theta\in\R$
$$(\theta-\ta)^{2q}\le C_q \left(V_q(\theta)+V_{2q}(\theta)\right) \quad \text{where} \quad  C_q=\frac{4^q}{f(\ta)^q}  \wedge \frac{4^{2q} \|f'\|_{\infty}^{2q}}{f(\ta)^{4 q}}.$$
\end{itemize}
\end{lem}

Our strategy of proof relies on a recursive contraction from $V_q(\tn)$ to $ V_q(\tnp)$ using a Taylor expansion of $V_q(\tnp)$:
\begin{align*}
V_q(\tnp) &= V_q\left(\tn-a_{n} \Phi'(\tn)+a_{n} \Delta M_{n+1} \right) \\
& = V_q(\tn) - a_n\left( \Phi'(\tn) -  \Delta M_{n+1}\right) V_q'(\tn) + a_n^2\frac{(\Phi'(\tn)- \Delta M_{n+1})^2}{2} V_q''(\xi_{n+1}),
\end{align*}
where 
\begin{equation}\label{eq:def_xi}
\xi_{n+1} = \tn + \ell_{n+1} a_{n} \left(- \Phi'(\tn)+\Delta M_{n+1}\right),
\end{equation}
with $\ell_{n+1}$ is a random variable in $[0,1]$. \\
We first consider the drift term. Since $\Delta M_{n+1}$ is a martingale increment:
$$
\E \left[a_n(\Phi'(\tn)- \Delta M_{n+1})V'_q(\tn) \, \vert \mathcal{F}_n\right]  =  a_{n} V'_q(\tn)\Phi'(\tn),
$$
and Lemma \ref{lem:prop_phi} $i)$ yields
\begin{align}
\label{eq:drift_minoration}
\E \left[a_n(\Phi'(\tn)- \Delta M_{n+1})V'_q(\tn) \, \vert \mathcal{F}_n\right]>ma_nV_q(\tn).
\end{align}
We now pay a specific attention to the second order term. Lemma \ref{lem:prop_phi} $ii)$ yields:
\begin{align}
|V_q''(\xi_{n+1})| \le  c_q(V_{q}(\xi_{n+1})+V_{q-1}(\xi_{n+1})).
\label{eq:V''}
\end{align}
We are thus led to upper bound $\Phi(\xi_{n+1})$. Using a Taylor expansion near $\tn$, we obtain:
\begin{align*}
\lefteqn{\Phi(\xi_{n+1}) = \Phi\left(\tn+\ell_{n+1} a_{n} \left(\Delta M_{n+1}- \Phi'(\tn)\right)\right)}\nonumber\\
& \leq \Phi(\tn) + \Phi'(\tn) \ell_{n+1} a_{n} \left(\Delta M_{n+1}- \Phi'(\tn)\right) + \frac{\ell_{n+1}^2 a_{n}^2 \left(\Delta M_{n+1}- \Phi'(\tn)\right)^2}{2} \|\Phi''\|_{\infty}.
\end{align*}
We recall that $\ell_{n+1}$ is a random variable in $[0,1]$ and that  $\Phi'(\tn)$ and $ \Delta M_{n+1}$ are uniformly bounded by $1$. We thus deduce that:
\begin{equation*}
\Phi(\xi_{n+1})
\leq \Phi(\tn) +  2a_{n} +  2a_{n} ^2 \|f\|_{\infty} ,
\end{equation*}
which leads to, for a $n$ large enough:
\begin{equation*}
\Phi(\xi_{n+1})
\leq \Phi(\tn) +  3a_{n}.
\end{equation*}
Changing the constant $c_q$ from line to line, we are led to:
\begin{align}
V_q(\xi_{n+1})&\le \exp(\Phi(\tn))e^{3a_n}\left( \Phi(\tn) + 3a_{n} \right)^q\nonumber\\
&\le c_q\left(V_q(\tn)+a_{n}^q V_0(\tn))\right)\nonumber\\
&\le  c_q \left(V_q(\tn)+a_{n}^q (1+V_q(\tn))\right)\nonumber\\
&\le c_q\left(V_q(\tn)+a_{n}^q\right),
\label{eq:technique}
\end{align}
where we used that $\exp(\Phi(\tn))=V_0(\tn)\le C(1+V_q(\tn))$.\\
Combining Equations \eqref{eq:V''} and \eqref{eq:technique} we deduce that:
$$V_q''(\xi_{n+1})\le c_q\left( V_q(\tn)+V_{q-1}(\tn)+a_{n}^{q-1}\right).$$
Finally using again the fact that $\Phi'(\tn)$ and $ \Delta M_{n+1}$ are uniformly bounded, we obtain:
$$
\E \left(\frac{a_n^2( \Phi'(\tn)- \Delta M_{n+1})^2}{2} V_q''(\xi_{n+1}) \, \vert \F_n\right) \leq c_q a_{n}^2 \left( V_q(\tn)+V_{q-1}(\tn)+a_{n}^{q-1}\right).
$$
We conclude using \eqref{eq:drift_minoration} that:
\begin{align}
\E [ V_q(\tnp)] & = \E[\E[ V_q(\tnp) \, \vert \F_n] ] \nonumber\\
& \leq \E \left( V_q(\tn)\right) \left(1-a_{n} m  + c_q a_{n}^2 \right) + c_q a_{n}^2 \E \left( V_{q-1}(\tn)\right)  + c_q a_{n}^{q+1}. \label{eq:recurrence}
\end{align}
We deduce from the study of this recursive inequality given in Lemma \ref{lem:recursion_Vq} that for $n_0$ large enough
\begin{equation} \label{eq:Vqtn}
\forall q \geq 1, \quad \exists \, K_q \geq 0, \quad \forall n \ge n_0 \qquad \E [V_q(\tn)] \leq K_q \{a_{n}\}^q.
\end{equation}
In particular, we can choose
$$
K_1 = \frac{16 c_1}{m} \qquad \text{and} \qquad K_2 = 8 \frac{c_2+\frac{8 c_2 c_1}{m} + e^{A_1}{a_1}}{m}
$$
From Lemma \ref{lem:prop_phi}$iii)$ we have:
$$(\theta-\ta)^{2q}\le C_q \left(V_q(\theta)+V_{2q}(\theta)\right).$$
Taking the expectation at time $n$ and using Equation \eqref{eq:Vqtn}, we conclude that:
$$
\exists A_q \geq 0 \quad \forall n \in \mathbb{N} \qquad \mathcal{M}_{n,2q} \leq A_q a_n^q.
$$
\end{proof}
 
\subsection{Cesaro averaging}\label{sec:averaging_quantile}
 
This section is devoted to the proof of Theorem \ref{theo:rate_averaging} . The proof relies on a spectral analysis of the algorithm, close to the strategy developed in \cite{Gadat-Panloup}.

We introduce 
$Z_n = (\tn-\ta,\btn-\ta)$, which gives the joint evolution of the Robbins Monro algorithm and its averaged version.
\begin{prop}
\label{prop:lin_btn}
The sequence $(Z_n= (\tn-\ta,\btn-\ta)^T)_{n\ge0}$ satisfies:
\begin{equation}\label{eq:znpzn}
\Znp = A_n \Zn + a_{n} \left( \begin{matrix}
\Delta M_{n+1} \\ \frac{1}{n+1} \Delta M_{n+1}
\end{matrix} \right) + a_{n} \left( \begin{matrix}
 R_{n} \\ \frac{1}{n+1} R_{n}
\end{matrix} \right),
\end{equation}
where:
\begin{itemize}
\item[i)] $A_n$ translates the linearization of the algorithm around $(\ta,\ta)$ at step $n$:
$$
A_n = \left( \begin{matrix}
1 - a_{n} f(\ta) & 0 \\
\frac{1-a_n f(\ta)}{n+1} & 1-\frac{1}{n+1}
\end{matrix}
\right).
$$
\item[ii)] The martingale increment $\Delta M_{n+1}$ defined in \eqref{eq:mart_inc} satisfies:
$$
\left|\E \left[\{\Delta M_{n+1}\}^2\vert \Fn\right]-\alpha(1-\alpha)\right| \leq \|f\|_{\infty} |\ta-\tn|.
$$
\item[iii)] The rest term $R_n$ is an $\mathcal{F}_n$ measurable random variable such that:
\begin{equation}\label{eq:reste_1}
|R_{n} | \leq \frac{L}{2} |\tn-\ta|^{2}.
\end{equation}
\end{itemize}
\end{prop}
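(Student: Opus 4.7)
The plan is to obtain \eqref{eq:znpzn} by an explicit algebraic manipulation of the two-dimensional recursion, splitting the indicator function into a martingale increment plus a deterministic drift, then Taylor-expanding the drift about $\ta$. The three items i), ii), iii) will follow in parallel from this decomposition.

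First, I would rewrite the update of $\tn$. Using $F(\ta)=\alpha$, I decompose
\[
\un_{X_{n+1}\le \tn}-\alpha \;=\; \bigl(\un_{X_{n+1}\le \tn}-F(\tn)\bigr) + \bigl(F(\tn)-F(\ta)\bigr)
\;=\; -\Delta M_{n+1} + \bigl(F(\tn)-F(\ta)\bigr),
\]
with $\Delta M_{n+1}$ as in \eqref{eq:mart_inc}. A first-order Taylor expansion of $F$ at $\ta$ gives $F(\tn)-F(\ta)=f(\ta)(\tn-\ta)-R_n$, where I set $R_n := f(\ta)(\tn-\ta)-\bigl(F(\tn)-F(\ta)\bigr)$; since $f$ is $L$-Lipschitz under $\mathbf{H}_f$, the integral form of the remainder yields the bound \eqref{eq:reste_1}. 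Substituting back gives
\[
\tnp-\ta = (1-a_n f(\ta))(\tn-\ta) + a_n\Delta M_{n+1} + a_n R_n,
\]
which is exactly the first row of the matrix identity \eqref{eq:znpzn}.

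Next, I would handle the averaged component. Directly from the definition $\btnp=\frac{n}{n+1}\btn+\frac{1}{n+1}\tnp$ and subtracting $\ta$, I plug in the expression just obtained for $\tnp-\ta$:
\[
\btnp-\ta = \Bigl(1-\tfrac{1}{n+1}\Bigr)(\btn-\ta) + \tfrac{1-a_n f(\ta)}{n+1}(\tn-\ta) + \tfrac{a_n}{n+1}\Delta M_{n+1} + \tfrac{a_n}{n+1}R_n,
\]
which matches the second row of $A_n$ and the corresponding scaled noise and remainder terms. This establishes i) and iii) simultaneously.

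Finally, for ii), since $\un_{X_{n+1}\le \tn}$ conditional on $\Fn$ is Bernoulli with parameter $F(\tn)$, a direct computation gives $\E[\{\Delta M_{n+1}\}^2\mid\Fn]=F(\tn)(1-F(\tn))$. Setting $g(u)=u(1-u)$ on $[0,1]$, we have $|g'(u)|\le 1$, hence
\[
\bigl|F(\tn)(1-F(\tn))-\alpha(1-\alpha)\bigr| = |g(F(\tn))-g(F(\ta))| \le |F(\tn)-F(\ta)| \le \|f\|_\infty|\tn-\ta|,
\]
which is the claimed bound in ii). There is no real obstacle here: the argument is essentially a bookkeeping exercise once the martingale/drift split and the Taylor remainder estimate are in place. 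The only point demanding care is tracking the sign conventions for $R_n$ so that the remainder is added (rather than subtracted) in the final display, and making sure the Lipschitz constant $L$ provided by $\mathbf{H}_f$ is consistent with the factor $L/2$ appearing in \eqref{eq:reste_1}.
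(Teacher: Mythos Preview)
Your proposal is correct and follows essentially the same route as the paper: both split the indicator into the martingale increment $\Delta M_{n+1}$ plus the drift $F(\tn)-F(\ta)$, linearize the drift around $\ta$ with an integral remainder, and plug the result into the averaging recursion to read off the matrix $A_n$. Your treatment of ii) via the Lipschitz bound on $g(u)=u(1-u)$ is a slight repackaging of the paper's computation $F(\tn)(1-F(\tn))=\alpha(1-\alpha)+\int_{\ta}^{\tn} f(u)[1-2F(u)]\,du$, but it is the same estimate; the only cosmetic difference is that your $R_n$ carries the opposite sign convention from the paper's $R_n=\int_{\ta}^{\tn}[f(u)-f(\ta)]\,du$, which you correctly flag and which is immaterial since only $|R_n|$ is used.
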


\begin{proof}
\textbf{Proof of $i)$}: 
We write the two sequences $(\tn)_{n \geq 1}$ and $(\btn)_{n \geq 1}$ as
$$
(\tnp-\ta) = (\tn-\ta) - a_{n} [F(\tn)-F(\ta)] + a_{n} \Delta M_{n+1},
$$
and 
$$\begin{aligned}
\btnp &=\frac{n}{n+1}\btn+\frac{1}{n+1}\tnp\\
&=\left(1-\frac{1}{n+1}\right) \btn + \frac{1}{n+1}\Bigl(\tn - a_{n} (F(\tn)-F(\ta)) + a_{n}\Delta M_{n+1}\Bigr).
\end{aligned}$$
We then use a linear approximation of $F$ around $\ta$ and write that:
$$
F(\tn)-F(\ta) =f(\ta) (\tn-\ta) + \int_{\ta}^{\tn} [f(u) - f(\ta)] \text{d}u.
$$
Finally, we can write the joint evolution of $(\tn,\btn)_{n \geq 1}$ as a perturbed linear recursive sequence given by
\eqref{eq:znpzn}.
\\
\textbf{Proof of $ii)$}:
We study the variance of the martingale increment defined in \eqref{eq:mart_inc}:
\begin{align*}
\E \left[\{\Delta M_{n+1}\}^2\vert \Fn\right]  &= F(\tn)(1-F(\tn)) \\
& = \alpha (1-\alpha ) +    \int_{\ta}^{\tn} f(u) [1-2 F(u)] \text{d}u.
\end{align*}
We then deduce that:
$$
\left|\E \left[\{\Delta M_{n+1}\}^2\vert \Fn\right]-\alpha(1-\alpha)\right| \leq \|f\|_{\infty} |\ta-\tn|.
$$
\textbf{Proof of $iii)$}: The rest term is an $\mathcal{F}_n-$measurable random variable given by
$$R_{n}=\int_{\ta}^{\tn} [f(u) - f(\ta)] \text{d}u.$$
Using that $f$ is $L$-Lipschitz, the rest term $R_{n}$  satisfies:
$$
|R_{n} | \leq \frac{L}{2} |\tn-\ta|^{2}.
$$
\end{proof}

\begin{proof}[Proof of Theorem \ref{theo:rate_averaging} $i)$]
The scheme of the proof is to use the linearization to identify the negligible terms in the recursion. To do so, we diagonalize the matrix $A_n$ and perform a change of base.
The eigenvalues of $A_n$ are $ 1-a_{n} f(\ta)$ and $1-\frac{1}{n+1}$ and we verify that:
$$
A_n = \left( \begin{matrix}
1 & 0 \\
\epsilon_{n} & 1
\end{matrix} \right)
\left( \begin{matrix}
1 - a_{n} f(\ta) & 0 \\
0 & 1-\frac{1}{n+1}
\end{matrix}
\right)
\left( \begin{matrix}
1 & 0 \\
-\epsilon_{n} & 1
\end{matrix} \right) \quad \text{with} \quad \epsilon_{n}:=\frac{1-a_{n} f(\ta)}{1-a_{n} (n+1) f(\ta) }.
$$
We introduce
\begin{equation}\label{def:tZn}
\tZn = \left( \begin{matrix}
1 & 0 \\
-\epsilon_{n} & 1
\end{matrix} \right) \Zn = \left(
\begin{matrix}
\tn-\ta \\ - \epsilon_{n}(\tn-\ta)+(\btn-\ta),
\end{matrix}
\right),
\end{equation}
and we denote by $\tZnd$ the second component of $\tZn$. Equation \eqref{eq:znpzn} yields:
\begin{equation}
\label{eq:rec-tZn}
\tZnpd=\left(1-\frac{1}{n+1}\right)\tZnd+a_n\Delta M_{n+1}\left(\frac{1}{n+1}-\epsilon_{n+1}\right) +\mathcal{R}_n,\end{equation}
where:
\begin{equation}
\label{eq:restes}
\mathcal{R}_n=\omega_n(\tn-\ta)+\left(\frac{1}{n+1}-\epsilon_{n+1}\right)a_nR_n \quad \text{with} \quad \omega_n := (\epsilon_{n+1}-\epsilon_{n})(1-a_{n} f(\ta)).
\end{equation}
We now compute the quadratic expansion using the fact that $(\Delta M_{n+1})_{n \ge 1}$ is a sequence of martingale increments:
\begin{align}
\E (\{\tZnpd\}^2) =&\left(1-\frac{1}{n+1}\right)^2  \E (\{\tZnd\}^2)+a_{n}^2 \left( \epsilon_{n+1}-\frac{1}{n+1}\right)^2 \E( \{\Delta M_{n+1}\}^2) \nonumber\\
&+\E(\mathcal{R}_n^2)+2\left(1-\frac{1}{n+1}\right)\E(\mathcal{R}_n\tZnd).\label{eq:rec_tZnd}
\end{align}

\underline{Step 1 : First approximation of $\E (\{\tZn^{(2)}\}^2)$.}\\
We denote $u_n = \E (\{\tZn^{(2)}\}^2)$ and use Cauchy-Schwarz inequality on the last term to deduce that:
\begin{align*}
u_{n+1} \le &\left(1-\frac{1}{n+1}\right)^2  u_n+a_{n}^2 \left( \epsilon_{n+1}-\frac{1}{n+1}\right)^2 \E (\{\Delta M_{n+1}\}^2) \\
&+\E(\mathcal{R}_n^2)+2\left(1-\frac{1}{n+1}\right)\sqrt{u_n\E(\mathcal{R}_n^2)}.
\end{align*}

We study in Lemma \ref{lem:t1} and \ref{lem:t2} the behaviour of all the terms involved in the previous decomposition.  Lemma \ref{lem:t1} concerns the variance of the martingale increments from which a term of order $n^{-2}$ arises, while Lemma \ref{lem:t2} proves that the other terms are negligible. 

\begin{lem}\label{lem:t1} 
For every $\delta>0$, a constant $C_\delta$ exists such that for all $n$:
$$a_{n}^2 \left( \epsilon_{n+1}-\frac{1}{n+1}\right)^2 \E (\{\Delta M_{n+1}\}^2) \le
\frac{\alpha(1-\alpha)}{(n+1)^2 f(\ta)^2} + \frac{2\alpha (1-\alpha )a_1}{f(\ta)^3 n^{(3-a)}} +C (n^{-2-a/2}\vee n^{-(3-a)-\delta}) .$$
\end{lem}
\begin{lem}
\label{lem:t2}There exists a constant $C$ (independent from $n$) such that for all $n$:
$$\E(\mathcal{R}_n^2)\le C \left(n^{-(4-a)}\vee n^{-(2+2a)}\right).$$
\end{lem}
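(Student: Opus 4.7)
The plan is to split $\mathcal{R}_n$ into its two summands and treat each independently via the elementary bound $(x+y)^2 \le 2x^2+2y^2$. We then need asymptotic control on the auxiliary quantities $\epsilon_n-\tfrac{1}{n+1}$ and $\epsilon_{n+1}-\epsilon_n$, which we combine with the moment bounds $\E|\tn-\ta|^{2q}\lesssim a_n^q$ from Theorem \ref{theo:momentp}.

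The first auxiliary step computes, by reducing to a common denominator,
$$\epsilon_n-\frac{1}{n+1}=\frac{n}{(n+1)\bigl(1-a_n(n+1)f(\ta)\bigr)}.$$
Since $a<1$, we have $a_n(n+1)\sim a_1 n^{1-a}\to\infty$, whence $|\epsilon_n-\tfrac{1}{n+1}|\lesssim n^{a-1}$. A similar direct expansion (or a discrete mean-value estimate applied to the smooth function $x\mapsto x/(1-a_1 x^{1-a}(1+o(1))f(\ta))$) then yields
$$|\epsilon_{n+1}-\epsilon_n|\lesssim n^{a-2},\quad\text{hence}\quad |\omega_n|\lesssim n^{a-2},$$
using that $1-a_n f(\ta)$ is bounded.

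For the first term in $\mathcal{R}_n$, Theorem \ref{theo:momentp} with $q=1$ gives $\E(\tn-\ta)^2\lesssim a_n=n^{-a}$, so
$$\E\bigl[\omega_n^2(\tn-\ta)^2\bigr]\lesssim n^{2a-4}\cdot n^{-a}=n^{-(4-a)}.$$
For the second term, the bound $|\epsilon_{n+1}-\tfrac{1}{n+1}|\lesssim n^{a-1}$ together with $a_n\lesssim n^{-a}$ and the estimate $R_n^2\le (L^2/4)|\tn-\ta|^4$ from Proposition \ref{prop:lin_btn} $iii)$, combined with $\E|\tn-\ta|^4\lesssim a_n^2=n^{-2a}$ (Theorem \ref{theo:momentp} with $q=2$), yields
$$\E\Bigl[\Bigl(\tfrac{1}{n+1}-\epsilon_{n+1}\Bigr)^2 a_n^2 R_n^2\Bigr]\lesssim n^{2(a-1)}\cdot n^{-2a}\cdot n^{-2a}=n^{-(2+2a)}.$$
Summing the two contributions produces the claimed bound $\E(\mathcal{R}_n^2)\le C(n^{-(4-a)}\vee n^{-(2+2a)})$.

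The only genuine obstacle is bookkeeping around $\epsilon_n$: one must check that all estimates are uniform in $n$ (not merely asymptotic), which requires verifying that the denominator $1-a_n(n+1)f(\ta)$ is bounded away from zero for all but finitely many $n$ — a consequence of $a_n(n+1)\to\infty$ as $n\to\infty$ — and then absorbing the finitely many initial terms into the constant $C$. Once this is done, the telescoping-type estimate $|\epsilon_{n+1}-\epsilon_n|\lesssim n^{a-2}$ follows either from the explicit formula above by a one-line computation or, more systematically, from the identity $\epsilon_{n+1}-\epsilon_n=\int_n^{n+1}\partial_x \epsilon(x)\,dx$ applied to the smooth extension $\epsilon(x)$, whose derivative is manifestly of order $n^{a-2}$. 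No further difficulty arises, and the proof is complete.
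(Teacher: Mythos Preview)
Your proof is correct and follows essentially the same approach as the paper: split $\mathcal{R}_n$ via $(x+y)^2\le 2x^2+2y^2$, bound $|\epsilon_{n+1}-\epsilon_n|\lesssim n^{a-2}$ and $|\epsilon_{n+1}-\tfrac{1}{n+1}|\lesssim n^{a-1}$, and combine with the moment bounds of Theorem~\ref{theo:momentp}. Your write-up is in fact slightly more explicit than the paper's, since you give the closed formula for $\epsilon_n-\tfrac{1}{n+1}$ and address the uniformity-in-$n$ issue that the paper leaves implicit.
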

The proof of these results is postponed in Appendix \ref{app:lem_quantile}. We deduce from these lemmas that some constant $\Gamma$ exists such that:
\begin{align*}
u_{n+1} & \leq \left(1-\frac{1}{n+1}\right)^2  u_n 
+ \frac{\alpha(1-\alpha)}{(n+1)^2 f(\ta)^2} +\Gamma \left[n^{-(2+a/2)} \vee n^{-(3-a)} \right]\nonumber \\
&\qquad + \Gamma \sqrt{u_n} \left[n^{-(2-a/2)}\vee n^{-(1+a)}\right]. \label{eq:recursion1}
\end{align*}
\noindent
We now apply Lemma \ref{lem:recurrence_finale_3} 
in order to derive the convergence rate of the algorithm. With the notations introduced in Lemma \ref{lem:recurrence_finale_3} applied with $\gamma=1$, $A_1=0$ and:
$$
C=\frac{\alpha(1-\alpha)}{f(\ta)^2} \quad \text{and} \quad r_2= (2+\frac{a}{2}) \wedge (3-a) \quad \text{and}\quad r_1= (2-\frac{a}{2})\wedge(1+a).
$$
We then obtain that a large enough $\Gamma_a$ exists such that:
\begin{equation}
\label{eq:maj_un1}
u_n  = \E (\{\tZn^{(2)}\}^2) \leq \frac{\alpha(1-\alpha)}{f(\ta)^2 n} + \Gamma_a n^{-r},
\end{equation}
where $r= \left( \frac{3}{2}-\frac{a}{2}\right)\wedge \left(\frac{1}{2}+a\right),$ as $a\in[1/2,1)$.
\noindent\underline{Step 2 : Refinement of the previous bound.}\\
Here however the second order term is not optimal and to improve our bound, we  refine our analysis with a precise study of the covariance term:
$$\E(\mathcal{R}_n\tZnd)=\omega_n\E((\tn-\ta)\tZnd) + a_n\left(\frac{1}{n+1}-\epsilon_{n+1}\right) \E(R_n \tZnd).$$
\begin{lem}
\label{lem:cov}
For all $n\ge1$, we have:
$$\E(\mathcal{R}_n\tZnd) \le   \frac{(a+1)\alpha(1-\alpha)}{f(\ta)^3a_1}n^{-3+a} + \frac{L\alpha(1-\alpha)a_1}{2f(\ta)^3} n^{-3/2-a}+ o(n^{-3+a}\vee n^{n^{-3/2-a}}).$$
\end{lem}

Gathering these bounds in \eqref{eq:rec_tZnd} we obtain that:
\begin{align*}
\E (\{\tZnpd\}^2) \le &\left(1-\frac{1}{n+1}\right)^2  \E (\{\tZnd\}^2)+\frac{\alpha(1-\alpha)}{(n+1)^2 f(\ta)^2}\\& +\kappa \frac{\alpha(1-\alpha)}{f(\ta)^3}\left( n^{-(3/2+a)} \vee n^{-(3-a)}\right)+ \varepsilon_n,
\end{align*}
with $\kappa =\max(L K_2 a_1, 2(a+1)/a_1, 2a_1)$ and $\varepsilon_n$ a negligible term with respect to $\left( n^{-(3/2+a)\wedge(3-a)}\right)$.\\
In particular, there exists $n_0$ large enough such that for all $n\ge n_0$, 
\begin{align*}
\E (\{\tZnpd\}^2) \le &\left(1-\frac{1}{n+1}\right)^2  \E (\{\tZnd\}^2)+\frac{\alpha(1-\alpha)}{(n+1)^2 f(\ta)^2}\\& +2\kappa \frac{\alpha(1-\alpha)}{f(\ta)^3}\left( n^{-(3/2+a)} \vee n^{-(3-a)}\right).
\end{align*}
We finally apply Lemma \ref{lem:recurrence_finale_precis} and deduce that a large enough $\Gamma_a$ exists such that:
\begin{equation*}
u_n \leq \frac{\alpha(1-\alpha)}{f(\ta)^2 n} + \kappa' \frac{\alpha(1-\alpha)}{f(\ta)^3} n^{-\left( 2-a\right)\wedge \left(\frac{1}{2}+a\right)} + \Gamma_a n^{-\left( 3-a\right)\wedge \left(\frac{3}{2}+a\right)}
\end{equation*}
where
\begin{equation}
\label{def:kappa'}
\kappa'=\frac{2\kappa}{\left( 3-a\right)\wedge \left(\frac{3}{2}+a\right)}.
\end{equation}
The optimal choice of $a$ is to $a=3/4$ and in this case 
\begin{equation}
\label{eq:maj_unopt}
u_n \leq \frac{\alpha(1-\alpha)}{f(\ta)^2 n} +   \frac{8\kappa \alpha(1-\alpha)}{7f(\ta)^3} n^{-5/4} + \Gamma_a n^{-9/4}.
\end{equation}
\underline{Step 3: Return to $\E(\{\btn-\ta\}^2)$.}\\
From the definition of $\tZn^{(2)}$ we obtain that:
$$\E(\{\btn-\ta\}^2)=\E \{\tZn^{(2)}\}^2+2\epsilon_n\E(\tZn^{(2)}(\tn-\ta))+\epsilon_n^2\E((\tn-\ta)^2).
$$
Therefore, it remains to prove that the two last terms are negligible with respect to $n^{-r}$.
From Theorem \ref{theo:momentp}, we know that a large enough $\Gamma$ exists such that
$$\epsilon_n^2\E((\tn-\ta)^2)\leq  \frac{\Gamma}{n^2a_n},$$
and the Cauchy-Schwarz inequality yields (up to a modification of $\Gamma$):
$$\epsilon_n\E(\tZn^{(2)}(\tn-\ta))\leq \Gamma n^{-3/2-a/2}.$$
\end{proof}

\begin{proof}[Proof of Theorem \ref{theo:rate_averaging}-$ii)$] The proof is deferred to Appendix \ref{app:lem_quantile}.
\end{proof}

\section{Embedded averaging super-quantile algorithm}\label{sec:superquantile}
The purpose of this paragraph is to prove Theorem \ref{theo:rate_vtn},  \textit{i.e.}, we aim to derive a non-asymptotic result on the sequence $(\bvtn)$ (see the definition in Equation \eqref{eq:algo}).

\subsection{Linearization of the embedded algorithm}
To study the behavior of Algorithm \eqref{eq:algo}, we follow the roadmap of Section \ref{sec:quantile} and write a linear approximation of $(\bvtn)_{n \geq 1}$.
To this end, we define:
\begin{equation}\label{def:Q}
Q: \theta \longmapsto \E \left[X \mathbf{1}_{X \geq \theta}\right],
\end{equation}
and
\begin{equation}
\label{eq:mart_super}\Delta N_{n+1}:=X_{n+1}\mathds{1}_{X_{n+1}\geq \btn}-Q(\btn).
\end{equation}
We shall prove the following result.

\begin{prop}\label{prop:linearization} Assume $\mathbf{H}_f$, for any integer $n$, one has
\begin{align}
\bvtnp-\vta  & = (\bvtn-\vta)\left(1-b_n\right) - \frac{b_{n}}{1-\alpha}(\btn-\ta)  \ta f(\ta) \nonumber\\
&\quad\quad + \frac{b_{n}}{1-\alpha} Q''(\xi_{n}) \frac{(\btn-\ta)^2}{2}  + \frac{b_{n}}{1-\alpha} \Delta N_{n+1},
\end{align}
where $\xi_{n} \in (\btn,\ta)$,$V_{\alpha}$ is defined in \eqref{def:Valpha} and $ \Delta N_{n+1}$ verifies:
\begin{equation}
\label{eq:martingale_variance}
\left|\E [\{ \Delta N_{n+1}\}^2 \vert \Fn] - V_{\alpha}\right| \leq C |\btn-\ta|
\quad \text{and} \quad 
  | \E 
 \{ \Delta N_{n+1}\}^2 - V_{\alpha}| \leq C \sqrt{\MM_{n,2}}.
\end{equation}
\end{prop}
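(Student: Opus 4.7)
The plan is to rearrange the update for $\bvtnp$ in \eqref{eq:algo} so as to exhibit a linear contraction, a deterministic first-order correction, a quadratic bias term, and a martingale innovation. Subtracting $\vta$ from both sides of the recursion gives directly
\[
\bvtnp-\vta = (1-b_n)(\bvtn-\vta) + \frac{b_n}{1-\alpha}\bigl(X_{n+1}\mathds{1}_{X_{n+1}>\btn} - (1-\alpha)\vta\bigr).
\]
Using that $(1-\alpha)\vta = \E[X\mathds{1}_{X \geq \ta}] = Q(\ta)$ from \eqref{def:super_quantile} (the distinction between $>$ and $\geq$ being irrelevant since $X$ admits a density), one splits the innovation as $X_{n+1}\mathds{1}_{X_{n+1}>\btn} - Q(\ta) = \Delta N_{n+1} + (Q(\btn)-Q(\ta))$, where $\Delta N_{n+1}$ defined in \eqref{eq:mart_super} is a martingale increment with respect to $(\Fn)_{n\ge 1}$.

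A second-order Taylor expansion of $Q$ around $\ta$ will then complete the linearization. From $Q(\theta) = \int_\theta^{+\infty} xf(x) dx$, one obtains $Q'(\theta) = -\theta f(\theta)$ and $Q''(\theta) = -f(\theta) - \theta f'(\theta)$, both well-defined under $\mathbf{H}_f$ by $\|f\|_\infty<\infty$ and the boundedness of $(1+|\theta|)|f'(\theta)|$. Applying the Taylor-Lagrange formula with $Q'(\ta)=-\ta f(\ta)$ yields $Q(\btn)-Q(\ta) = -\ta f(\ta)(\btn-\ta) + \tfrac{1}{2}Q''(\xi_n)(\btn-\ta)^2$ for some $\xi_n$ between $\btn$ and $\ta$, which is exactly the announced identity.

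For the variance statement I would introduce $H(\theta):=\int_\theta^{+\infty} x^2 f(x)dx - Q(\theta)^2$, so that $H(\ta) = V_\alpha$ by \eqref{def:Valpha} and, since $\btn$ is $\Fn$-measurable, a direct conditional computation gives $\E[\{\Delta N_{n+1}\}^2|\Fn] = H(\btn)$. A differentiation yields $H'(\theta) = \theta f(\theta)(2Q(\theta)-\theta)$. Using $\|f\|_\infty<\infty$ together with the boundedness of $Q$ and the moment assumption in $\mathbf{H}_f$, $H$ is Lipschitz on a neighborhood of $\ta$ with some constant $C$, and the mean-value inequality then provides the conditional estimate $|H(\btn)-V_\alpha|\le C|\btn-\ta|$. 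The unconditional bound follows by integrating and applying Jensen's inequality: $\E|\btn-\ta|\le \sqrt{\MM_{n,2}}$.

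The main anticipated technicality is to establish the Lipschitz property of $H$ on a set that actually captures $\btn$ with overwhelming probability: since $\btn$ is random and a priori unbounded, and since the moment condition on $X$ does not directly imply a pointwise control on $x^2 f(x)$ at infinity, the Lipschitz estimate must likely be patched with a tail argument exploiting the concentration of $\btn$ around $\ta$ already provided by Theorems \ref{theo:momentp} and \ref{theo:rate_averaging}. Once this global control is in hand, both variance inequalities follow immediately from the mean-value argument.
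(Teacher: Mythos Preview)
Your proposal is correct and follows essentially the same route as the paper: the same algebraic rearrangement of the recursion, the same martingale/drift split via $Q$, and the same second-order Taylor expansion using $Q'(\ta)=-\ta f(\ta)$ and $Q''(\theta)=-f(\theta)-\theta f'(\theta)$. For the variance bound the paper writes out the difference $\E[\{\Delta N_{n+1}\}^2\mid\Fn]-V_\alpha$ as explicit integrals over $[\btn,\ta]$ rather than invoking the mean-value theorem through your auxiliary function $H$, but this is the same computation in different clothing. Your anticipated tail difficulty is dispatched in the paper by the direct assertion that $\mathbf{H}_f$ implies $x\mapsto x^2 f(x)$ is bounded on all of $\R$; once this is granted, your $H'(\theta)=\theta f(\theta)(2Q(\theta)-\theta)$ is globally bounded and no patching via the concentration of $\btn$ is needed.
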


\begin{proof}
We observe that:
\begin{align*}
\bvtnp-\vta&=\bvtn-\vta+b_{n}\left[\un_{X_{n+1} \geq \btn}\frac{X_{n+1}}{1-\alpha}-\bvtn\right] \\
&=
\bvtn-\vta+ \frac{b_{n}}{1-\alpha}\left[X_{n+1}(\un_{X_{n+1} \geq \btn}-\un_{X_{n+1} \geq \ta})\right]+b_{n}\left[\un_{X_{n+1} \geq \ta}\frac{X_{n+1}}{1-\alpha}-\bvtn\right]. 
\end{align*}
According to Equations \eqref{def:Q} and \eqref{eq:mart_super}, we obtain that:
$$
\bvtnp-\vta=\left( \bvtn-\vta\right) \left(1-b_{n}\right) + \frac{b_{n}}{1-\alpha} [Q(\btn)-Q(\ta)] + \frac{b_{n}}{1-\alpha} \Delta N_{n+1}.
$$
We use a second order Taylor expansion on $Q$ near $\ta$
using the relationships:
$$
Q(\theta) = \int_{\theta}^{+\infty} x f(x) \text{d}x, \qquad Q'(\theta) = -\theta f(\theta) \quad \text{and} \quad Q''(\theta) = -f(\theta) - \theta f'(\theta).
$$
In particular,  $\mathbf{H}_f$ implies that   $\|Q''\|_{\infty} < + \infty$. 
We obtain that:
\begin{align*}
\bvtnp-\vta  & = (\bvtn-\vta)\left(1-b_n\right) - \frac{b_{n}}{1-\alpha}(\btn-\ta)  \ta f(\ta) \\
&\quad\quad + \frac{b_{n}}{1-\alpha} Q''(\xi_{n}) \frac{(\btn-\ta)^2}{2}  + \frac{b_{n}}{1-\alpha} \Delta N_{n+1},
\end{align*}
where $\xi_{n}\in[\ta,\btn]$. 
\medskip

Now, we compare the variance of the martingale increment and $V_{\alpha}$:
\begin{align*}
\E[ \{\Delta N_{n+1}\}^2\vert \Fn] 
& = \E  \left[ X_{n+1}^2 \un_{X_{n+1} \geq \btn} \vert \Fn\right]  - \left( \E \left[ X_{n+1} \un_{X_{n+1} \geq \btn} \vert \Fn\right] \right)^2 \\
& =  \int_{\btn}^{\infty} x^2 f(x)  dx  - \left[ \int_{\btn}^{+\infty} x f(x) dx\right]^2 \\
& = \int_{\ta}^{+\infty} x^2 f(x) dx -  \left( \int_{\ta}^{+ \infty} x f(x) dx\right)^2  + \delta_{n+1},
\end{align*}
where:
\begin{align*}
\delta_{n+1}& =  \int_{\btn}^{\ta} x^2 f(x) dx -\left[ \int_{\btn}^{+\infty} x f(x) dx\right]^2  + \left( \int_{\ta}^{+ \infty} x f(x) dx\right)^2  \\
& =  \int_{\btn}^{\ta} x^2 f(x) dx  + \left( \int_{\ta}^{\btn} x f(x) dx \right) \left( 2 \int_{\ta}^{+ \infty} x f(x) dx  + \int_{\btn}^{\ta} x f(x) dx \right).
\end{align*}

Using $\mathbf{H}_f$, $x\mapsto x^2f(x)$ is bounded so that a large enough constant $C$ exists such that:
$$
|\delta_{n+1}| \leq C |\btn-\ta|.
$$
Computing the whole expectation and using the Cauchy-Schwarz inequality, we have that:
$$
\left| \E \{\Delta N_{n+1}\}^2 - V_{\alpha} \right| \leq C \sqrt{\MM_{n,2}}.
$$
\end{proof}

\subsection{Spectral analysis and recursion}

We now study the sequence $(\bvtn)$ with the help of the previous linearization combining Propositions \ref{prop:linearization} and \ref{prop:lin_btn}.
We introduce the matrix
\begin{equation}\label{eq:matrix_mn}
B_n :=
\left( 
\begin{matrix}
1-a_n f(\ta) & 0 & 0 \\
\frac{1-a_n f(\ta)}{n+1} & 1-\frac{1}{n+1} & 0 \\
0 & - \frac{\ta f(\ta)}{1-\alpha} b_n & 1-b_n
\end{matrix}
\right).
\end{equation}
We emphasize that the eigenvalues of $B_n$ are 
$$
Sp(B_n) = \left\{1-a_n f(\ta);1-\frac{1}{n+1};1-b_n \right\}.
$$
Obviously, $B_n$ may be reduced to a diagonal form as soon as $b_n \notin\{a_nf(\ta), \frac{1}{n+1}\}$. In particular, we obtain the following decomposition (stated without any proof).
\begin{prop}\label{eq:decomposition_Bn}
For any integer $n$, and if $b_n \notin\{a_nf(\ta), \frac{1}{n+1}\}$, we have
$$
B_n =P_n
D_n 
P_n^{-1}\quad\text{with}\quad D_n=\left( \begin{matrix}
1-a_n f(\ta) & 0 & 0 \\
0 & 1-\frac{1}{n+1} & 0 \\
0 & 0 & 1-b_n
\end{matrix}
\right),
$$
where $P_n$ is the matrix of change of basis:
$$P_n=\left( \begin{matrix}
1 & 0 & 0\\
\epsilon_n & 1 & 0 \\
\epsilon_n\delta_n & \kan & 1
\end{matrix} \right), \qquad P_n^{-1}=\left(\begin{matrix}
1 & 0 & 0\\
-\epsilon_n & 1 & 0 \\
-\epsilon_n \delta_n+\epsilon_n \kan& -\kan  & 1
\end{matrix} \right),$$
with:
$$
\epsilon_n = \frac{1-a_n f(\ta)}{1-a_{n}(n+1) f(\ta)}, \quad \delta_n = \frac{\ta f(\ta)}{(1-\alpha)}\left(\frac{a_n}{b_n}f(\ta)-1)\right)^{-1} , \quad
 \kan= \frac{\ta f(\ta) b_n (n+1)}{(1-\alpha) (1-b_n (n+1))}.
$$
\end{prop}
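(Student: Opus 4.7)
The approach is elementary linear algebra. Since $B_n$ is lower triangular, its spectrum consists of its diagonal entries, namely $1-a_nf(\ta)$, $1-\frac{1}{n+1}$ and $1-b_n$. Under the non-coincidence hypothesis $b_n\notin\{a_nf(\ta),\frac{1}{n+1}\}$ (and noting that $a_nf(\ta)\ne\frac{1}{n+1}$ holds generically for large $n$, and can be treated separately otherwise), these three eigenvalues are pairwise distinct, so $B_n$ is diagonalizable over $\R$. The plan is to exhibit an eigenvector for each eigenvalue by direct substitution, assemble them as the columns of $P_n$, and then write down $P_n^{-1}$ by triangular back-substitution.

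For $\lambda=1-a_nf(\ta)$, solving $(B_n-\lambda I)v=0$: the first row is identically zero; the second row gives $\frac{1-a_nf(\ta)}{n+1}v_1+(a_nf(\ta)-\frac{1}{n+1})v_2=0$, i.e.\ $v_2=\epsilon_n v_1$ with $\epsilon_n=\frac{1-a_nf(\ta)}{1-a_n(n+1)f(\ta)}$; the third row gives $-\frac{\ta f(\ta)}{1-\alpha}b_n v_2+(a_nf(\ta)-b_n)v_3=0$, i.e.\ $v_3=\delta_n v_2=\epsilon_n\delta_n v_1$ with $\delta_n=\frac{\ta f(\ta)}{1-\alpha}\bigl(\frac{a_n}{b_n}f(\ta)-1\bigr)^{-1}$. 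Normalising $v_1=1$ yields the first column of $P_n$. For $\lambda=1-\frac{1}{n+1}$: the first row forces $v_1=0$, the second row is then automatic, and the third row gives $v_3=\kan v_2$ with $\kan=\frac{\ta f(\ta)b_n(n+1)}{(1-\alpha)(1-b_n(n+1))}$; normalising $v_2=1$ yields the second column. For $\lambda=1-b_n$: the first and third rows force $v_1=v_2=0$, and I take $v_3=1$ to get the third column. This proves $B_nP_n=P_nD_n$ exactly as claimed.

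For the inverse, observe that $P_n=I+N_n$ with $N_n$ strictly lower triangular in dimension three (so $N_n^3=0$), hence $P_n^{-1}=I-N_n+N_n^2$ by a finite Neumann series. The only nontrivial entry is position $(3,1)$, which equals $-(\epsilon_n\delta_n)+(-\kan)(-\epsilon_n)=-\epsilon_n\delta_n+\epsilon_n\kan$, matching the statement. Alternatively one verifies $P_nP_n^{-1}=I$ entry by entry in a few lines.

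There is essentially no serious obstacle — the computation is routine — but one must be careful about the non-vanishing of the denominators appearing in $\epsilon_n$, $\delta_n$ and $\kan$; these are exactly what the non-coincidence hypothesis on $b_n$, $a_nf(\ta)$ and $\frac{1}{n+1}$ guarantees, and they correspond to the eigenvalues of $B_n$ being pairwise distinct.
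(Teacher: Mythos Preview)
Your argument is correct. The paper actually states this proposition \emph{without proof}, treating it as a routine verification; your computation---solving $(B_n-\lambda I)v=0$ row by row for each diagonal eigenvalue and then inverting the unit lower-triangular $P_n$ via the finite Neumann series $I-N_n+N_n^2$---is exactly the natural way to supply the missing details, and all the algebra checks out.
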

As pointed out above, $B_n$ may not be reduced to a diagonal form for \textit{any} sequence $b_n \longrightarrow 0$. Therefore, instead of using an exact spectral decomposition as it is the case in \cite{Gadat-Panloup}, the novelty of our work here is to handle a decomposition of  $B_n$ without an exact diagonal form. For this purpose, we introduce the invertible matrix $\widetilde{P}_n$ defined by:
$$
\widetilde{P}_n  = 
\left( \begin{matrix}
1 & 0 & 0\\
\epsilon_n & 1 & 0 \\
\epsilon_n\delta_n &0 & 1
\end{matrix} \right),
$$
and we verify that:
\begin{equation}\label{eq:presque_diagonal}
\widetilde{P}_n^{-1} B_n \widetilde{P}_n = D_n - b_n \frac{\ta f(\ta)}{1-\alpha}
\underbrace{\left( \begin{matrix}
0 & 0 & 0\\
0 & 0 & 0 \\
0 & 1 & 0
\end{matrix} \right)}_{:=E_{3,2}} = D_n - b_n \frac{\ta f(\ta)}{1-\alpha} E_{3,2}.
\end{equation}
\begin{rmq}
The matrix $\widetilde{P}_n$ corresponds to setting $\kan=0$ which removes the singularity issue since $\kan\to\infty$ for $b_n$ close to $1/(n+1)$.
\end{rmq}
Equation \eqref{eq:presque_diagonal} may be used as follows: we introduce the vector defined by:
$$
\con:= \left( \begin{matrix}
\tn-\ta \\ \btn-\ta \\ \bvtn-\vta
\end{matrix}\right),
$$
and we remark that Proposition \ref{prop:linearization} yields:
$$
\conp = B_n \con+\left( \begin{matrix}a_n \Delta M_{n+1}
\\\frac{a_n}{n+1} \Delta M_{n+1} \\ \frac{b_n}{(1-\alpha)} \Delta N_{n+1}
\end{matrix}\right) + \left( \begin{matrix}
a_n R_{n} \\ \frac{a_n}{n+1} R_{n} \\ b_n \widetilde{R}_{n}
\end{matrix}\right) \quad \text{where} \quad \widetilde{R}_{n} :=\frac{Q''(\xi_{n})}{(1-\alpha)} \frac{(\btn-\ta)^2}{2}.
$$
Using  $\widetilde{P}_n$, we translate the spectral information on $B_n$ to the new vector:
$
\tcon := \widetilde{P}_n^{-1} \con.
$
Equation \eqref{eq:presque_diagonal} yields:
\begin{align*}
\tconp & = \widetilde{P}_{n+1}^{-1} \conp\\
&=
\left(D_n - b_n \frac{\ta f(\ta)}{1-\alpha} E_{3,2}\right) \tcon 
+(\widetilde{P}_{n+1}^{-1}\widetilde{P}_n -I_3)\left(D_n - b_n \frac{\ta f(\ta)}{1-\alpha} E_{3,2} \right) \tcon \\
& + \widetilde{P}_{n+1}^{-1}\left( \begin{matrix}
a_n R_{n} \\ \frac{a_n}{n+1} R_{n} \\ b_n \tilde{R}_{n}
\end{matrix}\right) +\widetilde{P}_{n+1}^{-1}\left( \begin{matrix}a_n \Delta M_{n+1}
\\\frac{a_n}{n+1} \Delta M_{n+1} \\ \frac{b_n}{1-\alpha} \Delta N_{n+1}
\end{matrix}\right).
\end{align*}
We define:
\begin{equation}\label{def:eta}
\eta_n := \epsilon_n \delta_n - \epsilon_{n+1} \delta_{n+1}, 
\end{equation}
and verify that:
\begin{align*}
(\widetilde{P}_{n+1}^{-1}\widetilde{P}_n -I_3)
\left(D_n - b_n \frac{\ta f(\ta)}{1-\alpha} E_{3,2} \right) 
& =  \begin{pmatrix}
0&0&0\\
(1-a_n f(\ta))(\epsilon_n-\epsilon_{n+1})&0&0\\
(1-a_n f(\ta)) \eta_n  &0&0
\end{pmatrix}. 
\end{align*}
Considering the third coordinate of the sequence $(\tcon)_{n \ge 1}$, we obtain the recursion:
\begin{align*}
\tconp^{(3)} & = \left(1-b_n\right) \tconn{3} -  b_n \frac{\ta f(\ta)}{1-\alpha}\tconn{2} 
+   (1-a_n f(\ta)) \eta_n   \tconn{1}  \\
& - \epsilon_{n+1} \delta_{n+1} a_n  R_{n} + b_n\widetilde{R}_{n} - \epsilon_{n+1} \delta_{n+1} a_n  \Delta M_{n+1}+ \frac{b_n}{(1-\alpha)} \Delta N_{n+1}.
\end{align*}
All negligible $\mathcal{F}_n$-measurable terms are gathered in $U_n$ defined by:
\begin{equation}
\label{def:U_n}
U_n :=  -  b_n \frac{\ta f(\ta)}{1-\alpha}\tconn{2} 
+   (1-a_n f(\ta)) \eta_n   \tconn{1}  - \epsilon_{n+1} \delta_{n+1} a_n  R_{n} + b_n\widetilde{R}_{n},
\end{equation}
and we regroup the martingale increments into $\Delta\mathcal{V}_{n+1}$ defined by:
$$
\Delta\mathcal{V}_{n+1}=\epsilon_{n+1} \delta_{n+1} a_n  \Delta M_{n+1}+ \frac{b_n}{(1-\alpha)} \Delta N_{n+1}.$$
Then the recursion reads:
\begin{equation}\label{eq:recursion_on}\tconp^{(3)}  = \left(1-b_n\right) \tconn{3}+ U_n +\Delta\mathcal{V}_n.
\end{equation}
The next result gives a recursive inequality on 
$W_n=\E\left(\left\{ \tconn{3}\right\}^2\right).$
\begin{prop}
\label{prop:W_n}
Assume that $\mathbf{H}_f$ and $\mathbf{H}_{(a_n,b_n)}$ hold, then $W_n=\mathbb{E} \{\tconn{3}\}^2$ satisfies:
\begin{equation}
\label{eq:recW}
\begin{aligned}
W_{n+1} &\leq (1-b_n)^2 W_n + \frac{b_n^2V_\alpha}{(1-\alpha)^2}+ C \sqrt{W_n} \left( n^{-(2-a/2)}\vee n^{-(b+1/2)}\right)\\
&\qquad+ C \left(n^{-(2b+a/2)}\vee n^{-(3-a)}\vee  n^{-(2b+1)}\right),
\end{aligned}
\end{equation}
where $V_\alpha$ is given in \eqref{def:Valpha} and $C>0$.
\end{prop}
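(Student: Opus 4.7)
The plan is to square the one-step recursion \eqref{eq:recursion_on} and take the expectation. Since $\Delta\mathcal{V}_{n+1}$ is a sum of $\mathcal{F}_n$-martingale increments (both $\Delta M_{n+1}$ and $\Delta N_{n+1}$ have zero conditional mean given $\mathcal{F}_n$) while both $(1-b_n)\tconn{3}$ and $U_n$ are $\mathcal{F}_n$-measurable, the cross terms involving $\Delta\mathcal{V}_{n+1}$ vanish in expectation, yielding
\begin{equation*}
W_{n+1} \;=\; (1-b_n)^2 W_n \;+\; 2(1-b_n)\,\mathbb{E}\bigl[\tconn{3} U_n\bigr] \;+\; \mathbb{E}[U_n^2] \;+\; \mathbb{E}\bigl[(\Delta\mathcal{V}_{n+1})^2\bigr].
\end{equation*}
The task then reduces to extracting the leading noise coefficient $\frac{b_n^2 V_\alpha}{(1-\alpha)^2}$ from the last term and bounding the remaining residuals at the rates stated in \eqref{eq:recW}.

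For the martingale variance, I would condition on $\mathcal{F}_n$ and split $\mathbb{E}[(\Delta\mathcal{V}_{n+1})^2 \mid \mathcal{F}_n]$ into three pieces. The $\Delta N_{n+1}$ piece, by \eqref{eq:martingale_variance}, equals $V_\alpha + O(|\btn-\ta|)$ in conditional expectation, and after taking the full expectation and using $\MM_{n,2} = O(1/n)$ from Theorem \ref{theo:rate_averaging} $i)$, produces the main contribution $\frac{b_n^2 V_\alpha}{(1-\alpha)^2}$ plus a remainder of order $b_n^2/\sqrt{n}$. The $\Delta M_{n+1}$ piece is bounded by $\alpha(1-\alpha)$ pointwise and is premultiplied by $(\epsilon_{n+1}\delta_{n+1}a_n)^2$, whose asymptotic size is read off from the explicit formulas of Proposition \ref{eq:decomposition_Bn}. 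The cross term between $\Delta M_{n+1}$ and $\Delta N_{n+1}$ is handled by the Cauchy--Schwarz inequality. These last two pieces are then absorbed into the envelope $n^{-(2b+a/2)} \vee n^{-(3-a)} \vee n^{-(2b+1)}$ of \eqref{eq:recW}.

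For $\mathbb{E}[U_n^2]$, I would expand $U_n$ as the sum of its four summands in \eqref{def:U_n} and apply the elementary inequality $(\sum_{i=1}^4 x_i)^2 \le 4 \sum_{i=1}^4 x_i^2$. The first summand $b_n \frac{\ta f(\ta)}{1-\alpha}\tconn{2}$ contributes $O(b_n^2/n)$ via Theorem \ref{theo:rate_averaging} $i)$ applied to $\tconn{2}$, which coincides with the averaged quantile error studied there. The second summand $(1-a_n f(\ta))\eta_n \tconn{1}$ requires a finite-difference estimate $|\eta_n| \lesssim n^{-1}|\epsilon_n\delta_n|$ coming from the smooth dependence on $n$ of $\epsilon_n\delta_n$, combined with $\mathbb{E}(\tn-\ta)^2 \leq A_1 a_n$ from Theorem \ref{theo:momentp} at $q=1$. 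The third summand $\epsilon_{n+1}\delta_{n+1} a_n R_n$ is controlled by the pointwise bound $|R_n| \leq \frac{L}{2}(\tn-\ta)^2$ of Proposition \ref{prop:lin_btn} $iii)$ together with $\mathcal{M}_{n,4} = O(a_n^2)$ from Theorem \ref{theo:momentp} at $q=2$. The fourth summand $b_n\widetilde{R}_n$ satisfies $|\widetilde{R}_n| \lesssim (\btn-\ta)^2$, so Theorem \ref{theo:rate_averaging} $ii)$ applied with $p=2$ gives $\mathbb{E}\widetilde{R}_n^2 = O(n^{-2})$ and hence $\mathbb{E}(b_n\widetilde{R}_n)^2 = O(n^{-(2+2b)})$.

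Finally, the cross term is handled by Cauchy--Schwarz, $|\mathbb{E}[\tconn{3} U_n]| \leq \sqrt{W_n}\sqrt{\mathbb{E}[U_n^2]}$, which produces the prefactor $\sqrt{W_n}$ in \eqref{eq:recW} with an envelope equal to the square root of the dominant piece of $\mathbb{E}[U_n^2]$; the two leading candidates turn out to be $\sqrt{b_n^2/n} = n^{-(b+1/2)}$ from the first summand and $\sqrt{\eta_n^2 a_n} = O(n^{-(2-a/2)})$ (up to a factor $n^{a-b}\le 1$) from the second. Collecting all the estimates yields the claimed inequality. The main obstacle will be the bookkeeping of all powers of $n$ across the admissible region $1/2 < a < b \leq 1$: in particular the $\Delta M_{n+1}$ piece of the martingale variance, whose order $(\epsilon_{n+1}\delta_{n+1}a_n)^2$ depends sharply on whether $b<1$ or $b=1$, transitions through the three regimes $n^{-(2b+a/2)}$, $n^{-(3-a)}$, $n^{-(2b+1)}$ of the envelope as $a$ and $b$ vary, so each of the three terms is essential in a different sub-region of parameter space.
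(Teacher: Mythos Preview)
Your plan is correct and matches the paper's proof essentially step for step: the paper also squares \eqref{eq:recursion_on}, uses the martingale orthogonality to kill the cross terms with $\Delta\mathcal{V}_{n+1}$, applies Cauchy--Schwarz to $\E[\tconn{3}U_n]$, and then packages the two remaining estimates into Lemma~\ref{lem:tec_termes_carres} (the four-term expansion of $U_n$, bounded exactly as you describe) and Lemma~\ref{lem:tec_termes_martingales} (the three-piece split of $\E[\Delta\mathcal{V}_{n+1}^2]$). One small correction to your concluding commentary: the three rates $n^{-(2b+a/2)}$, $n^{-(3-a)}$, $n^{-(2b+1)}$ in the envelope do \emph{not} all stem from the $\Delta M_{n+1}$ contribution to the martingale variance; the first arises from the $O(b_n^2\sqrt{\MM_{n,2}})$ remainder in the $\Delta N_{n+1}$ variance via \eqref{eq:martingale_variance}, the second from the $\Delta M$--$\Delta N$ cross term, and the third from $\E[U_n^2]$ itself (and is in fact dominated by $n^{-(2b+a/2)}$, so redundant).
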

\begin{proof}
A direct square expansion of \eqref{eq:recursion_on} 
yields:
\begin{align*}
W_{n+1}& \le\left(1-b_n\right)^2 W_n+ \E(U_n^2)+2\left(1-b_n\right)  \E[ \tconn{3} U_n] +\E(\Delta\mathcal{V}_{n+1}^2)
\end{align*}

The Cauchy-Schwarz inequality entails
$2\left(1-b_n\right)  \E[ \tconn{3} U_n] \le 2 | 1-b_n| \sqrt{\E [U_n^2]} \sqrt{W_n}
$. The next lemmas derive some bounds on $\E(U_n^2)$ and $\E(\Delta\mathcal{V}_{n+1}^2)$. The proofs are given in Appendix \ref{app:calcul_sq}.
\begin{lem}\label{lem:tec_termes_carres}
Assume that $\mathbf{H}_f$ and $\mathbf{H}_{(a_n,b_n)}$ hold, then:
$$
\E(U_n^2)\lesssim n^{-(2b+1)}.
$$
Moreover, in the special case where $b_n=b_1(n+1)^{-1}$, a positive $C$ exists such that:
$$
\E(U_n^2)\le 4\frac{\alpha(1-\alpha)}{f(\ta)^2 }b_1^2n^{-3}  + C n^{-(4-a)\wedge(\frac{5}{2}+a)}.
$$
\end{lem}
\begin{lem}
\label{lem:tec_termes_martingales}
Assume $\mathbf{H}_f$ and $\mathbf{H}_{(a_n,b_n)}$ hold, then:
$$
\E(\Delta\mathcal{V}_{n+1}^2)=b_n^2\frac{V_\alpha}{(1-\alpha)^2}  + O\left(b_n^2a_n^{1/2}\vee n^{-(3-a)}\right).$$
\end{lem}
\noindent
We apply Lemma \ref{lem:tec_termes_martingales} and  Lemma \ref{lem:tec_termes_carres} and obtain that a large enough $C>0$ exists such that:
\begin{align*}
W_{n+1} \leq (1-b_n)^2 W_n + \frac{b_n^2V_\alpha}{(1-\alpha)^2}+ C \sqrt{W_n}  n^{-(b+1/2)} + C n^{-(2b+a/2)\wedge (3-a)}.
\end{align*}
\end{proof}
We conclude this section with the proof of Theorem \ref{theo:rate_vtn}.

\begin{proof}[Proof of Theorem \ref{theo:rate_vtn}]
~\\\noindent\underline{Case $b_n=b_1(n+1)^{-b}$ with $b\in(1/2,1)$.}
We apply Lemma \ref{lem:recurrence_finale_2} to the recursion obtained in Proposition \ref{prop:W_n}, with 
$$C =\frac{V_\alpha}{(1-\alpha)^2}, \quad r_1= b+1/2  \quad \text{and} \quad r_2=(2b+a/2)\wedge (3-a).$$
Then, we easily verify that the largest possible value of $p$ in Lemma \ref{lem:recurrence_finale_2} is 
$$
p=r_1-\frac{b}{2} \wedge r_2-b\wedge 2b \wedge 1 = \frac{b+1}{2}.
$$
Since $b<p$, we shall apply Lemma \ref{lem:recurrence_finale_2} and we obtain that a large enough $\Gamma_a$ exists such that:
$$
W_n \leq \frac{V_\alpha}{2(1-\alpha)^2}b_n + \Gamma n^{-\frac{b+1}{2}}.
$$
\medskip
~\\\noindent\underline{Case $b_n=b_1(n+1)^{-1}$.}
In this case we use  Lemma \ref{lem:tec_termes_carres} to refine the recursion obtained in Proposition \ref{prop:W_n} into:
\begin{align*}
W_{n+1} &\leq (1-\frac{b_1}{n+1})^2 W_n + \frac{b_1^2V_\alpha}{(1-\alpha)^2(n+1)^2}+ C \left(n^{-(2+a/2)\wedge (3-a)}\right)\\
&\qquad +2 \sqrt{W_n}(1-b_n)\sqrt{\frac{4\alpha(1-\alpha)}{f(\ta)^2 }}b_1n^{-3/2} (1-b_n)\sqrt{1+Cn^{-(1-a)\wedge(a-1/2)}}\\
&\leq (1-\frac{b_1}{n+1})^2 W_n + \frac{b_1^2V_\alpha}{(1-\alpha)^2(n+1)^2}+ C \left(n^{-(2+a/2)\wedge (3-a)}\right)\\
&\qquad +\sqrt{W_n}\left[4b_1\frac{\sqrt{\alpha(1-\alpha)}}{f(\ta)}n^{-3/2}+Cn^{-(5/2-a)\wedge(a+1)}\right].\\
\end{align*}
We now use Lemma \ref{lem:recurrence_finale_3} with $\gamma=b_1$, $C=\frac{b_1^2V_\alpha}{(1-\alpha)^2}$, $A_1=4 b_1\sqrt{\frac{\alpha(1-\alpha)}{f(\ta)^2 }}$, $r_1=(5/2-a)\wedge(a+1)$ and $r_2=(2+a/2)\wedge(3-a)$. 
If we choose $\rho=(1+\frac{a}{2})\wedge (2-a)$
and
\begin{align*}
C_{\alpha,b_1}&=\left(\frac{A_1}{2(2\gamma-1)} + \sqrt{\frac{A_1^2}{4(2\gamma-1)^2}+\frac{C}{2\gamma-1}}\right)^2\\
&= \frac{4b_1^2\alpha(1-\alpha)}{(2b_1-1)^2f(\ta)^2}\left[1+\sqrt{1+\frac{V_\alpha f(\ta)^2(2b_1-1)}{4\alpha(1-\alpha)^3}} \right]^2,
\end{align*}
then we obtain that for any $b_1>\rho-1/2$:
$$W_n\le \frac{C_{\alpha, b_1}}{n}+\Gamma n^{-(1+a/2)\wedge (2-a)},
$$
and  this inequality is made optimal when $a=\frac{2}{3}$, which entails a  second order term equals to $n^{-4/3}$ when $b_1>5/6$.

\medskip
~\\\noindent\underline{Conclusion:}
To conclude the proof, we come back to $\E(\{\bvtn-\vta\}^2)$.
From the definition of $\tcon^{(3)}$ we obtain 
$$\E(\{\bvtn-\vta\}^2)=\E( \{\tcon^{(3)}\}^2)+2\epsilon_n\delta_n\E(\tcon^{(3)}(\tn-\ta))+\epsilon_n^2\delta_n^2\E((\tn-\ta)^2).
$$
Therefore, it remains to prove that the two last terms are negligible with respect to the first one.
From Theorem \ref{theo:momentp}, we know that a large enough $C$ exists such that
$$\epsilon_n^2\delta_n^2\E((\tn-\ta)^2)\leq  Cn^{-2-2b+3a},$$
and the Cauchy-Schwarz inequality yields (up to a modification of $C$):
$$\epsilon_n\delta_n\E(\tcon^{(3)}(\tn-\ta))\leq C n^{-(1+\frac{3}{2}b-\frac{3}{2}a)}.$$
When $b=1$, an easy comparison leads to:
$$\frac{n^{-4+3a}+ n^{-(5/2-3/2a)}}{n^{-(1+a/2)\wedge (2-a)}}\longrightarrow0.$$
When $b\in(1/2,1)$ and $a<b$, we can verify that both terms are negligible when compared to $n^{-(1+a)/2}$.
\end{proof}
\section{Central Limit Theorem}\label{sec:tcl}

Deriving a central limit theorem for a stochastic algorithm is a common way to assess the asymptotic variance of the algorithm. Central limit theorem for superquantiles has been addressed in \cite{bardou2009computing} for the Cesaro averaging of the sequence $(\vtn)_{n \geq 1}$. Another related relevant work on the subject is the paper of \cite{MokkademPelletier2006} where a general central limit theorem is obtained for stochastic algorithms with two different time scales that allows to study a weak convergence result for the pair $(\tn,\vtn)$ (see also \cite{BCG1}). We also refer to the pioneering works of \cite{borkar,tsitsiklis} for other results on two time-scales algorithms. 
However, in our case, even though  $(\tn,\bvtn)_{n \ge 1}$ seems to evolve with two different time scales $(a_n,b_n)_{n \geq 1}$, the evolution of  $(\bvtn)_{n \geq 1}$ from iteration $n$ to iteration $n+1$ highly depends on $\btn$ and therefore a third time-scale $(1/(n+1))$ is involved. This last remark significantly complicates the use of Theorem 1 of \cite{MokkademPelletier2006}. All the more, it appears that some of the technical assumptions used in \cite{MokkademPelletier2006} (especially Assumption (A4)-$iii)$) can be avoided with the help of another proof strategy.

In what follows, we will first consider the case where $b_n=b_1(n+1)^{-1}$ in which the convergence speed of $(\vtn)_{n \ge 1}$ is optimal, and derive a joint convergence for $(\btn,\bvtn)$ that evolves with a single time scale proportionnal to $1/n$ (see Equation \ref{eq:algo}). To derive a central limit theorem for our pair $(\btn,\bvtn)_{n \geq 1}$, we use the diffusion approximation of stochastic algorithm and we follow the roadmap of \cite{benveniste_metivier_priouret} (see also \cite{Kushner_Yin03}).
We write the joint evolution of a rescaled version of the algorithm $(\btn,\bvtn)_{n \geq 1}$ and introduce:
\begin{equation}\label{eq:algo_rescale}
\Tn = \sqrt{n} (\btn-\ta) \qquad \text{and} \qquad \Vtn = \sqrt{\frac{1}{b_n}} (\bvtn-\vta).
\end{equation}

\subsection{Case of a step sequence $b_n=b_1/(n+1)$}
\subsubsection{Decomposition of $(\Tn)_{n \geq 1}$ and $(\Vtn)_{n \geq 1}$}

To obtain a central limit theorem, we will use an approximation of the rescaled algorithm by a stochastic differential equation. More precisely, we will show that the pair $(\Tn,\Vtn)_{n \geq 1}$ is close to the discretization of a specific Markov process: the Ornstein-Uhlenbeck diffusion.
Of course, if one considers the first coordinate, we will then recover the central limit theorem for the Cesaro averaging of the quantile sequence.
We emphasize that our method significantly differs from the ones previously developed to establish central limit theorem results for the Ruppert-Polyak averaging algorithm: \cite{polyakjuditsky}, \cite{pelletier2000},\cite{Godichon} are essentially based on the Abel transform and the decomposition of Equation \eqref{eq:decomposition_pelletier}.

\noindent\underline{Recursive analysis of $(\tTn)_{n \geq 1}$.}
Instead of directly studying the pair $(\Tn,\Vtn)_{n \geq 1}$, we first consider the reduction of $(\btn-\ta)$ obtained after our eigenvalue decomposition in Section \ref{sec:averaging_quantile}. We recall the definition of $(\tZn)_{n \geq 1}$ in \eqref{def:tZn} and we introduce:
$$
\tTn := \sqrt{n} \tZn^{(2)} = \sqrt{n} \left[(\btn-\ta)-\epsilon_n(\tn-\ta)\right] =\Tn-\sqrt{n} \epsilon_n (\tn-\ta).
$$
We remark, using \eqref{eq:rec-tZn}, that:
\begin{align*}
\tTnp &= \sqrt{n+1} \tZnp^{(2)} \nonumber \\
& = \sqrt{n+1} \left( \frac{n}{n+1} \tZn^{(2)} + a_n \left( \frac{1}{n+1}-\epsilon_{n+1}\right) \Delta M_{n+1} +\mathcal{R}_n \right) \nonumber \\
& = \sqrt{\frac{n}{n+1}} \tTn + a_n \sqrt{n+1} \left( \frac{1}{n+1}-\epsilon_{n+1}\right) \Delta M_{n+1}   +  \sqrt{n+1}  \mathcal{R}_n.\nonumber 
\end{align*}
Finally
\begin{align}
\tTnp &= \tTn - \frac{1}{2(n+1)} \tTn + a_n \sqrt{n+1} \left( \frac{1}{n+1}-\epsilon_{n+1}\right) \Delta M_{n+1} + \tEnun, \label{eq:linear_tTn}
\end{align}
where $\tEnun$ satisfies:
\begin{equation}\label{eq:borne_tEnun}
|\tEnun| \lesssim \frac{|\tTn|}{n^2} + \frac{|\tn-\ta|}{n^{3/2-a}} + \frac{(\tn-\ta)^2}{\sqrt{n}}.
\end{equation} The previous upper bound is obtained using a Taylor approximation of $\sqrt{n/(n+1)}$ on the first term, the control $\mathcal{R}_n$ defined in \eqref{eq:restes} given by Lemma \ref{lem:t2}, the definition of $(\epsilon_n)_{n \geq 1}$ and Equation \eqref{eq:reste_1}.

\noindent\underline{Recursive analysis of $(\Vtn)_{n \geq 1}$.}
We use the recursive formulation given in Proposition \ref{prop:linearization}:
\begin{align*}
  \Vtnp &=\Vtn \sqrt{1+\frac{1}{n+1}} \left(1-\frac{b_1}{n+1}\right) - \frac{\sqrt{b_1}\ta f(\ta)}{1-\alpha} \frac{\sqrt{n+2}}{\sqrt{n} (n+1)} \Tn  \\
 & + \frac{\sqrt{b_1}}{(1-\alpha)} \frac{\sqrt{n+2}}{n+1} \Delta N_{n+1} \nonumber  +\frac{ \sqrt{b_1}}{1-\alpha} \frac{\sqrt{n+2}}{n+1} Q''(\xi_{n+1}) (\btn-\ta)^2.
 \end{align*}
 Using $\Tn=\tTn+\epsilon_n\sqrt{n}(\tn-\ta)$ and  a first order Taylor expansion of $\sqrt{1+1/(n+1)}$ and of $\sqrt{(n+2)/n}$, we deduce that:
 \begin{align}
  \Vtnp & = 
  \Vtn - \frac{b_1-\frac{1}{2}}{n+1} \Vtn  - \frac{ \frac{\sqrt{b_1}\ta f(\ta)}{1-\alpha}}{n+1} \tTn +\frac{ \sqrt{b_1}}{(1-\alpha)}\frac{\sqrt{n+2}}{n+1} \Delta N_{n+1} +\tEndeux,
  \label{eq:linear_tVn}
  \end{align}
where $\tEndeux$ satisfies:
\begin{equation}
\label{eq:borne_tEndeux}
|\tEndeux| \lesssim \frac{|\Vtn|}{n^2} +
\frac{|\tTn|}{n^2} + \frac{|\btn-\ta|^2}{\sqrt{n}} + \frac{|\tn-\ta|}{n^{3/2-a}}.
\end{equation}

Gathering Equations \eqref{eq:linear_tTn} and \eqref{eq:linear_tVn}, we have obtained the following proposition.
\begin{prop}
\label{prop:lin_TCL}
For any $n \geq 1$, define $\Tzn = (\tTn,\Vtn)$, then:
$$
\Tznp= \Tzn - \underbrace{\frac{1}{n+1}
 \left( \begin{matrix}
 1/2 & 0 \\
 \frac{\sqrt{b_1} \ta f(\ta)}{1-\alpha} & b_1-1/2
\end{matrix} \right)\Tzn + \tEn }_{:=H_n(\Tzn)} + \frac{1}{\sqrt{n+1}} 
\underbrace{\left(
\begin{matrix}
 a_n(n+1) \left(\frac{1}{n+1}-\epsilon_{n+1}\right) \Delta M_{n+1}\\
\frac{\sqrt{b_1}}{ (1-\alpha)}\sqrt{\frac{n+2}{n+1}} \Delta N_{n+1} 
\end{matrix}
\right)}_{:=\Delta\mathcal{M}_{n+1}}, 
$$
where $\tEn=(\tEnun,\tEndeux)$ satisfies:
\begin{align}
\|\tEn\| & \lesssim  \frac{|\btn-\ta|}{n^{3/2}}  +\frac{|\btn-\ta|^2}{\sqrt{n}}  
+
 \frac{|\tn-\ta|}{n^{3/2-a}} + \frac{|\tn-\ta|^2}{\sqrt{n}}+\frac{|\bvtn-\vta|}{n^{3/2}}.
\label{eq:reste}
\end{align}
\end{prop}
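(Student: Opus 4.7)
The plan is to observe that Proposition~\ref{prop:lin_TCL} simply repackages the two scalar recursions \eqref{eq:linear_tTn} and \eqref{eq:linear_tVn} (together with the bounds \eqref{eq:borne_tEnun} and \eqref{eq:borne_tEndeux}) into a single vectorial recursion, so the proof will amount to careful bookkeeping rather than new analytic work.

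First, I would read off the deterministic drift matrix. The coefficient $\tfrac{1}{2(n+1)}\tTn$ in \eqref{eq:linear_tTn} gives the $(1,1)$ entry (with no coupling to $\Vtn$, reflecting the fact that $\tTn$ depends only on the quantile dynamics), while the two drift terms in \eqref{eq:linear_tVn} give the second row of the matrix in $H_n$, namely $\tfrac{\sqrt{b_1}\ta f(\ta)/(1-\alpha)}{n+1}$ multiplying $\tTn$ and $\tfrac{b_1-1/2}{n+1}$ multiplying $\Vtn$. The martingale part is recovered by factoring out $1/\sqrt{n+1}$: in the $\tTn$ recursion, $a_n\sqrt{n+1}\bigl(\tfrac{1}{n+1}-\epsilon_{n+1}\bigr) = \tfrac{1}{\sqrt{n+1}}\, a_n(n+1)\bigl(\tfrac{1}{n+1}-\epsilon_{n+1}\bigr)$, and in the $\Vtn$ recursion, $\tfrac{\sqrt{b_1}}{1-\alpha}\tfrac{\sqrt{n+2}}{n+1} = \tfrac{1}{\sqrt{n+1}}\tfrac{\sqrt{b_1}}{1-\alpha}\sqrt{\tfrac{n+2}{n+1}}$; these match the two coordinates of $\Delta\mathcal{M}_{n+1}$.

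The main computation is to translate the individual bounds \eqref{eq:borne_tEnun}--\eqref{eq:borne_tEndeux} into the claimed bound \eqref{eq:reste} expressed in the original variables. I would use the identities $\tTn = \sqrt{n}\bigl[(\btn-\ta)-\epsilon_n(\tn-\ta)\bigr]$, giving $|\tTn|/n^2 \lesssim |\btn-\ta|/n^{3/2} + |\epsilon_n|\,|\tn-\ta|/n^{3/2}$, together with the bound $|\epsilon_n| \lesssim n^{a-1}$ (which follows from the explicit definition $\epsilon_n = (1-a_n f(\ta))/(1-a_n(n+1)f(\ta))$ since $a_n(n+1)\to\infty$ for $a<1$); and $\Vtn = \sqrt{(n+1)/b_1}(\bvtn-\vta)$, giving $|\Vtn|/n^2 \lesssim |\bvtn-\vta|/n^{3/2}$ since $b_n \asymp 1/n$. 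Combining these substitutions with \eqref{eq:borne_tEnun}--\eqref{eq:borne_tEndeux} and using $\|\tEn\| \le |\tEnun|+|\tEndeux|$ produces exactly the five terms in \eqref{eq:reste}, the cross contribution $|\epsilon_n|\,|\tn-\ta|/n^{3/2} \lesssim |\tn-\ta|/n^{5/2-a}$ being dominated by $|\tn-\ta|/n^{3/2-a}$.

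The main potential pitfall is just to ensure that all rescaling factors cancel correctly: the $\sqrt{n}$ coming from $\Tn=\sqrt{n}(\btn-\ta)$ and the $\sqrt{1/b_n}$ coming from $\Vtn$ must match the normalizations $1/(n+1)$ and $1/\sqrt{n+1}$ in front of the drift and martingale terms, and the order of $\epsilon_n$ must be consistent with bounding the $(\tn-\ta)$ contribution to $\tEn$. None of this requires a new idea; the heavy lifting has already been done in deriving \eqref{eq:linear_tTn}--\eqref{eq:borne_tEndeux}.
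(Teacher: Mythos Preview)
Your proposal is correct and follows exactly the paper's approach: the paper simply states that Proposition~\ref{prop:lin_TCL} is obtained by ``gathering Equations \eqref{eq:linear_tTn} and \eqref{eq:linear_tVn}'', and you have spelled out precisely that bookkeeping, including the translation of the bounds \eqref{eq:borne_tEnun}--\eqref{eq:borne_tEndeux} into \eqref{eq:reste} via the identities for $\tTn$ and $\Vtn$ and the estimate $|\epsilon_n|\lesssim n^{a-1}$.
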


\subsubsection{Tightness and limit of the martingale increments}

We use some standard notations of diffusion approximations of rescaled stochastic algorithms. We introduce:
$$
\Gamma_n = \sum_{k=1}^n \frac{1}{k},
$$
The number of iterations needed to last a time $t$ after a shift of $\Gamma_n$ is then denoted by
$\underline{t}_n$:
$$\underline{t}_n= \Gamma_{N(n,t)}- \Gamma_n \qquad \text{and} \qquad N(n,t) = \max\left\{m\geq n,  \sum\limits_{k=n+1}^m \frac{1}{k}<t\right\}.$$
We associate to the sequence $(\Tzn)_{n\geq 1}$ a sequence $(\Tztn)_{n\ge1}$ of time-shifted continuous processes defined as follows: for any integer $n$, the process $(\Tztn_t)_{t \geq 0}$ corresponds to a continuous-time affine interpolation of the rescaled stochastic algorithm that starts at position $\Tzn$ at time $0$:
$$\Tztn_t=\tilde{\mathbb{Z}}_{N(n,t)}+(N(n,t)+1)(t-\underline{t}_n)(\tilde{\mathbb{Z}}_{N(n,t)+1}-\tilde{\mathbb{Z}}_{N(n,t)}).$$
Using the notations of Proposition \ref{prop:lin_TCL} we remark  that:
\begin{equation}\label{eq:Tztn}
\forall t \geq 0 \qquad \Tztn_t= \Tzn+ B_t^{(n)}+M_t^{(n)},
\end{equation}
where: 
$$B_t^{(n)} = - \sum\limits_{k=n+1}^{N(n,t)}  H_{k-1}(\mathbb{Z}_{k-1}) - (t-\underline{t}_n) (N(n,t)+1)H_{N(n,t)}(\mathbb{Z}_{N(n,t)}),$$
and:
$$
M_t^{(n)}=\sum\limits_{k=n+1}^{N(n,t)}\sqrt{ \frac{1}{k}}\Delta\mathcal{M}_k+\sqrt{N(n,t)+1}(t-\underline{t}_n)\Delta\mathcal{M}_{N(n,t)+1}.$$

In this paragraph, we identify the possible weak limits of the sequence $\Tztn$ (see \textit{e.g.} \cite{benveniste_metivier_priouret}) and some  details are skipped for the sake of convenience. The reader may found similar arguments in Section 5 of \cite{gadat_panloup_saadane}.

We define the infinitesimal generator $\mathcal{G}$ : for any $\varphi \in \mathcal{C}_b^2(\mathbb{R}^2,\mathbb{R})$:
\begin{align}
\label{eq:generateur}
\mathcal{G}(\varphi)(x,y)=&-\frac{x}{2} \partial_x \varphi - \left(\frac{\sqrt{b_1} \ta f(\ta)}{1-\alpha} x + (b_1-1/2) y\right) \partial_y \varphi \nonumber\\
&+ \frac{\alpha(1-\alpha)}{2 f(\ta)^2} \partial_{xx}^2 \varphi + \frac{b_1 V_{\alpha}}{2(1-\alpha)^2} \partial_{yy}^2 \varphi + \sqrt{b_1}\frac{\alpha \vta}{f(\ta)} \partial_{xy}^2 \varphi.
\end{align}

\begin{prop}\label{prop:tightness}
\begin{itemize} 
\item[$i)$]The sequence $(\Tztn)_{n \geq 1}$ is tight for the weak topology induced by the weak convergence on compact intervals of continuous time processes.
\item[$ii)$]If $V_\alpha$ is defined in \eqref{def:Valpha} then
$$\E[\Delta\mathcal{M}_{n+1}\Delta\mathcal{M}_{n+1}^t \lvert \mathcal{F}_{n}]=\begin{pmatrix}
\frac{\alpha(1-\alpha)}{f(\ta)^2} &\frac{\sqrt{b_1}\alpha\vta}{f(\ta)}\\
\frac{\sqrt{b_1}\alpha\vta}{f(\ta)}&\frac{b_1 V_\alpha}{(1-\alpha)^2}
\end{pmatrix}+\upsilon_n,
$$
where  $\upsilon_n \longrightarrow 0$ in $L^1$.
\item[$iii)$] For any function $\varphi \in \mathcal{C}^2_b(\mathbb{R}^2,\mathbb{R})$, 
$$
\E \left[\varphi(\Tznp) \, \vert \Tzn \right] = \varphi (\Tzn) + \frac{1}{n+1} \mathcal{G}(\varphi)(\Tzn)+\mathbb{Q}_n,
$$
where $\lim_{n \longrightarrow + \infty} n \E |\mathbb{Q}_n| = 0$.
\end{itemize}
\end{prop}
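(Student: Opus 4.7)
The strategy follows the classical diffusion-approximation roadmap: establish uniform $L^2$-boundedness of the rescaled iterates, apply Doob's inequality to the martingale part and a bounded-variation argument to the drift in order to deduce tightness on compact intervals, then identify the coefficients of the limiting generator through direct expansions.

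\textbf{Step 1 (tightness).} By Theorem \ref{theo:rate_averaging} $i)$ and Theorem \ref{theo:rate_vtn} $ii)$, together with Theorem \ref{theo:momentp}, one has $\sup_n \E\|\Tzn\|^2 < + \infty$, so the marginals of $(\Tztn)_{n \ge 1}$ are tight at any fixed time. I use the decomposition \eqref{eq:Tztn}: for the martingale term, Proposition \ref{prop:lin_btn} $ii)$, Proposition \ref{prop:linearization} and the asymptotics of the scaling factors $a_n(n+1)(\frac{1}{n+1}-\epsilon_{n+1})$ and $\sqrt{(n+2)/(n+1)}$ yield $\E[\|\Delta\mathcal{M}_{k+1}\|^2\vert \F_k]$ uniformly bounded in $k$, so that Doob's $L^2$ inequality gives
\[
\E \sup_{t \le T} \|M_t^{(n)}\|^2 \lesssim \sum_{k=n+1}^{N(n,T)} \frac{1}{k} \le T.
\]
For the drift, the decomposition $H_n(\Tzn) = \frac{1}{n+1}M\Tzn + \tEn$ in Proposition \ref{prop:lin_TCL} shows that $\sum_{k=n+1}^{N(n,T)}\|H_k(\mathbb{Z}_k)\|$ is uniformly controlled in $L^1$: the linear part contributes $\int_0^T\|\Tztn_s\|ds$, while \eqref{eq:reste} combined with the moment bounds of Theorems \ref{theo:momentp}--\ref{theo:rate_vtn} makes each remainder term summable with vanishing tail. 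Aldous' criterion, applied separately to $B_t^{(n)}$ and $M_t^{(n)}$, then delivers tightness.

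\textbf{Step 2 (martingale bracket).} Proposition \ref{prop:lin_btn} $ii)$ yields $\E[\{\Delta M_{n+1}\}^2|\F_n] = \alpha(1-\alpha) + O(|\tn-\ta|)$ and Proposition \ref{prop:linearization} gives $\E[\{\Delta N_{n+1}\}^2|\F_n] = V_\alpha + O(|\btn-\ta|)$. A direct expansion of the cross product shows
\[
\E[\Delta M_{n+1} \Delta N_{n+1}\vert \F_n] = F(\tn)Q(\btn) - \int_{\btn\wedge\tn}^{\btn\vee\tn} x f(x) dx,
\]
which converges to $F(\ta)Q(\ta) = \alpha(1-\alpha)\vta$ with an $L^1$ error dominated by $\E|\tn-\ta|+\E|\btn-\ta|$. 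Combined with the asymptotics $a_n(n+1)(\frac{1}{n+1}-\epsilon_{n+1}) \to 1/f(\ta)$ (obtained from $\epsilon_{n+1} \sim -1/(a_{n+1}(n+2)f(\ta))$) and $\sqrt{(n+2)/(n+1)}\to 1$, the claimed covariance matrix follows, with $\upsilon_n$ vanishing in $L^1$.

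\textbf{Step 3 (generator).} A second-order Taylor expansion of $\varphi$ around $\Tzn$ gives
\[
\varphi(\Tznp) = \varphi(\Tzn) + \nabla\varphi(\Tzn)^{\top}(\Tznp-\Tzn) + \tfrac{1}{2}(\Tznp-\Tzn)^{\top} D^2\varphi(\Tzn)(\Tznp-\Tzn) + r_n,
\]
with $|r_n|\le \omega(\|\Tznp-\Tzn\|)\,\|\Tznp-\Tzn\|^2$ for $\omega$ the modulus of continuity of $D^2\varphi$. Taking conditional expectation, and using $\E[\Delta\mathcal{M}_{n+1}|\F_n]=0$ and Proposition \ref{prop:lin_TCL},
\[
\E[(\Tznp-\Tzn)(\Tznp-\Tzn)^{\top}\vert \F_n] = \tfrac{1}{n+1}\E[\Delta\mathcal{M}_{n+1}\Delta\mathcal{M}_{n+1}^{\top}\vert \F_n] + H_n(\Tzn)H_n(\Tzn)^{\top},
\]
so that plugging in Step 2 and $H_n(\Tzn)=\frac{1}{n+1}M\Tzn+\tEn$ produces $\frac{1}{n+1}\mathcal{G}(\varphi)(\Tzn)$ plus four error sources: (a) $\nabla\varphi^{\top}\tEn$, (b) $\|H_n\|^2 = O(1/n^2)\,O(\|\Tzn\|^2)$, (c) $\upsilon_n/(n+1)$ from the bracket, (d) the Taylor remainder $r_n$. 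Each satisfies $n\E|\cdot|\to 0$ by the moment bounds of Theorems \ref{theo:momentp}--\ref{theo:rate_vtn} and, for (d), by uniform integrability of $\|\Tzn\|^2$ together with the fact that $\|\Tznp-\Tzn\|\to 0$ in probability.

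The main obstacle is the bookkeeping for (a): controlling $n\E|\nabla\varphi^{\top}\tEn|$ requires each of the five terms in \eqref{eq:reste} to be $o(1/n)$ in $L^1$, and the term $|\tn-\ta|/n^{3/2-a}$ is critical, giving $n \cdot n^{-3/2+a} \cdot \E|\tn-\ta| \lesssim n^{-1/2+a/2}\cdot(\text{const})$, which vanishes precisely because $a<1$; likewise $|\tn-\ta|^2/\sqrt{n}$ uses $\E(\tn-\ta)^2\lesssim n^{-a}$ with $a>1/2$. This is the single place where the restriction $1/2<a<b\le 1$ in $\mathbf{H}_{(a_n,b_n)}$ is essential to the diffusion approximation.
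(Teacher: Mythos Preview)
Your argument follows the same diffusion-approximation strategy as the paper (decompose into drift $B^{(n)}$ and martingale $M^{(n)}$, then identify the generator via a second-order Taylor expansion), and parts $ii)$--$iii)$ match the paper's computations closely. The paper's proof of $iii)$ is in fact much terser than yours; your breakdown into error sources (a)--(d) and the explicit check that the terms $|\tn-\ta|/n^{3/2-a}$ and $(\tn-\ta)^2/\sqrt{n}$ require exactly $1/2<a<1$ is a welcome clarification.

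Two points deserve attention. First, for the tightness of the martingale part the paper does \emph{not} use an $L^2$ Doob bound: it applies Doob's $L^p$ maximal inequality with the exponent $p>2$ supplied by $\mathbf{H}_f$ (the moment assumption $\E|X|^p<\infty$), obtaining
\[
\mathbb{P}\Bigl(\sup_{s\in[t,t+\delta]}\|M_s^{(n)}-M_t^{(n)}\|\ge\epsilon\Bigr)\lesssim \epsilon^{-p}\,N(n,t)^{-(p-2)/2}\,\delta,
\]
which, after union over $O(1/\delta)$ subintervals, feeds Billingsley's Theorem~7.3. Your displayed bound $\E\sup_{t\le T}\|M_t^{(n)}\|^2\lesssim T$ is correct but is a uniform $L^2$ bound over $[0,T]$, not a small-increment estimate; as written it does not control the modulus of continuity and does not plug into Aldous either. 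If you want Aldous, you should state instead $\E\|M^{(n)}_{\tau+\delta}-M^{(n)}_{\tau}\|^2\lesssim\delta$ for bounded stopping times $\tau$, which follows from the uniformly bounded conditional covariance of $\Delta\mathcal{M}_{k+1}$; that works and is indeed slightly more elementary than the paper's $L^p$ route, but note that Aldous yields tightness in $D$, and the passage to $C$-tightness needs the continuity of the interpolated paths plus the continuity of the limiting Ornstein--Uhlenbeck process.

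Second, in Step~2 your cross-bracket formula is not quite right: one has $\E[\Delta M_{n+1}\Delta N_{n+1}\vert\F_n]=F(\tn)Q(\btn)-\E[X_{n+1}\un_{\btn\le X_{n+1}\le\tn}\vert\F_n]$, where the last term vanishes when $\btn>\tn$ (it is not the symmetric integral $\int_{\btn\wedge\tn}^{\btn\vee\tn}$). This does not affect your conclusion since you only need the term to be $O(|\tn-\ta|+|\btn-\ta|)$ in $L^1$, which it is.
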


The proof of this previous   compactness result is deferred to Appendix \ref{app:tcl}.

\subsubsection{Central Limit Theorem - Theorem \ref{theo:tcl}-$ii)$}

\begin{proof}[Proof of Theorem \ref{theo:tcl}-$ii)$]
The proof is briefly sketched since it exactly follows the same lines as those of Theorem 2.4 of \cite{gadat_panloup_saadane} (Section 5.3).\\
\noindent
$\bullet$
From Proposition \ref{prop:tightness}-$i)$, we know  that the sequence of \textit{processes} $(\Tztn)_{n \geq 1}$ is tight and Proposition \ref{prop:tightness}-$ii)/iii)$ entails that any weak limit of $(\Tztn)_{n \geq 1}$ is solution of the martingale problem with generator $\mathcal{G}$ on the domain of twice differentiable functions $\mathcal{C}^2_b(\mathbb{R}^2,\mathbb{R})$. We emphasize that $\mathcal{G}$ corresponds to an Ornstein-Uhlenbeck Markov generator. The associated semi-group $P_t = e^{t \mathcal{G}}$ is elliptic and ergodic: a unique invariant Gaussian distribution $\nu_{\infty}$ exists such that for any test function $\varphi$: $P_t(\varphi) \longrightarrow \nu_{\infty}(\varphi)$ as $t \longrightarrow + \infty$
uniformly on compact sets of $\R^2$. We compute the limit covariance in Proposition \ref{prop:tightness}-$ii)$.

\noindent
$\bullet$
Second, the normalized algorithm $(\Tzn)_{n \geq 1}$ is also a tight sequence: we consider a possible weak limit $\mu$ associated to an extracted sequence $(\tilde{\mathbb{Z}}_{n_k})_{k \geq 1}$ and a time $T$ large enough. Then, we prove that an associated subseqence of shifted processes $\tilde{\mathbb{Z}}^{(m_k)}$ exists such that $\tilde{\mathbb{Z}}^{(m_k)}_T$ is arbitrarily close to $\nu_{\infty}$ and to $\mu$, because of the shifted nature of continuous time processes.

\noindent
$\bullet$ We then conclude that $\mu = \nu_{\infty}$, meaning that $(\Tzn)_{n \geq 1}$ is a tight sequence with a unique adherence point. It proves that $ \Tzn   \underset{n \to +\infty}{\overset{\mathcal{L}}{\longrightarrow}} \nu_{\infty}$.
We recall that $\nu_{\infty}$ is a bi-dimensional gaussian with mean $0$ and covariance matrix $\Sigma$.\\
We finally obtain the convergence of  $\sqrt{n} (\btn-\ta,\bvtn-\vta)$  using the Slutsky theorem. Let us remark that:
$$\sqrt{n} \begin{pmatrix}\btn-\ta\\
\bvtn-\vta\end{pmatrix} = \begin{pmatrix}
\Tzn^{(1)}+\sqrt{n}\epsilon_n(\tn-\ta)\\
\sqrt{b_1}\sqrt{\frac{n}{n+1}}\Tzn^{(2)}
\end{pmatrix}.
$$
We verify that:
$$
\E  \sqrt{n} \epsilon_n |\tn-\ta| \lesssim \frac{1}{\sqrt{n a_n}} \longrightarrow 0 \quad \text{as} \quad n \longrightarrow + \infty,
$$
so that $\sqrt{n} \epsilon_n (\tn-\ta)$ converges to $0$ in probability (even in $L^1$).
We therefore deduce that $\sqrt{n} (\btn-\ta,\bvtn-\vta)$  converges  to a Gaussian distribuion with mean $0$ and covariance matrix
$$
S^2 =\begin{pmatrix}
\Sigma_{xx} & \sqrt{b_1} \Sigma_{xy} \\
\sqrt{b_1} \Sigma_{xy} &  b_1  \Sigma_{yy}
\end{pmatrix}.
$$
\noindent
$\bullet$
We finally detail the computation of the limiting variance $\Sigma$ and  identify the invariant measure $\nu_{\infty}$ associated to the infinitesimal generator $\mathcal{G}$ defined in \eqref{eq:generateur}.
\begin{itemize}
\item[$\diamond$] We apply $\int \mathcal{G}\varphi(x,y) \nu_{\infty}(dx,dy)=0$ to $\varphi(x,y)=x^2/2$ and observe that:
$$\int \left[-\frac{x^2}{2} +\frac{\alpha(1-\alpha)}{2f(\ta)^2} \right] \nu_{\infty}(dx,dy) =0,$$
which leads to:
$$\Sigma_{xx}=\frac{\alpha(1-\alpha)}{f(\ta)^2}.$$
\item[$\diamond$] Then using the function $\varphi(x,y)=xy$, we obtain that:
$$\int -\frac{xy}{2} -\left(\frac{\sqrt{b_1} \ta f(\ta)}{1-\alpha} x + (b_1-1/2) y\right)x +\frac{\sqrt{b_1}\alpha \vta}{f(\ta)} \nu_{\infty}(dx,dy)=0.$$
Therefore, we deduce that:
$$\Sigma_{xy}b_1 = \frac{\sqrt{b_1}\alpha \vta}{f(\ta)}-\frac{\sqrt{b_1} \ta f(\ta)}{1-\alpha} \Sigma_{xx},$$
and 
$$\Sigma_{xy}=\frac{\alpha}{f(\ta)\sqrt{b_1}}(\vta-\ta).$$
\item[$\diamond$]
Finally using $\varphi(x,y)=y^2/2$, we have
$$\int - \left(\frac{\sqrt{b_1} \ta f(\ta)}{1-\alpha} x + (b_1-1/2) y\right) y +\frac{b_1 V_{\alpha}}{2(1-\alpha)^2} \nu_{\infty}(dx,dy)=0,$$
and thus
$$\Sigma_{yy}=\frac{2}{2b_1 -1}\left( b_1\frac{V_\alpha}{2(1-\alpha)^2}-\frac{\alpha\ta}{(1-\alpha)}(\vta-\ta)\right).$$
\end{itemize}
\end{proof}

\subsection{Case of a step sequence $b_n=b_1/n^b$}
Since the computations are rather similar, we only sketch the essential steps of the proof.

\begin{proof}[Proof of Theorem \ref{theo:tcl}-$i)$]
We still use Proposition \ref{prop:linearization} as:
\begin{align*}
  \Vtnp &=\Vtn \sqrt{\frac{b_n}{b_{n+1}}} \left(1-b_n\right)+\sqrt{b_n}\sqrt{\frac{b_n}{b_{n+1}}}\frac{ \Delta N_{n+1}}{1-\alpha}  \\
 & -\frac{b_n}{\sqrt{b_{n+1}}}(\btn-\ta)\frac{\ta f(\ta)}{1-\alpha}+\frac{ b_n}{\sqrt{b_{n+1}}(1-\alpha)} Q''(\xi_{n+1}) (\btn-\ta)^2.
 \end{align*}
Following Proposition \ref{prop:lin_TCL}, we can identify the main part of the recursion and the  rest terms:
\begin{align*}
  \Vtnp &=\Vtn -\Vtn b_n+ \mathcal{E}_{n+1}+\sqrt{b_n}\frac{ \Delta N_{n+1}}{1-\alpha}  \sqrt{\frac{b_n}{b_{n+1}}},
 \end{align*}
 where 
 $$\mathcal{E}_{n+1}= \Vtn (1-b_n)\left(\sqrt{\frac{b_n}{b_{n+1}}}-1\right)  -\frac{b_n}{\sqrt{b_{n+1}}}(\btn-\ta)\frac{\ta f(\ta)}{1-\alpha}+\frac{ b_n}{\sqrt{b_{n+1}}(1-\alpha)} Q''(\xi_{n+1}) (\btn-\ta)^2.$$
Let us remark that:
$$\lvert\mathcal{E}_{n+1}\lvert \lesssim \frac{\lvert \Vtn\lvert}{n} + n^{-b/2} \lvert \btn-\ta\lvert+n^{-b/2} \lvert \btn-\ta\lvert^2.$$
Then, the arguments developed in the previous sections can be easily adapted. In the following we only detail the modifications and skip below the technical details related to the discrete/continuous time-scale.

\noindent
\underline{Tightness of $(\Vtn)_{n \ge 1}$.}
The proof is similar to the one stated in Appendix \ref{app:tcl}. Indeed, $\sup_{n \geq 1} \mathbb{E} \|\Vtn \|^2 < + \infty$ and for any $\delta>0$, $\sum_{N(n,t)}^{N(n,t+\delta)+1}   b_k  \leq 2 \delta$ for large $n$, thus 
\begin{equation}\label{eq:tension1_bis}
\mathbb{P}\left( \left| \sum_{N(n,t)}^{N(n,t+\delta)+1}  b_k\mathbb{V}_{k} \right| \geq \epsilon \right) \lesssim
\epsilon^{-2} \delta^2.
\end{equation}
The only modification comes from the rest terms:
\begin{align*}
\left\|\sum_{N(n,t)}^{N(n,t+\delta)+1}  \mathcal{E}_k \right\|^2& \lesssim \left(  \sum_{N(n,t)}^{N(n,t+\delta)+1} 
\frac{|\bar{\theta}_k-\ta|}{k^{b/2}}+
\frac{|\bar{\theta}_k-\ta|^2}{k^{b/2}} + 
\frac{|\widehat{\vartheta}_k-\vta|}{k^{1-b/2}} 
  \right)^2 \\
  & \lesssim  \left(\sum_{N(n,t)}^{N(n,t+\delta)} 
\frac{|\bar{\theta}_k-\ta|}{k^{b/2}}\right)^2 + 
\left( \sum_{N(n,t)}^{N(n,t+\delta)} 
\frac{|\bar{\theta}_k-\ta|^2}{k^{b/2}} \right)^2 
+\left( \sum_{N(n,t)}^{N(n,t+\delta)} 
\frac{|\widehat{\vartheta}_k-\vta|}{k^{1-b/2}} \right)^2.
\end{align*}
We apply the Cauchy-Schwarz inequality:
\begin{align*}
\left\|\sum_{N(n,t)}^{N(n,t+\delta)+1} \mathcal{E}_k \right\|^2&\lesssim \left(\sum_{N(n,t)}^{N(n,t+\delta)+1} 
b_k\right)\times \left[
\sum_{N(n,t)}^{N(n,t+\delta)+1} 
|\bar{\theta}_k-\ta|^2+ 
|\bar{\theta}_k-\ta|^4   +\frac{|\widehat{\vartheta}_k-\vta|^2}{k^{2-2b}} \right].
\end{align*}
We shall now compute the expectation of the previous terms and verify that: 
\begin{align*}
\E \left\|\sum_{N(n,t)}^{N(n,t+\delta)}  \tilde{\mathcal{E}}_k \right\|^2 &\lesssim 
\left(\sum_{N(n,t)}^{N(n,t+\delta)} 
b_k
  \right) \left(\sum_{N(n,t)}^{N(n,t+\delta)} 
b_k \left[ \frac{\E  |\bar{\theta}_k-\ta|^2}{b_k}
+ \frac{\E  |\bar{\theta}_k-\ta|^4}{b_k} +\frac{\E  |\widehat{\vartheta}_k-\vta|^2}{k^{2-b}}
\right]
  \right).
\end{align*}
Using now  Theorem \ref{theo:rate_averaging} ($L^p$ loss on the sequence $(\btn)_{n \geq 1}$) and Theorem \ref{theo:rate_vtn} ($L^2$ loss on the sequence $(\vtn)_{n \geq 1}$),
we obtain that the bracket term is uniformly bounded in $k$, and therefore:
$$
\E \left\|\sum_{N(n,t)}^{N(n,t+\delta)}  \tilde{\mathcal{E}}_k \right\|^2 \lesssim \left(\sum_{N(n,t)}^{N(n,t+\delta)} 
b_k
  \right)^2 \lesssim \delta^2.
$$
Hence, the sequence $\Vtn$ is tight. Finally, applying the above arguments and the bound on $\mathcal{E}_n$, any weak limit is solution of the martingale problem associated with the generator: 
\begin{align}
\label{eq:generateur_bis}
\mathcal{G}(\varphi)(x)=&-x\varphi'(x)  + \frac{ V_{\alpha}}{2(1-\alpha)^2}  \varphi''(x).
\end{align}
It remains to identify the invariant measure associated with the Ornstein-Ulhenbeck diffusion \eqref{eq:generateur_bis}. 
An easy computation with $\varphi(x)=x^2/2$ yields the next result.
$$
\frac{\bvtn-\vta}{\sqrt{b_n}} \underset{n \to +\infty}{\overset{\mathcal{L}}{\longrightarrow}} \mathcal{N}\left(0,\frac{ V_{\alpha}}{2(1-\alpha)^2}\right).
$$
\end{proof}

\section*{Acknowledgments}
The authors would like to warmly thank Bernard Bercu for fruitful discussions about their work. They also thank the two anonymous referees for their insightful comments and
constructive suggestions which helped to improve the paper.
%
\providecommand{\AC}{A.-C}\providecommand{\CA}{C.-A}\providecommand{\CH}{C.-H}\providecommand{\CJ}{C.-J}\providecommand{\JC}{J.-C}\providecommand{\JP}{J.-P}\providecommand{\JB}{J.-B}\providecommand{\JF}{J.-F}\providecommand{\JJ}{J.-J}\providecommand{\JM}{J.-M}\providecommand{\KW}{K.-W}\providecommand{\PL}{P.-L}\providecommand{\RE}{R.-E}\providecommand{\SJ}{S.-J}\providecommand{\XR}{X.-R}\providecommand{\WX}{W.-X}

\newpage

\appendix
\section{From Lyapunov contraction to non-asymptotic upper bound}

The next lemmas provide non-asymptotic upper-bounds from a Lyapunov-type contraction. 
\begin{lem}\label{lem:recurrence_finale_2}
Let $b_n=b_1n^{-b}$ with $\frac{1}{2}<b<1$, $A,C>0$, $r_1>\frac{3b}{2}$ and $r_2>2b$.
Consider a positive sequence $u_n$ such that:
$$
u_{n+1} \leq \left(1-b_n\right)^2 u_n + Cb_n^2 + A \left[n^{-r_1} \sqrt{u_n} + n^{-r_2}\right], 
$$
then, for any $p \in ]b,(r_1-b/2) \wedge (r_2-b )\wedge 2b\wedge 1[$, a large enough $\Gamma_p$ exists such that:
\begin{equation}
\label{eq:recursion} 
u_n \leq \frac{C}{2 }b_n + \Gamma n^{-p},
\end{equation}
\end{lem}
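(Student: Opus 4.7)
The strategy is a direct induction on the ansatz $u_n \leq \frac{C}{2} b_n + \Gamma n^{-p}$. I will fix an integer $n_0$ and a constant $\Gamma$ large enough so that (i) the induction step propagates the inequality for every $n \geq n_0$ and (ii) the base case $u_{n_0} \leq \frac{C}{2} b_{n_0} + \Gamma n_0^{-p}$ holds trivially by enlarging $\Gamma$ if necessary.

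For the induction step, I first expand the leading block
\[
(1-b_n)^2 \tfrac{C}{2} b_n + C b_n^2 \;=\; \tfrac{C}{2}b_n\,(1-2b_n+b_n^2) + Cb_n^2 \;=\; \tfrac{C}{2}b_n + \tfrac{C}{2} b_n^3,
\]
which is the key cancellation driving convergence to the invariant value $\tfrac{C}{2}b_n$. Using the Lipschitz control on $n\mapsto n^{-b}$ I rewrite $\tfrac{C}{2}b_n = \tfrac{C}{2}b_{n+1} + O(n^{-b-1})$. For the remainder block, I use $(1-b_n)^2 \leq 1-b_n$ for $n$ large and $(n+1)^{-p} \geq n^{-p}(1-p/n)$ to derive
\[
(1-b_n)^2 \Gamma n^{-p} \;\leq\; \Gamma (n+1)^{-p} + \Gamma p\, n^{-p-1} - \Gamma b_1\, n^{-p-b}
\;\leq\; \Gamma (n+1)^{-p} - \tfrac{\Gamma b_1}{2} n^{-p-b},
\]
where the last inequality holds for $n$ large enough because $p\,n^{-p-1}=o(b_1 n^{-p-b})$ as $b<1$. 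This negative term $-\tfrac{\Gamma b_1}{2} n^{-p-b}$ is the budget used to absorb every other error term.

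It then remains to show that the four error contributions are $o(n^{-p-b})$ with small enough constants. Using subadditivity $\sqrt{u_n} \leq \sqrt{Cb_1/2}\, n^{-b/2} + \sqrt{\Gamma}\, n^{-p/2}$ and $p>b$, we have $\sqrt{u_n}\lesssim n^{-b/2}$, so $An^{-r_1}\sqrt{u_n} = O(n^{-r_1-b/2})$. The four error terms are then of orders
\[
n^{-b-1}\;\;(\text{from the index shift}),\quad n^{-3b}\;\;(\text{from }\tfrac{C}{2}b_n^3),\quad n^{-r_1-b/2},\quad n^{-r_2},
\]
and each is dominated by $\tfrac{\Gamma b_1}{8} n^{-p-b}$ for large $n$ exactly when
\[
p\;<\;1,\qquad p\;<\;2b,\qquad p\;<\;r_1-\tfrac{b}{2},\qquad p\;<\;r_2-b,
\]
which is precisely the hypothesis on $p$.

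The delicate point I expect to handle carefully is that the second component of $\sqrt{u_n}$, namely $A\sqrt{\Gamma}\, n^{-r_1-b/2}$, involves $\sqrt{\Gamma}$ and must still be dominated by $\tfrac{\Gamma b_1}{8} n^{-p-b}$. Fortunately this reduces to $\tfrac{4A}{b_1\sqrt{\Gamma}} \leq n^{r_1-b/2-p}$, whose right-hand side diverges since $r_1-b/2-p>0$; so a single threshold $n_0=n_0(\Gamma)$ works, and $\Gamma$ can afterwards be chosen large enough to absorb the finitely many initial values $u_1,\ldots,u_{n_0}$ into the base case, completing the induction.
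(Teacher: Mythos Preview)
Your proposal is correct and follows essentially the same induction-on-the-ansatz strategy as the paper: both plug $u_n \le \tfrac{C}{2}b_n + \Gamma n^{-p}$ into the recursion, exploit the cancellation $(1-b_n)^2\tfrac{C}{2}b_n + Cb_n^2 = \tfrac{C}{2}b_n + \tfrac{C}{2}b_n^3$, and then verify that the residual error terms (of orders $n^{-b-1}$, $n^{-3b}$, $n^{-r_1-b/2}$, $n^{-r_2}$) are absorbed by the negative contribution $-\Gamma b_1 n^{-p-b}$ precisely under the stated constraints on $p$. The only cosmetic difference is that the paper groups the errors into $e_{n+1}$ and $\tilde e_{n+1}$ and expands $(1+1/n)^p(1-b_n)^2-1$ directly, whereas you use $(1-b_n)^2\le 1-b_n$ together with $(n+1)^{-p}\ge n^{-p}(1-p/n)$; both yield the same budget up to a harmless factor. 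Your remark on the $\sqrt{\Gamma}$ dependence is well taken; note that since the threshold $n_0(\Gamma)$ is nonincreasing in $\Gamma$, the apparent circularity in first fixing $n_0$ and then enlarging $\Gamma$ for the base case is indeed benign.
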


\begin{proof}
We prove the result with a recursive argument. 
The initialization is obtained for a large enough $\Gamma$. Assume that Equation \eqref{eq:recursion} holds true for an integer $n$, then we have 
\begin{align*}
u_{n+1}& \leq \frac{C}{2 }b_n \left(1-b_n\right)^2 
+ \Gamma n^{-p} \left(1-b_n\right)^2+\frac{C}{(n+1)^2} \\
&\qquad \qquad \qquad + A \left[n^{-r_1} \sqrt{\frac{C}{2}b_n+\Gamma n^{-p}}+n^{-r_2}\right] \\
& \leq \frac{C}{2}b_{n+1}+\Gamma (n+1)^{-p} + e_{n+1}+\tilde{e}_{n+1},
\end{align*}
where: $$
e_{n+1}=\frac{C}{2}b_n\left(1-b_n\right)^2  - \frac{C}{2}b_{n+1} +Cb_n^2,
$$
and:
\begin{align*}
\tilde{e}_{n+1}& =  \Gamma n^{-p} \left(1-b_n\right)^2 - \Gamma (n+1)^{-p} +A n^{-r_2}+A n^{-r_1}\sqrt{\frac{C}{2}b_n+\Gamma n^{-p}}.
\end{align*} 
The aim is now to prove that for a good choice of $p$, $e_{n+1}+\tilde{e}_{n+1}$ is negative.
For $e_{n+1}$ an easy computation leads to:
$$
e_{n+1} =\frac{C}{2}(b_n-b_{n+1}+b_n^3)=O(n^{-3b}\vee n^{-b-1}).
$$
For  $\tilde{e}_{n+1}$, we combine the two first terms
to obtain:
$$\Gamma(n+1)^{-p}\left[\left(1+\frac{1}{n}\right)^p\left(1-b_n\right)^2-1\right]= \Gamma(n+1)^{-p}\left[-2b_n +O(n^{-1})\right].$$
This difference  is therefore negative as soon as $n$ is large enough. Finally:
\begin{align*}
\tilde{e}_{n+1}&= \Gamma (n+1)^{-(p+b)}\left[-2b_1 +\frac{A}{\Gamma} n^{p+b-r_2}+n^{p+b-(r_1+b/2)}\frac{A}{\Gamma} \sqrt{\frac{C}{2 }}\right.\\
& \qquad+ \left.
O(n^{-r_1+b)})+O(n^{-1+b})\right].
\end{align*}

Now, we should remark that if
$p +b \leq (r_1+b/2) \wedge r_2 \wedge 3b\wedge (b+1)$, then a large enough $\Gamma$ exists such that $e_{n+1}+\tilde{e}_{n+1} \leq 0$.
It concludes the proof.
\end{proof}

We now consider the limit case where $r_1=3/2$. 
\begin{lem}\label{lem:recurrence_finale_3}
Let $\gamma>1/2$, $A_1,A_2,C>0$, $r_1>3/2$ and $r_2>2$ and consider a positive sequence $u_n$ such that
$$
u_{n+1} \leq \left(1-\frac{\gamma}{n+1}\right)^2 u_n + \frac{C}{(n+1)^2} +  \sqrt{u_n}\left(A_1 n^{-3/2}+A_2n^{-r_1}\right)+ A_3n^{-r_2}, 
$$
Let us define:
 $$
V= \left(\frac{A_1}{2(2\gamma-1)} + \sqrt{\frac{A_1^2}{4(2\gamma-1)^2}+\frac{C}{2\gamma-1}}\right)^2 \quad \text{and}  \quad \rho=(r_2-1) \wedge (r_1-\frac{1}{2})\wedge 2 >1.
$$
If $\rho-2\gamma+A_1/(2\sqrt{V})<0$ then, a large enough $\Gamma$ exists such that:
\begin{equation*}
\forall n \geq 1 \qquad 
u_n \leq \frac{V}{n}+\Gamma n^{-\rho}.
\end{equation*}
\end{lem}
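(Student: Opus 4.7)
The plan is to proceed by induction on $n$, mimicking the template of Lemma \ref{lem:recurrence_finale_2} but using a sharper expansion of $\sqrt{u_n}$ in order to capture the $A_1/(2\sqrt V)$ correction. The starting observation is that $V$ is designed to be the positive root of the quadratic $(2\gamma-1)x^2 - A_1 x - C = 0$, so that the identity $(2\gamma-1)V = A_1\sqrt V + C$ holds; this identity is what will make the main term $V/(n+1)$ reproduce itself at each step.

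Assume inductively that $u_n \le V/n + \Gamma n^{-\rho}$. Substituting into the hypothesis, the block $(1-\gamma/(n+1))^2\, V/n + C/(n+1)^2$ expands (using $V/n = V/(n+1) + V/[n(n+1)]$ and $C/(n+1)^2 = C/[n(n+1)] - C/[n(n+1)^2]$) into
$$
\frac{V}{n+1} - \frac{(2\gamma-1)V - C}{n(n+1)} + \frac{\gamma^2 V - C}{n(n+1)^2} = \frac{V}{n+1} - \frac{A_1\sqrt V}{n(n+1)} + O(n^{-3}),
$$
where we used the quadratic identity. For the cross term, I would bound $\sqrt{u_n}$ by $\sqrt{V/n}\sqrt{1+\Gamma n^{1-\rho}/V} \le \sqrt{V/n} + \Gamma n^{1/2-\rho}/(2\sqrt V)$, exploiting $\sqrt{1+x}\le 1+x/2$. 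This gives
$$
\sqrt{u_n}\,A_1 n^{-3/2} \le \frac{A_1\sqrt V}{n^2} + \frac{A_1\Gamma}{2\sqrt V}\, n^{-\rho-1},
$$
and the first piece combines with $-A_1\sqrt V/[n(n+1)]$ to leave only an $O(n^{-3})$ residual. Similarly, $(1-\gamma/(n+1))^2\,\Gamma n^{-\rho} \le \Gamma(n+1)^{-\rho} + (\rho - 2\gamma)\Gamma n^{-\rho-1} + O(\Gamma n^{-\rho-2})$, using $n^{-\rho}-(n+1)^{-\rho}\le \rho n^{-\rho-1}$ and $n/(n+1)\ge 1-1/n$.

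Collecting all contributions, the coefficient of $\Gamma n^{-\rho-1}$ is exactly $\rho - 2\gamma + A_1/(2\sqrt V)$, which is strictly negative by hypothesis; call $\kappa := 2\gamma - \rho - A_1/(2\sqrt V) > 0$. The remaining $\Gamma$-free error terms ($A_2\sqrt V\,n^{-r_1-1/2}$, $A_2 n^{-r_2}$, and the two $O(n^{-3})$ pieces) are all bounded by $K_0 n^{-\rho-1}$ since $\rho \le (r_1 - 1/2)\wedge(r_2-1)\wedge 2$, and the remaining $\Gamma$-dependent errors ($O(\Gamma n^{-\rho-2})$ and the $\Gamma$ contribution from $\sqrt{u_n}\,A_2 n^{-r_1}$) are of order $\Gamma n^{-\rho-3/2}$ hence strictly smaller. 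Hence
$$
u_{n+1} - \tfrac{V}{n+1} - \Gamma(n+1)^{-\rho} \le -\kappa\,\Gamma n^{-\rho-1} + K_0 n^{-\rho-1} + K_1 \Gamma n^{-\rho - 3/2},
$$
which is nonpositive as soon as $n \ge n_0$ (with $K_1/\sqrt{n_0} \le \kappa/2$) and $\Gamma \ge 2K_0/\kappa$.

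To close the induction, one picks $\Gamma$ large enough to simultaneously (i) satisfy the step condition above and (ii) verify the base case $u_n \le V/n + \Gamma n^{-\rho}$ for the finitely many $n \le n_0$. The main technical delicacy is the second paragraph: one must track the constants at two different scales simultaneously, ensuring that the upgrade from $\sqrt{a+b}\le \sqrt a + \sqrt b$ (too crude here, as in Lemma \ref{lem:recurrence_finale_2}) to the quadratic approximation $\sqrt{1+x}\le 1+x/2$ is used precisely when it is needed to read off the coefficient $A_1/(2\sqrt V)$, while verifying that none of the discarded higher-order terms reintroduce a leading contribution of order $\Gamma n^{-\rho-1}$.
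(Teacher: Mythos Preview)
Your proposal is correct and follows essentially the same route as the paper's proof: induction on $n$, identification of $V$ as the positive root of $(2\gamma-1)X^2 - A_1 X - C = 0$ so that the leading $n^{-2}$ contributions cancel, the expansion $\sqrt{1+x}\le 1+x/2$ to extract the $A_1/(2\sqrt V)$ coefficient in front of $\Gamma n^{-\rho-1}$, and finally absorbing all lower-order terms by choosing $\Gamma$ large. One small imprecision: the $\Gamma$-dependent piece coming from $\sqrt{u_n}\,A_2 n^{-r_1}$ is $O(\Gamma n^{-\rho-(r_1-1/2)})$, which under the sole assumption $r_1>3/2$ is only $o(\Gamma n^{-\rho-1})$ rather than $O(\Gamma n^{-\rho-3/2})$; this does not affect the argument, since $o(\Gamma n^{-\rho-1})$ is all you need to absorb it into $-\kappa\Gamma n^{-\rho-1}$ for $n\ge n_0$.
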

\begin{rmq}
Using the value of $V$, we have $\sqrt{V}\ge 2\frac{A_1}{2(2\gamma-1)} $ thus
$$\frac{A_1}{2\sqrt{V}} \leq \gamma - \frac{1}{2}.$$
Therefore the condition $\rho-2\gamma+A_1/(2\sqrt{V})<0$ is automatically  satisfied as soon as $\gamma>\rho-1/2$.
\end{rmq}

\begin{proof}
We proceed by induction on $n$, in order to prove that 
$
u_n \leq \frac{V}{n} + \Gamma n^{-\rho},
$
for the largest possible value of $\rho$.\\
The initialization is obvious at the price of a large enough $\Gamma$. We consider the recursion property assuming that the property holds at iteration $n$. Then:
\begin{align*}
u_{n+1} &\leq \left(1-\frac{\gamma}{n+1}\right)^2 u_n + \frac{C}{(n+1)^2} + (A_1 n^{-3/2}+A_2n^{-r_1}) \sqrt{u_n} + A_3n^{-r_2} \\
& \leq \left(1-\frac{\gamma}{n+1}\right)^2 \left[  \frac{V}{n} + \Gamma n^{-\rho}\right] + \frac{C}{(n+1)^2} + (A_1 n^{-3/2} +A_2n^{-r_1}) \sqrt{\frac{V}{n} + \Gamma n^{-\rho}} + A_3 n^{-r_2} \\
& = \frac{V}{n+1} + \Gamma (n+1)^{-\rho} + \left[\frac{V}{n}-\frac{V}{n+1} - \frac{2 \gamma V}{n(n+1)} + \frac{C}{(n+1)^2} \right] \\
&\qquad+ \Gamma (n+1)^{-\rho} \left[(1+n^{-1})^{\rho}\left(1-\frac{\gamma}{n+1}\right)^2 - 1\right] \\
&  \qquad+  \frac{\gamma^2V}{n(n+1)^2} + (A_1 \sqrt{V} n^{-2}+A_2\sqrt{V}n^{-r_1-1/2}) \left(1+\Gamma V^{-1} n^{-(\rho-1)}\right)^{1/2} + A_3 n^{-r_2} \\
& = \frac{V}{n+1} + \Gamma (n+1)^{-\rho} + \epsilon_n.
\end{align*}
We observe that for $n$ large enough, we have:
$$
\left(1+\Gamma V^{-1} n^{-(\rho-1)}\right)^{1/2} \leq 1 + \frac{\Gamma}{2 V} n^{-(\rho-1)} + O(n^{-2(\rho-1)}).
$$
Hence, we deduce that:
\begin{align*}
\epsilon_n & \leq \frac{V (1-2 \gamma) + A_1 \sqrt{V} + C }{n(n+1)} + \Gamma (n+1)^{-\rho} \left[(1+n^{-1})^{\rho}\left(1-\frac{\gamma}{n+1}\right)^2 - 1\right] \\
& +\frac{A_1\Gamma}{ 2\sqrt{V}}n^{-(\rho+1)}  + \frac{\gamma^2 V}{n(n+1)^2}+ A_3 n^{-r_2} + A_2\sqrt{V}n^{-r_1-1/2}+O(n^{-2\rho})+O(n^{-r_1-\rho+1/2}).
\end{align*}
We choose $\sqrt{V}$ as the positive root of $(1-2 \gamma) X^2+A_1 X+C$ and obtain that:
\begin{equation}\label{eq:defV}
\sqrt{V}= \frac{A_1}{2(2\gamma-1)} + \sqrt{\frac{A_1^2}{4(2\gamma-1)^2}+\frac{C}{2\gamma-1}},
\end{equation}
for which the leading term of $\epsilon_n$ vanishes. Then, we observe that
$$
 (1+n^{-1})^{\rho}\left(1-\frac{\gamma}{n+1}\right)^2 - 1 = 1+\frac{ \rho}{n}-\frac{2 \gamma}{n+1} - 1 + O(n^{-2}) = \frac{\rho-2 \gamma}{n} + O(n^{-2}).
$$
Hence, $\epsilon_n$ is upper bounded by:
\begin{align*}
\epsilon_n& \leq
\Gamma \left[ (\rho-2 \gamma)  +\frac{A_1}{2 \sqrt{V}} \right] n^{-(\rho+1)} +  \frac{\gamma^2 V}{n(n+1)^2} + A_3 n^{-r_2}+ A_2\sqrt{V}n^{-r_1-1/2}\\&\qquad+O(n^{-2\rho})+O(n^{-r_1-\rho+1/2})+O(n^{-(\rho+2)})
\end{align*}
Since $r_1>3/2$ and $r_2>2$, we then observe that as soon as $(\rho,\gamma)$ satisfies 
$$\rho = (r_2-1) \wedge \left(r_1-\frac{1}{2}\right) \wedge 2,\quad\text{and}\quad \gamma >\rho-\frac{1}{2}$$
then a large enough $\Gamma$ exists such that $\epsilon_n \leq 0$.
\end{proof}

\begin{lem}\label{lem:recurrence_finale_precis}
Let $\gamma>1/2$, $3>r>2$ and consider a positive sequence $u_n$ such that
$$
u_{n+1} \leq \left(1-\frac{1}{n+1}\right)^2 u_n + \frac{C}{(n+1)^2} + An^{-r}, 
$$
Then, there exists a large enough constant $\Gamma$ such that: 
\begin{equation*}
\forall n\ge 1, \quad u_n \leq \frac{C}{n}+\frac{A}{3-r} n^{-(r-1)} + \Gamma n^{-r}.
\end{equation*}
\end{lem}
\begin{proof}
Let us proceed directly by recursion. Define $\pi_n=\prod_{k=1}^n (1-\frac{1}{k+1})^2=(n+1)^{-2}$, then 
$$u_{n+1}\le \pi_n u_0 +\sum_{k=1}^n \frac{C}{k+1}^2\frac{\pi_n}{\pi_k}+\sum_{k=1}^n A k^{-r}\frac{\pi_n}{\pi_k}.$$
Thus we deduce that: 
\begin{align*}
u_{n+1}&\le \frac{ u_0}{(n+1)^2} +\frac{Cn}{(n+1)^2}+\frac{A}{(n+1)^2}\sum_{k=1}^n  k^{-r}(k+1)^2.
\end{align*}
Now using a series-integral comparison 
$$\sum_{k=1}^n  k^{-r}(k+1)^2\le 4+ \frac{n^{3-r}}{3-r}+ 2\frac{n^{2-r}}{2-r}+\frac{n^{1-r}}{1-r},$$
which yields:
\begin{align*}
u_{n+1}&\le \frac{Cn}{(n+1)^2}+\frac{A}{(3-r)n^{1-r}} + \left[  \frac{ u_0}{(n+1)^2}+\frac{4A}{(n+1)^2}+2\frac{n^{-r}}{2-r}+\frac{n^{-r-1}}{1-r}\right].
\end{align*}
Let us  remark that the leading term within the brackets is of the order $n^{-r}$, which leads to the conclusion.
A similar reasoning also holds if the inequality is verified after an integer $n_0$.
\end{proof}
\section{Technical results for the quantile estimation }
\label{app:tech_tn}
\subsection{Technical results for  $(\tn)_{n\ge1}$ - Functions $(V_q)_{q \ge 1}$}
\begin{proof}[Proof of Lemma \ref{lem:prop_phi}]
A straigthforward computation from the Equation \eqref{def:Vq} yields:
\begin{equation}\label{eq:calculVq}
\begin{aligned}
&V_q'(\theta) = q V_{q-1}(\theta) \Phi'(\theta)+\Phi'(\theta) V_q(\theta) \\ 
&V_q''(\theta) =  V_{q-1}(\theta) \Phi''(\theta)[q + \Phi(\theta)]+\Phi'(\theta)^2 V_{q-1}(\theta)\left[\frac{q(q-1)}{\Phi(\theta)}+2q+\Phi(\theta)\right]
\end{aligned}\end{equation}
\underline{Proof of $i)$:} Using \eqref{eq:calculVq} we deduce that 
\begin{equation}
\label{eq:Lyapunov_calcul}
\Phi'(\theta)V_q'(\theta)=V_q(\theta)\left(q\frac{\Phi'(\theta)^2}{\Phi(\theta)}+\Phi'(\theta)^2\right)
\end{equation}
and we derive a lower bound for the bracket term $ q\frac{\Phi'(\theta)^2}{\Phi(\theta)}+\Phi'(\theta)^2$ on $\R$. This lower bound is deduced by a careful analysis in a neighborhood of $\ta$ and outside this small area.

\noindent\textbf{Local study}
 Using the definition \eqref{def:Phi}, we know that  $\Phi'(\ta)=0$.
A Taylor expansion yields:
$$
\Phi'(\theta) =  \Phi'(\ta)+ (\theta-\ta) \Phi''(\ta) + \frac{(\theta-\ta)^2}{2} \ \Phi'''(\xi_\theta),
$$
which implies since $\Phi''(\ta)=f(\ta)>0$ and $\Phi'''=f'$ that:
$$
\Phi'(\theta)^2 = (\theta-\ta)^2 f(\ta)^2 + (\theta-\ta)^3 f(\ta) f'(\xi_{\theta}) + \frac{(\theta-\ta)^4}{4} f'(\xi_\theta)^2.
$$
The Young inequality $|a b| \leq \frac{a^2}{2}+\frac{b^2}{2}$ leads to:
$$
\Phi'(\theta)^2 \ge  (\theta-\ta)^2 \frac{f(\ta)^2}{2} - \frac{(\theta-\ta)^4}{4} \|f'\|_{\infty}^2.
$$
We then define $\varepsilon = f(\ta) \|f'\|_{\infty}^{-1}$ and observe that
\begin{equation}\label{eq:minoration_locale}
\forall \theta \in [\ta-\varepsilon,\ta+\varepsilon] \qquad 
\Phi'(\theta)^2 \ge\frac{f(\ta)^2}{4}  (\theta-\ta)^2.
\end{equation}
In the meantime, we also verify that
$$
\forall \theta \in \R \qquad 
\Phi(\theta) = f(\ta) \frac{(\theta-\ta)^2}{2} + \|f'\|_{\infty} \frac{(\theta-\ta)^3}{6},
$$
so that:
\begin{equation}\label{eq:majoration_locale}
\forall \theta \in [\ta-\varepsilon,\ta+\varepsilon] \qquad \Phi(\theta) \leq \frac{ 2 f(\ta) }{3} (\theta-\ta)^2 .
\end{equation}
We gather \eqref{eq:minoration_locale}, \eqref{eq:majoration_locale} and use them in  Equation \eqref{eq:Lyapunov_calcul}
 to obtain that:

\begin{align*}
\forall \theta \in [\ta-\varepsilon,\ta+\varepsilon] \qquad \Phi'(\theta)  V_q'(\theta) & \ge q V_q(\theta) \frac{\Phi'(\theta)^2}{\Phi(\theta)}\\
& \ge \frac{3q}{8} f(\ta) V_q(\theta). 
\end{align*}
%

\noindent
\textbf{Far from $\ta$}
Remark that $\Phi'$ given by 
$\Phi'(\theta)=\int_{\ta}^{\theta} f(s)ds$ is increasing and negative on $]-\infty,\ta]$ and then positive on $[\ta,+\infty[$. In particular, we have that:
$$
\forall \theta \notin[\ta-\varepsilon,\ta+\varepsilon] \qquad {\Phi'}^2(\theta) \geq {\Phi'}^2\left(\ta+ \varepsilon \right) \wedge {\Phi'}^2\left(\ta- \varepsilon\right) .
$$
A straightforward computation associated with a first order Taylor expansion leads to:
\begin{align*}
\Phi'\left(\ta+ \varepsilon \right) &= \int_{\ta}^{\ta+ \varepsilon} f(s) \text{d}s \\
& = \int_{\ta}^{\ta+\varepsilon } f(\ta) + (s-\ta) f'(\zeta_s) \text{d}s \\
& \ge \int_{\ta}^{\ta+\varepsilon} f(\ta) - (s-\ta) f'(\zeta_s) \text{d}s \\
& \ge \varepsilon f(\ta) - \|f'\|_{\infty} \frac{\varepsilon^2}{2} = \frac{f(\ta)^2}{2 \|f'\|_{\infty}}.
\end{align*}
The same computations also apply for $\ta-\varepsilon$ and we conclude that:
$$
\forall \theta \notin[\ta-\varepsilon,\ta+\varepsilon] \qquad
\Phi'(\theta) ^2 \ge  \frac{f(\ta)^4}{4 \|f'\|_{\infty}^2}.
$$
We use again Equation \eqref{eq:Lyapunov_calcul} and the previous lower bound to deduce that:
$$
\forall \theta \notin [\ta-\varepsilon,\ta+\varepsilon] \qquad \Phi'(\theta)  V_q'(\theta) \ge \frac{f(\ta)^4}{4 \|f'\|_{\infty}^2} V_q(\theta).
$$

\noindent\underline{Proof of $ii)$:}
Since $\Phi''=f$ and $V_q= \Phi V_{q-1}$, Equation \eqref{eq:calculVq} yields
\begin{align*}
|V_q''(\theta)| & \leq V_{q-1}(\theta) |f|_{\infty} \left[ q +\Phi(\theta)\right]+\Phi'(\theta)^2 V_{q-1}(\theta)\left[\frac{q(q-1)}{\Phi(\theta)}+2q+\Phi(\theta)\right] \\
& = V_q(\theta)\left(|f|_{\infty}+2q\frac{\Phi'(\theta)^2}{\Phi(\theta)}+\Phi'(\theta)^2\right) \\ 
&\quad + V_{q-1}(\theta)\left(q |f|_{\infty} + q(q-1)\frac{\Phi'(\theta)^2}{\Phi(\theta)} \right).
 \end{align*}	
We have shown in $i)$ that $\Phi'^2 \Phi^{-1}$ is bounded near $\ta$ whereas the unique zero of $\Phi$ is $\ta$ and $\Phi'$ satisfies $|\Phi'|_{\infty} \leq 1$. Consequently, $\Phi'^2 \Phi^{-1}$ is a bounded function. More precisely, we shall observe that:
$$
\Phi'(\theta)^2 \leq \frac{9}{4} (\theta-\ta)^2 f(\ta)^2 \mathbf{1}_{|\theta-\ta|\leq \varepsilon} + \mathbf{1}_{|\theta-\ta|\ge \varepsilon}
$$
Using that 
$$
\forall \theta \notin [\ta-\varepsilon,\ta+\varepsilon] \quad \Phi(\theta) \ge \Phi(\ta+\varepsilon) \wedge \Phi(\ta-\varepsilon),
$$
and considering $\ta+\varepsilon$, we have:
\begin{align*}
\Phi(\ta+\varepsilon)& = \int_{\ta}^{\ta+\varepsilon} \int_{\ta}^u f(s)  \text{d} s
\text{d}u\\
& \ge  \int_{\ta}^{\ta+\varepsilon} \int_{\ta}^u f(\ta) - (s-\ta) \|f'\|_{\infty}  \text{d} s
\text{d}u\\
& \ge  \frac{f(\ta)}{2} \int_{\ta}^{\ta+\varepsilon} (u-\ta) \text{d}u \ge \frac{f(\ta)}{4} \varepsilon^2 = \frac{f^{3}(\ta)}{4  \|f'\|_{\infty}^2}
\end{align*}
Using the same argument, we observe that
$$
\forall \theta \in [\ta-\varepsilon,\ta+\varepsilon] \qquad 
\Phi(\theta) \ge \frac{f(\ta)}{4} (\theta-\ta)^2
$$
Finally
$$
\Phi'(\theta)^{2} \Phi(\theta)^{-1} \leq \underbrace{9 f(\ta) \vee 4 \frac{\|f'\|_{\infty}^2}{f^3(\ta)}}_{:=C_\alpha}
$$
We then deduce that $c_q>0$ exists such that
$$
V_q''(\theta) \leq c_q [V_q(\theta)+V_{q-1}(\theta)] 
$$
and $c_q=1+q \|f\|_{\infty} + q[(q-1) \vee 2] C_{\alpha}$.

\noindent\underline{Proof of $iii)$}
It remains to derive a lower bound of $\Phi(\theta)$ that involves powers of $(\theta-\ta)$.
We consider the same $\varepsilon$ and use that $\forall \theta\in[\ta-\varepsilon,\ta+\varepsilon]$, $\Phi(\theta)\ge \frac{f(\ta)}{2}\frac{(\theta-\ta)^2}{2}$ and $\Phi'(\theta)\ge \frac{f(\ta)}{2}(\theta-\ta)$.
Then we have for $\theta \geq \ta$:
\begin{eqnarray*}
\Phi(\theta)&=&\int_{\ta}^\theta \int_{\ta}^u f(s) \text{d} s \text{d} u \\
& = & \int_{\ta}^{\theta \wedge (\ta+\varepsilon)} \int_{\ta}^u f(s) \text{d} s \text{d} u+ \int_{\theta \wedge (\ta+\varepsilon)}^{\theta } \int_{\ta}^u f(s) \text{d} s \text{d} u
\end{eqnarray*}
Differentiating whether $\theta>\ta+\varepsilon$ or not, and using the previous bound we deduce
\begin{eqnarray*}
\Phi(\theta)
& \geq & \un_{\theta \leq \ta+\varepsilon} \frac{f(\ta)}{2} \frac{(\theta-\ta)^2}{2}
 + \un_{\theta > \ta+\varepsilon} \frac{f(\ta)}{2} \frac{\varepsilon^2}{2}
 + \un_{\theta > \ta+\varepsilon} \int_{\ta+\varepsilon}^{\theta} \int_{\ta}^{\ta+\varepsilon} f(s)d(s)\text{d}s\text{d}u\\
 & \geq  & \frac{f(\ta)}{2} \left[ \un_{\theta \leq  \ta+\varepsilon}  \frac{(\theta-\ta)^2}{2}  + 
\un_{\theta > \ta+\varepsilon} \left[\frac{\varepsilon^2}{2} + \varepsilon (\theta-\ta-\varepsilon)\right]
 \right] \\
 & \geq &\frac{f(\ta)}{2} \left[ \un_{\theta \leq  \ta+\varepsilon}  \frac{(\theta-\ta)^2}{2} + \un_{\theta > \ta+\varepsilon} \left[
\varepsilon (\theta-\ta) - \frac{\varepsilon^2}{2} 
  \right] \right]\\
  & \geq& \frac{f(\ta)}{2} \left[ \un_{\theta \leq  \ta+\varepsilon}  \frac{(\theta-\ta)^2}{2} + \un_{\theta > \ta+\varepsilon} \left[
\frac{\varepsilon}{2} (\theta-\ta) )
  \right] \right],
\end{eqnarray*}
where the last line comes from the fact that when $\theta \ge \ta+\varepsilon$:
$ \frac{\varepsilon}{2} (\theta-\ta)- \frac{\varepsilon^2}{2} \ge 0$
We finally deduce that: 
\begin{eqnarray*}
\Phi(\theta)
& \geq  &\frac{f(\ta)}{2} \left[ \un_{\theta \leq  \ta+\varepsilon}  \frac{(\theta-\ta)^2}{2} + \un_{\theta > \ta+\varepsilon}   
\frac{\varepsilon (\theta-\ta)}{2}.
 \right]
\end{eqnarray*} 
We repeat the same arguments when $\theta \leq \ta$ and deduce that :
\begin{eqnarray*}
(\theta-\ta)^{2q} &= &(\theta-\ta)^{2q}\un_{|\theta -  \ta| \leq \varepsilon} +
(\theta-\ta)^{2q}\un_{|\theta -  \ta| > \varepsilon}\\
& \leq &   \frac{4^q}{f(\ta)^q} \Phi(\theta)^q
\un_{|\theta -  \ta| \leq \varepsilon} + \frac{4^{2q} \|f'\|_{\infty}^{2q}}{f(\ta)^{4 q}} \Phi(\theta)^{2q}
\un_{|\theta -  \ta| > \varepsilon} \\
& \leq & C_q \left(  V_{q}(\theta)+V_{2q}(\theta)\right).
\end{eqnarray*}
\end{proof}

\begin{lem}
\label{lem:recursion_Vq}
Let us consider the function $V_q$ introduced in \eqref{def:Vq} and recall that there exists $n_0>0$ such that for any $q\ge1$, a $c_q>0$ exists such that:
\begin{align*}
\forall n \ge n_0 \qquad 
\E [ V_q(\tnp)]
& \leq \E \left( V_q(\tn)\right) \left(1-a_{n} m  + c_q a_{n}^2 \right) + c_q a_{n}^2 \E \left( V_{q-1}(\tn)\right)  + c_q a_{n}^{q+1}. 
\end{align*}
Then for any $q\ge1$ there exists $(A_q,B_q,C_q)\ge0$ such that 
$$\E [ V_q(\tn)] \le \exp(A_q-B_q n^{1-a}) + C_q a_n^q,\quad \forall n\ge1.$$
In particular, 
$$
A_1 = \frac{4 a_1^2 c_1 }{2a-1}+\log(2),  \quad B_1= \frac{a_1 m}{2} \quad \text{and} \quad C_1= \frac{8  c_1}{m}
$$
and
$$
A_2 = \frac{c_1^2 a_1}{1-a} +\log(2),  \quad B_2=\frac{a_1 m}{2} \quad \text{and} \quad C_2= \frac{4}{m}\left(c_2+c_2 C_1 + \frac{e^{A_1}}{ a_1}\right).
$$
\end{lem}
\begin{proof}[Proof of Lemma \ref{lem:recursion_Vq}]
\underline{Initialization for $ q=1$:} We prove the result by recursion on $n$. Let us first remark that $V_0(\theta)\le e+V_1(\theta)$, then \eqref{eq:recurrence} becomes
\begin{align}
\E [ V_1(\tnp)]  &\leq \E \left( V_1(\tn)\right) \left(1-a_{n} m  + c_1 a_{n}^2 \right) + c_1 a_{n}^2 \E \left( e+V_1(\tn)\right)  + c_1 a_{n}^{2}\nonumber\\
&\le \E \left( V_1(\tn)\right) \left(1-a_{n} m  + 2c_1 a_{n}^2 \right) + c_1(1+e) a_{n}^{2}.
\label{eq:super_important}
\end{align}
We then apply the same argument as the one used in Theorem 1 of  \cite{Bach_Moulines_2011} and deduce that
$$
\forall n \ge 0 \qquad \E V_1(\tn) \leq 2 \exp\left(\frac{4 a_1^2 c_1}{2a-1} - \frac{a_1 m}{2} n^{1-a}\right) (V_1(0)+2)+ \frac{8 a_1 c_1}{m} n^{-a}.
$$

\noindent which concludes the recursion and prove the property for $q=1$ by defining
$$
A_1 = \frac{4 a_1^2 c_1 }{2a-1}+\log(2),  \quad B_1= \frac{a_1 m}{2} \quad \text{and} \quad C_1= \frac{8  c_1}{m}.
$$

\noindent
\underline{Recursion step:} We assume that the property holds until the integer $q-1$ and we prove similarly by induction on $n$ that the property holds for the integer $q$. 
We then obtain that 
\begin{align*}
\E V_q(\tnp)&\leq (1- m a_{n} + c_1 a_n^2) \E V_q(\tn) + (c_q+c_q C_{q-1}) a_n^{q+1} + a_n^{2}  e^{A_{q-1}-B_{q-1} n^{1-a}}
\end{align*}
We then use the (rough) upper bound:
$$
\E V_q(\tnp) \leq (1- m a_{n} + c_1 a_n^2) \E V_q(\tn) + \left(c_q+c_q C_{q-1}+\frac{e^{A_{q-1}}}{a_1^{q-1}}\right) a_n^{q+1}
$$
We then repeat the arguments of \cite{Bach_Moulines_2011} and deduce that 
$$
\forall n \ge 0 \qquad \E V_q(\tn) \leq 2 \exp\left(A_{q} - B_q  n^{1-a}\right) (V_q(0)+2)+ C_q a_n^q,
$$
where $(A_q,B_q,C_q)$ are given by:
$$
A_q= \frac{c_1^2 a_1}{1-a} + \log(2),\quad B_q = \frac{a_1 m}{2}, \quad \text{and} \quad C_q=\frac{4}{m} \left(c_q+c_q C_{q-1} + e^{A_{q-1}} a_1^{-(q-1)}\right).
$$

\end{proof}

\subsection{Technical results for the averaged quantile estimation $(\btn)_{n \geq 1}$}
\label{app:lem_quantile}
This paragraph gathers technical results useful for the study of $(\btn)_{n \geq 1}$.

\begin{proof}[Proof of Lemma \ref{lem:t1}]
A straigthforward computation yields:
\begin{align*}
a_{n}^2  \left( \epsilon_{n+1}-\frac{1}{n+1}\right)^2 & = a_{n}^2 \left( \frac{1-a_{n} f(\ta)}{1-(n+1)a_{n}f(\ta)}-\frac{1}{n+1} \right)^2\\\nonumber
& = \frac{a_{n}^2 }{(n+1)^2 a_{n}^2 f(\ta)^2} \left[\frac{1-a_{n} f(\ta)}{\frac{1}{(n+1) a_{n} f(\ta)} - 1} - a_{n} f(\ta) \right]^2 \\\nonumber
& = \frac{a_{n}^2 }{(n+1)^2 a_{n}^2 f(\ta)^2} \left[\frac{\frac{n}{n+1}}{1-\frac{1}{(n+1) a_{n} f(\ta)} }  \right]^2 \\\nonumber
 & = \frac{1}{(n+1)^2 f(\ta)^2} \left[ 1+\frac{1}{(n+1)a_nf(\ta)}\frac{1-a_nf(\ta)}{1-\frac{1}{(n+1)a_nf(\ta)}} \right]^2.
\end{align*}
We then conclude that:
\begin{equation}
\label{eq:maj1}
a_{n}^2 \left( \epsilon_{n+1}-\frac{1}{n+1}\right)^2 \leq \frac{1}{(n+1)^2 f(\ta)^2}  + \frac{2}{f(\ta)^3a_1} n^{-(3-a)} + o(n^{-(3-a)}) .
\end{equation}
We recall that from Proposition \ref{prop:lin_btn}:
$$
\left|\E \left[\{\Delta M_{n+1}\}^2\vert \Fn\right]-\alpha(1-\alpha)\right| \leq \|f\|_{\infty} |\ta-\tn|.
$$
Then, the Cauchy-Schwarz inequality yields $\E|\ta-\tn| \leq \sqrt{\E|\ta-\tn|^2}$ so that:
\begin{align*}
\E \{\Delta M_{n+1}\}^2
a_{n}^2 \left( \epsilon_{n+1}-\frac{1}{n+1}\right)^2 
&\leq \left(\frac{1}{(n+1)^2 f(\ta)^2}  + \frac{2}{f(\ta)^3a_1} n^{-(3-a)} + o(n^{-(3-a)})\right)\\
&\quad\times\left(\alpha (1-\alpha ) +  C n^{-a/2}\right),
\end{align*}
from which we deduce the lemma.\end{proof}
The second lemma handles the rests terms given in \eqref{eq:restes}.
\begin{proof}[Proof of Lemma \ref{lem:t2}]
Using the Jensen inequality as well as Proposition \ref{prop:lin_btn} $iii)$, we deduce that:
$$\E(\mathcal{R}_n^2)\le 2\omega_n^2 \E((\tn-\ta)^2)+\frac{a_n^2L^2}{2}(\frac{1}{n+1}-\epsilon_{n+1})^2 \E (\tn-\ta)^4).$$
For the first term, a direct computation shows that:
\begin{equation}
\label{eq:omega}
\omega_n= \frac{-(a+1)}{n^2a_n} + o(n^{-2+a}),
\end{equation}
which entails:
$$
\omega_{n}^2 \lesssim (\epsilon_{n+1}-\epsilon_{n})^2  \lesssim (n+1)^{-4+2a}.
$$
Now, we apply Theorem \ref{theo:momentp} and conclude that
$$
\omega_{n+1}^2 \E (\tn-\ta)^2 \lesssim (n)^{-4+a}.
$$
For the second term, we combine the bound on $a_n^2(1/(N+1)-\epsilon_{n+1})$ obtained in \eqref{eq:maj1} with Theorem \ref{theo:momentp} and deduce that: 
$$\frac{a_n^2L^2}{2}(\frac{1}{n+1}-\epsilon_n)^2 \E (\tn-\ta)^4)\lesssim n^{-2-2a}.$$
Combining the two bounds, we deduce that:
$$\E(\mathcal{R}_n^2)\lesssim n^{-((4-a)\wedge(2+2a))}.$$
\end{proof}

\begin{proof}[Proof of Theorem \ref{theo:rate_averaging}-$ii)$]

We use the key relationship introduced in \cite{pelletier2000} derived from the linearization:
$$
\tnp-\tn = - a_n f(\ta) (\tn-\ta) + a_n R_{n} + a_n \Delta M_{n+1}.
$$
Dividing by $a_n$ and summing, we then obtain that:
$$
\sum_{k=1}^n \left(\frac{\theta_{k+1}}{a_k} - \frac{\theta_k}{a_k} \right)=- f(\ta) \sum_{k=1}^n (\theta_k-\ta) + \sum_{k=1}^n R_{k} + \sum_{k=1}^n \Delta M_{k+1}.
$$
Finally, dividing by $n$ and using an Abel transform, we verify that:
\begin{align}
f(\ta) [\btn-\ta]  &= \frac{1}{n} \left( \frac{\theta_1-\ta}{ a_1} - 
\frac{\tnp-\ta}{ a_{n+1}} + \sum_{k=2}^{n+1} \left(\frac{1}{a_k} - \frac{1}{a_{k-1}}\right) (\theta_k-\ta) + \sum_{k=1}^n R_{k}\right)  \nonumber\\ 
& \qquad+ \frac{1}{n} \sum_{k=1}^n \Delta M_{k+1}. \label{eq:decomposition_pelletier}
\end{align}
The Jensen inequality 
 applied to \eqref{eq:decomposition_pelletier} yields:
\begin{align*}
|\btn-\ta|^{2p} &\leq n^{-2p} 5^{2p-1} \left(\left|\frac{\theta_1-\ta}{a_1}\right|^{2p}+ \left|\frac{\theta_{n+1}-\ta}{a_{n+1}}\right|^{2p} + \left|\sum_{k=2}^{n+1} \left(\frac{1}{a_k} - \frac{1}{a_{k-1}}\right) (\theta_k-\ta)\right|^{2p} \right. \\
& \left.
+\left|\sum_{k=1}^n R_{k+1}\right|^{2p} + \left|\sum_{k=1}^n \Delta M_{k+1}\right|^{2p}
 \right)
\end{align*}
We then compute the expectation of the right hand side terms and remark that:
\begin{equation}\label{eq:term1}
\E \left|\frac{\theta_1-\ta}{a_1}\right|^{2p}  = O(1),
\end{equation}
while Theorem \ref{theo:momentp} yields:
\begin{equation}\label{eq:term2}
\E \left|\frac{\theta_{n+1}-\ta}{a_n}\right|^{2p} = O(a_n^{-p}) = o(n^{p}),
\end{equation}
since $a_n = a_1 n^{-a}$ with $a \in (0,1)$.\\
The generalized Hold\"er inequality yields (see \textit{e.g.} Lemma 4.3 of \cite{godichon2015})
$$
\E \left|\sum_{k=2}^n \left(\frac{1}{a_k} - \frac{1}{a_{k-1}}\right) (\theta_k-\ta)\right|^{2p}  \leq 
\left|\sum_{k=2}^n \left(\frac{1}{a_k} - \frac{1}{a_{k-1}}\right) \left(\E|\theta_k-\ta|^{2p}\right)^{1/2p}  \right|^{2p}.
$$
When $a_k=a_1 k^{-a}$, we can verify that $\frac{1}{a_k} - \frac{1}{a_{k-1}} \lesssim k^{-1+a}$. Using again Theorem
\ref{theo:momentp}, we have
$$
\left(\E|\theta_k-\ta|^{2p}\right)^{1/2p} \lesssim \left(a_n^{p}\right)^{1/2p} = \sqrt{a_n} \lesssim n^{-a/2}.
$$
Therefore a comparison between series and integral implies that:
\begin{equation}\label{eq:term3}
 \E \left|\sum_{k=2}^n \left(\frac{1}{a_k} - \frac{1}{a_{k-1}}\right) (\theta_k-\ta)\right|^{2p}  \lesssim  \left(\sum_{k=2}^n k^{-1+a/2}\right)^{2p} \lesssim n^{a p}.
\end{equation}

Concerning the rest terms, we use that $|R_{k+1}| \lesssim |\theta_k-\ta|^2$ (see Proposition \ref{prop:lin_btn} $iii)$), Theorem \ref{theo:momentp} and we deduce from the generalized Holder inequality that:
\begin{align}
\E \left|\sum_{k=1}^n R_{k+1}\right|^{2p} &\leq
\left( \sum_{k=1}^n \left(\E |R_{k+1}|^{2p} \right)^{1/2p}\right)^{2p} \nonumber \\
&\lesssim \left( \sum_{k=1}^n \left(a_n^{2p} \right)^{1/2p}\right)^{2p}  \lesssim \left( \sum_{k=1}^n a_n\right)^{2p} \lesssim n^{2(1-a)p} \label{eq:term4}.
\end{align}
We remark that $(\Delta M_{k+1})_{k \geq 1}$ is a \textit{bounded} sequence of martingale increments. We shall apply the recursive argument stated in Lemma A.2 of \cite{godichon2015}: a constant $K_p$ exists such that:
\begin{equation}\label{eq:Lpmartingale}
\forall n \geq 1 \qquad \E \left[ \left( \sum_{k=1}^n \Delta M_{k+1} \right)^{2p}\right] \leq K_p n^p.
\end{equation}
We then gather Equations \eqref{eq:term1}-\eqref{eq:Lpmartingale} and obtain that:
$$
\E |\btn-\ta|^{2p} \lesssim n^{-2p} [ O(1)+O(n^{p})+O(n^{ap})+O(n^{2(1-a)p}] = O(n^{-p}),
$$
for $a\in[1/2,1)$, which concludes the proof.
\end{proof}

\begin{proof}[Proof of Lemma \ref{lem:cov}]
We use the definition of $\mathcal{R}_n$ and $\tZnd$ and write that:
\begin{equation}
\label{eq:dev_reste}
E(\mathcal{R}_n\tZnd)=\omega_n\E((\tn-\ta)\tZnd) + a_n\left(\frac{1}{n+1}-\epsilon_{n+1}\right) \E(R_n \tZnd).\end{equation}
\underline{Step 1: Recursive study of the covariance term}\\
Using \eqref{eq:rec-tZn}, we obtain a recursion on this covariance:  
\begin{align*}
\E((\tnp-\ta)\tZnpd)&=\left(1-\frac{1}{n+1}\right) (1-a_nf(\ta)) \E((\tn-\ta)\tZnd) + a_n^2\left(\frac{1}{n+1}-\epsilon_{n+1}\right)\E(\Delta M_{n+1}^2) \\
&+a_n\E(R_n\tZnd)\left(1-\frac{1}{n+1}\right) + \Omega_{n+1},
\end{align*}
where: 
\begin{align*}
\Omega_{n+1}&= \left[a_n\omega_n+\frac{a_n n}{n+1}(1-a_nf(\ta))\right]\E(R_n(\tn-\ta))+ (1-a_nf(\ta)\omega_n \E((\tn-\ta)^2)\\ &+a_n^2\left(\frac{1}{n+1}-\epsilon_{n+1}\right)\E(R_n^2).\end{align*}
Using \eqref{eq:reste_1} and Theorem \ref{theo:momentp}, we deduce that 
$$\Omega_{n+1}=O(n^{-2}).$$
Using first the Cauchy Schwarz inequality as well as \eqref{eq:maj_un1} and then Theorem \ref{theo:momentp}, we deduce that: 
$$a_n\E(R_n\tZnd)\le a_n\E(\tn-\ta)^4)^{1/2} \E(\{\tZnd\}^2)^{1/2}\le n^{-2a-1/2}.$$
Combining with Lemma \ref{lem:t1}
we finally deduce that:
\begin{align*}
\E((\tnp-\ta)\tZnpd)&\le \frac{n}{n+1} (1-a_nf(\ta))\E((\tn-\ta)\tZnd) \\&+ \frac{\alpha(1-\alpha)a_n}{f(\ta)^2(n+1))}+O(n^{-2a-1/2}\vee n^{-1-3a/2}\vee n^{-2}).
\end{align*}
Studying the recursion satisfied by $n\E((\tn-\ta)\tZnd) $ and using similar arguments as in Lemma \ref{lem:recurrence_finale_3}, we deduce that there exists a large enough constant $C$ such that: 
\begin{equation*}
\E((\tn-\ta)\tZnd) \le \frac{\alpha(1-\alpha)}{f(\ta)^3n} + Cn^{-(1\wedge \frac{3a}{2}\wedge (2a-1/2))}.
\end{equation*}
\noindent\underline{Step 2 : Upper-bound of the second term in \eqref{eq:dev_reste}}
Using \ref{eq:reste_1} as well as the Cauchy-Schwarz inequality we obtain that:
\begin{align*}
 \E(R_n \tZnd)&\le  \frac{L}{2}\E((\tn-\ta)^4)^{1/2}\E(\{\tZnd\}^2)^{1/2}.
\end{align*}
Now combining these bounds with Theorem \ref{theo:momentp}, \eqref{eq:maj1} and \eqref{eq:maj_un1} we deduce that:
$$a_n\left(\frac{1}{n+1}-\epsilon_{n+1}\right) \E(R_n \tZnd)\le \frac{L\alpha(1-\alpha)}{2f(\ta)^3(n+1)^{3/2}}K_2 a_n + C n^{-\left(4\wedge (\frac{5}{2}+\frac{a}{2})\wedge \left(\frac{3}{2}+2a\right)\right)}.$$
\underline{Step 3: Conclusion}
\\Combining the two previous bounds with the majoration of $\omega_n$ given by \eqref{eq:omega}, we deduce that: 
$$\E(\mathcal{R}_n\tZnd)\le \frac{-(a+1)}{n^2a_n}  \frac{\alpha(1-\alpha)}{f(\ta)^3n} + \frac{L\alpha(1-\alpha)}{2f(\ta)^3}K_2 a_n n^{-3/2}+ o(n^{-3+a}\vee n^{-3/2-a}).$$
\end{proof}

\section{Technical results for the super-quantile algorithm $(\bvtn)_{n \geq 1}$}
\label{app:calcul_sq}
This paragraph gives the missing details of the proof of Proposition \ref{prop:W_n}.

\begin{lem}\label{lem:tec_etan} Assume that the step-size sequences satisfy $$
a_n =a_1 n^{-a} \qquad \text{and} \qquad b_n = b_1 (n+1)^{-b}\qquad \text{with } 1/2<a<b\le 1.
$$ then a constant $C_\alpha$ exists such that:
$$
|\eta_n| \leq C_{\alpha} n^{-2-b+2a}.$$
\end{lem}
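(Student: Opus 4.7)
My plan is to interpolate and apply the mean value theorem. Setting $a_t := a_1 t^{-a}$ and $b_t := b_1(t+1)^{-b}$ as smooth functions of a real variable $t \geq 1$, define
\[
\phi(t) := \frac{\ta f(\ta)}{1-\alpha} \cdot \frac{(1 - a_t f(\ta))\, b_t}{\bigl(1 - a_t(t+1)f(\ta)\bigr)\bigl(a_t f(\ta) - b_t\bigr)},
\]
so that $\phi(n) = \epsilon_n \delta_n$ and $\eta_n = \phi(n) - \phi(n+1)$. Under $1/2 < a < b \leq 1$ we have $a_t \to 0$, $a_t(t+1) \to \infty$, and $a_t/b_t \to \infty$, so each of the three factors in $\phi$ is eventually bounded away from zero with fixed sign; hence $\phi$ is $C^1$ on $[n,n+1]$ for $n$ large enough, and the mean value theorem yields $|\eta_n| \leq \sup_{t \in [n,n+1]} |\phi'(t)|$.

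It thus suffices to estimate $\phi$ and $\phi'$ asymptotically. The leading-order expansions
\[
\epsilon(t) \sim -\frac{1}{a_t(t+1)f(\ta)} = O(t^{a-1}), \qquad \delta(t) \sim \frac{\ta}{1-\alpha}\cdot \frac{b_t}{a_t} = O(t^{a-b}),
\]
give $\phi(t) = O(t^{2a-1-b})$. The remaining step is to show that the relative derivative satisfies $\phi'(t)/\phi(t) = O(t^{-1})$, which then yields $|\phi'(t)| = O(t^{2a-2-b})$ and, via the mean value bound, the desired $|\eta_n| \leq C_\alpha\, n^{-2-b+2a}$.

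This last step is the main technical point and is handled by logarithmic differentiation, $\phi'/\phi = (\log|\phi|)'$, decomposed into four summands. Three of them are straightforward: $\tfrac{d}{dt}\log b_t = -b/(t+1)$ is $O(t^{-1})$; $\tfrac{d}{dt}\log(1 - a_t f(\ta))$ is of order $a_t/t = O(t^{-a-1})$; and $\tfrac{d}{dt}\log|1 - a_t(t+1)f(\ta)|$ has numerator $-f(\ta)\tfrac{d}{dt}[a_t(t+1)] = -f(\ta) a_t(1 - a - a/t)$ of order $t^{-a}$ divided by a factor of order $t^{1-a}$, producing $O(t^{-1})$. The delicate term is $\tfrac{d}{dt}\log|a_t f(\ta) - b_t|$: its numerator is $-a\,a_t f(\ta)/t + b\,b_t/(t+1) = O(t^{-a-1})$, while its denominator $a_t f(\ta) - b_t$ is equivalent to $a_t f(\ta)$ of order $t^{-a}$ (this is where the hypothesis $a < b$ is essential, since only then does $a_t$ dominate $b_t$), again yielding $O(t^{-1})$. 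Summing the four $O(t^{-1})$ contributions gives $\phi'/\phi = O(t^{-1})$, completing the argument.
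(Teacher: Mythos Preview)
Your proof is correct. The interpolation $\phi(t)$ is well-defined and $C^1$ for $t$ large (the denominators $1 - a_t(t+1)f(\ta)$ and $a_t f(\ta) - b_t$ each have at most finitely many zeros since $a_t(t+1)\to\infty$ and $a_t/b_t\to\infty$ monotonically eventually), and the four logarithmic derivatives are each $O(t^{-1})$ as you verify; the constant for small $n$ is absorbed into $C_\alpha$.

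The paper takes a slightly different route: instead of interpolating the product $\epsilon_n\delta_n$ and differentiating, it applies the discrete product rule $\eta_n = (\epsilon_n-\epsilon_{n+1})\delta_{n+1} + \epsilon_n(\delta_n-\delta_{n+1})$ and bounds the four factors separately, citing $|\epsilon_n-\epsilon_{n+1}|\lesssim n^{-2+a}$ and $|\delta_n-\delta_{n+1}|\lesssim n^{-1-b+a}$ without further detail. Your approach is a bit more self-contained, since the paper's increment bounds on $\epsilon_n$ and $\delta_n$ themselves require a mean-value argument of the same flavour; working directly with $\phi=\epsilon\delta$ via logarithmic differentiation packages all four contributions into one computation. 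Conversely, the paper's decomposition has the small advantage of producing the intermediate estimates $|\epsilon_n-\epsilon_{n+1}|$ and $|\delta_n-\delta_{n+1}|$ explicitly, which could be reused elsewhere.
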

\begin{proof}
We recall from Section \ref{sec:quantile} that a large enough constant $C_\alpha$ exists such that:
$$
|\epsilon_n| \leq \frac{C_\alpha}{n a_n f(\ta)}\qquad \text{and}\qquad |\epsilon_n-\epsilon_{n+1}| \leq  C_\alpha n^{-2+a}.
$$
In the meantime, we can check that a large enough $C_\alpha$ exists such that:
$$
|\delta_n| \leq C_\alpha \frac{\ta }{1-\alpha} n^{-b+a} \qquad \text{and}\qquad |\delta_n-\delta_{n+1}| \leq C_\alpha n^{-1-b+a}.
$$
The triangle inequality yields:
\begin{align*}
|\eta_n| & \leq |\epsilon_{n+1}-\epsilon_{n}|  |\delta_{n+1} | + |\epsilon_n||\delta_n-\delta_{n+1}|  \leq C_\alpha    n^{-2-b+2a}.
\end{align*} 
\end{proof}

We now prove the two lemma contained in the proof of Proposition \ref{prop:W_n}.
\begin{proof}[Proof of Lemma \ref{lem:tec_termes_carres}]
Recall that $U_n$ defined in \eqref{def:U_n} is the sum of four terms so that:
$$\E(U_n^2)\le 4 \E \left[ b_n^2 |\tconn{2}|^2 + \eta_n ^2 
 |\tconn{1}|^2  +\epsilon_{n+1}^2 \delta_{n+1}^2 a_n^2  R_{n}^2 + b_n^2 \tilde{R}_{n}^2\right] .
$$
We consider each term of the previous sum separately. \\We remind that $\tconn{1}  = \theta_n-\ta$. Then,  Lemma \ref{lem:tec_etan} and Theorem \ref{theo:momentp} yield:
$$\eta_n^2  \E(\{\tconn{1}\}^2)\lesssim n^{-4-2b+2a}n^{-a}=n^{-4-2b+a}.$$
Concerning the second term, remark that $\tconn{2}=\tZnd$, so that \eqref{eq:maj_un1} entails:
$$
b_n^2  \E(\{\tconn{2}\}^2) \lesssim n^{-2b-1}.
$$
For the rest term $R_{n}$, we use that:
$R_{n} \leq \frac{L}{2} |\tn-\ta|^2$ and Theorem \ref{theo:momentp}, so that:
$$
\epsilon_{n+1}^2 \delta_{n+1}^2 a_n^2 \E[ R_{n}^2] \lesssim n^{-2+2a} \times n^{-2+2a} \times n^{-2a} \times n^{-a} \lesssim n^{-4+a}.
$$
Finally, for the last term, we use that $|\tilde{R}_{n}| \leq \frac{\|Q''\|_{\infty}}{2(1-\alpha)} |\btn-\ta|^2$. Then, Theorem \ref{theo:momentp} leads to:
$$
b_n^2 \E \tilde{R}_{n}^2 \lesssim b_n^2 \E[|\btn-\ta|^4] \lesssim n^{-2b-2}.
$$
Gathering all the previous bounds and using $1/2 < a < b \leq 1$, we get:
$$\E(U_n^2)\lesssim b_n^2(n^{-1}+n^{-2})+n^{-4+a}+n^{-4-2b+3a} \lesssim n^{-(1+2b)}.$$
\end{proof}

\begin{proof}[Proof of Lemma \ref{lem:tec_termes_martingales}]
We study the variance terms brought by $\Delta\mathcal{V}_n$:
$$
\E(\Delta\mathcal{V}_{n+1}^2)  = \epsilon_{n+1}^2 \delta_{n+1}^2 a_n^2 \E[ \{\Delta M_{n+1}\}^2] + \frac{b_n^2}{(1-\alpha)^2} \E[\{\Delta N_{n+1}\}^2 ] - 2 \frac{\epsilon_{n+1} \delta_{n+1} a_n b_n }{1-\alpha} \E[\Delta M_{n+1} \Delta N_{n+1}],
$$
and study each term separately.\\
\noindent\underline{First term.}
From Lemma \ref{lem:t1}, $\E(\{\Delta M_{n+1}\}^2)\le \alpha(1-\alpha)+Cn^{-a/2}$. Combining this with the upper bound of $\epsilon_n\delta_n=n^{-2+2a}$ in Lemma \ref{lem:tec_etan}, we observe that:
\begin{align*}
\E [(a_n\epsilon_{n+1}\delta_{n+1})^2 \{\Delta M_{n+1}\}^2] 
&\le C n^{-4+2a}.
\end{align*}
\noindent\underline{Second term.}
We use \eqref{eq:martingale_variance} and Theorem \ref{theo:momentp} to deduce that:
$$
\E \left[\left( \frac{b_n}{1-\alpha} \Delta N_{n+1}\right)^2 \right] = \frac{b_n^2}{(1-\alpha)^2} \left[V_{\alpha} + O\left(\sqrt{\MM_{n,2}}\right)\right] =  \frac{b_n^2V_\alpha}{(1-\alpha)^2}+ O\left(b_n^2a_n^{1/2}\right).
$$
\noindent\underline{Third term.} We verify with simple algebra that:
\begin{align*}
\E[ \Delta M_{n+1} \Delta N_{n+1} \vert \Fn] & = \E \left[ (X_{n+1} \un_{X_{n+1} \geq \btn} - Q(\btn)) ( F(\tn)-\un_{X_{n+1} \leq \tn}) \vert \Fn\right] \\
& =  [F(\tn) Q(\btn)] - \E \left[ X_{n+1} \un_{X_{n+1} \geq \btn} \un_{X_{n+1} \leq \tn}\vert \Fn \right] \\
& =  \alpha (1-\alpha) \vta + \E [ F(\tn) Q(\btn)-F(\ta)Q(\ta)]
 - \E \left[ X_{n+1} \un_{X_{n+1} \geq \btn} \un_{X_{n+1} \leq \tn} \vert \Fn\right].
\end{align*}
We apply the Cauchy-Schwarz inequality and obtain that:
$$
\E \left[ X_{n+1} \un_{X_{n+1} \geq \btn} \un_{X_{n+1} \leq \tn}\vert \Fn \right] \leq \sqrt{\E[X^2]} \sqrt{|F(\tn)-F(\tn)|}.
$$
We deduce that a large enough constant exists such that:
$$
\left|\E[ \Delta M_{n+1} \Delta N_{n+1} \vert \Fn] - \alpha(1-\alpha)\vta\right| \leq C \left[\sqrt{|\btn-\ta|}+\sqrt{|\tn-\ta|}+|\btn-\ta|+|\tn-\ta|\right].
$$
In particular, we verify that:
$$
\E[ \Delta M_{n+1} \Delta N_{n+1}] = \alpha (1-\alpha) \vta + O(a_n^{1/4}).
$$

We obtain with simple algebra using Lemma \ref{lem:tec_etan} that:
$$\E \left[2\frac{b_n}{1-\alpha} \epsilon_{n+1}\delta_{n+1} a_n\{\Delta M_{n+1}\} \{\Delta N_{n+1}\} \right]= O(n^{-3+a}).$$
To conclude we gather the three upperbounds and deduce that:
$$\E(\Delta\mathcal{V}_{n+1}^2)= b_n^2\frac{V_\alpha}{(1-\alpha)^2}  + O\left(b_n^2a_n^{1/2}\vee n^{-(3-a)}\right).$$
\end{proof}

\section{Technical results for the central limit theorem}\label{app:tcl}

\begin{proof}[Proof of Proposition \ref{prop:tightness}]
\underline{Proof of $i)$:}
The proof relies on Theorem 7.3 of \cite{billingsley} and is divided into three steps.

\noindent
\underline{Step 1: Tightness of $(\Tztn_0)_{n \geq 1}$.}
We shall remark that 
$$
\sup_{n \geq 1} \E[ \|\Tzn\|^2] \leq \sup_{n \geq 1} \E \tTn ^2 + \sup_{n \geq 1} \E \Vtn^2.
$$
We observe that $\tTn=\sqrt{n}\tZn^{(2)}$ and \eqref{eq:maj_un1} implies: $\sup_{n\ge1}E(\tTn^2)=\sup_{n\ge1}n\E\{\tZn^{(2)}\}^2<+\infty$.

In the meantime, we deal with the initialization of $\Vtn$ 
by applying Theorem \ref{theo:rate_vtn}. We then deduce that:
$
\sup_{n \geq 1} \E \Vtn^2 < + \infty,
$
so that 
$
\sup_{n \geq 1} \E \| \Tzn\|^2 < + \infty.
$
We then conclude that $(\Tztn_0)_{n \geq 1}=(\Tzn)_{n\ge1}$ is a tight sequence.
\medskip

\noindent To prove the tightness of  $(\Tztn)_{n \geq 1}$, 
we follow Theorem 7.3 of \cite{billingsley} and 
we are led to study $\mathbb{P}\left(\sup_{s\in[t,t+\delta]}|\Tztn_s-\Tztn_t| \geq \epsilon \right)$. This difference will be decomposed as:
$$
\Tztn_s-\Tztn_t = B_s^{(n)}-B_t^{(n)} + M_s^{(n)}-M_t^{(n)}.
$$

\noindent
\underline{Step 2: Study of $B_s^{(n)}-B_t^{(n)}$ - Tightness of $(B^{(n)})_{n \geq 1}$.}
We consider $\epsilon>0$, $\delta>0$ and $s \in [t,t+\delta]$. We use the triangle inequality and verify that:
$$
\|B_s^{(n)}-B_t^{(n)}\| \leq \sum_{k=N(n,t)}^{N(n,t+\delta)+1} \|H_{k-1}(\mathbb{Z}_{k-1})\| \lesssim \left\|
\sum_{k=N(n,t)}^{N(n,t+\delta)+1}   \frac{1}{k} \mathbb{Z}_{k-1}\right\|+  \left\|\sum_{N(n,t)}^{N(n,t+\delta)+1} \tilde{\mathcal{E}}_k\right\|.
$$
For the first term of the right hand side, we use the Tchebychev inequality and obtain that:
\begin{equation*}
\mathbb{P}\left( \left| \sum_{N(n,t)}^{N(n,t+\delta)+1}   \frac{1}{k} \mathbb{Z}_{k-1} \right| \geq \epsilon \right) \leq \epsilon^{-2} \mathbb{E} \left( \Bigl(\sum_{N(n,t)}^{N(n,t+\delta)+1}   \frac{\|Z_{k-1}\|}{k}\Bigr)^2 \right).
\end{equation*}
Now, the Cauchy-Schwarz inequality implies that:
$$
\left( \sum_{N(n,t)}^{N(n,t+\delta)+1}   \frac{\|Z_{k-1}\|}{k} \right)^2 \leq \left( \sum_{N(n,t)}^{N(n,t+\delta)+1} \frac{1}{k} \right) 
\left( \sum_{N(n,t)}^{N(n,t+\delta)+1}
   \frac{\|Z_{k-1}\|^2}{k} \right).
$$
Since $\sup_{k \geq 1} \mathbb{E} \|\mathbb{Z}_k\|^2 < + \infty$ and $\sum_{N(n,t)}^{N(n,t+\delta)+1}   \frac{1}{k}  \leq 2 \delta$ for large $n$, we then obtain that:
\begin{equation}\label{eq:tension1}
\mathbb{P}\left( \left| \sum_{N(n,t)}^{N(n,t+\delta)+1}   \frac{1}{k} \mathbb{Z}_{k-1} \right| \geq \epsilon \right) \lesssim
\epsilon^{-2} \delta^2.
\end{equation}

We handle the rest term in the same way and control $\left\|\sum_{N(n,t)}^{N(n,t+\delta)+1}  \tilde{\mathcal{E}}_k \right\|^2$. We apply Equation \eqref{eq:reste}, the triangle inequality and  the Minkowski inequality and deduce that:
\begin{align*}
\left\|\sum_{N(n,t)}^{N(n,t+\delta)+1}  \tilde{\mathcal{E}}_k \right\|^2& \lesssim \left(  \sum_{N(n,t)}^{N(n,t+\delta)+1} 
\frac{|\bar{\theta}_k-\ta|}{k^{3/2}}+
\frac{|\bar{\theta}_k-\ta|^2}{\sqrt{k}} + 
\frac{|\theta_k-\ta|}{k^{3/2-a}}+ \right. \\
&\qquad \left.
\frac{|\theta_k-\ta|^2}{\sqrt{k}} + 
\frac{|\widehat{\vartheta}_k-\vta|}{k^{3/2}} 
  \right)^2 \\
  & \lesssim  \left(\sum_{N(n,t)}^{N(n,t+\delta)} 
\frac{|\bar{\theta}_k-\ta|}{k^{3/2}}\right)^2 + 
\left( \sum_{N(n,t)}^{N(n,t+\delta)} 
\frac{|\bar{\theta}_k-\ta|^2}{\sqrt{k}} \right)^2 
+ \qquad\left( \sum_{N(n,t)}^{N(n,t+\delta)} 
\frac{|\theta_k-\ta|}{k^{3/2-a}}\right)^2 \\
& \qquad+ \left( \sum_{N(n,t)}^{N(n,t+\delta)} 
\frac{|\theta_k-\ta|^2}{\sqrt{k}}\right)^2 +
 \left(\sum_{N(n,t)}^{N(n,t+\delta)} 
\frac{|\widehat{\vartheta}_k-\vta|}{k^{3/2}} 
  \right)^2.
  \end{align*}
  Now we apply the Cauchy Schwarz inequality in order to use $\left(\sum_{N(n,t)}^{N(n,t+\delta)+1} \frac{1}{k}\right)$ in each term:
  \begin{align*}
\left\|\sum_{N(n,t)}^{N(n,t+\delta)+1}  \tilde{\mathcal{E}}_k \right\|^2&\lesssim \left(\sum_{N(n,t)}^{N(n,t+\delta)+1} 
\frac{1}{k}\right)\\&\times \left[
\sum_{N(n,t)}^{N(n,t+\delta)+1} 
\frac{|\bar{\theta}_k-\ta|^2}{k^2} + 
|\bar{\theta}_k-\ta|^4   +\frac{|\theta_k-\ta|^2}{k^{2-2a}}+|\theta_k-\ta|^4+\frac{|\widehat{\vartheta}_k-\vta|^2}{k^2} \right].
\end{align*}
We shall now compute the expectation of the previous terms and verify that 
\begin{align*}
\E \left\|\sum_{N(n,t)}^{N(n,t+\delta)}  \tilde{\mathcal{E}}_k \right\|^2 &\lesssim 
\left(\sum_{N(n,t)}^{N(n,t+\delta)} 
\frac{1}{k} 
  \right) \left(\sum_{N(n,t)}^{N(n,t+\delta)} 
\frac{1}{k} \left[ \frac{\E k |\bar{\theta}_k-\ta|^2}{k^2}
+ \frac{\E k^2 |\bar{\theta}_k-\ta|^4}{k} \right. \right.
\\
& \left. \left. 
+ \frac{k^a \E|\theta_k-\ta|^2}{k^{1-a}} + \frac{\E k^{2a} |\theta_k-\ta|^4}{k^{2a-1}}+\frac{\E k |\widehat{\vartheta}_k-\vta|^2}{k^2}
\right]
  \right).
\end{align*}
Using now Theorem \ref{theo:momentp} ($L^p$ loss on the sequence $(\tn)_{n \geq 1}$), Theorem \ref{theo:momentp} ($L^p$ loss on the sequence $(\btn)_{n \geq 1}$) and Theorem \ref{theo:rate_vtn} ($L^2$ loss on the sequence $(\vtn)_{n \geq 1}$),
we obtain that the bracket term is uniformly bounded in $k$, and therefore:
$$
\E \left\|\sum_{N(n,t)}^{N(n,t+\delta)}  \tilde{\mathcal{E}}_k \right\|^2 \lesssim \left(\sum_{N(n,t)}^{N(n,t+\delta)} 
\frac{1}{k} 
  \right)^2 \lesssim \delta^2.
$$
We then apply the Tchebychev inequality and obtain that:
\begin{equation}\label{eq:tension2}
\mathbb{P}\left( \left| \sum_{N(n,t)}^{N(n,t+\delta)}  \tilde{\mathcal{E}}_k \right| \geq \epsilon \right) \lesssim
\epsilon^{-2} \delta^2.
\end{equation}
Hence, for any $\epsilon>0$, for any $\eta>0$, a small enough $\delta$ exists (of the order $\eta \epsilon^2$)  and a large enough $n_0$ exists such that:
$$
\forall n \geq n_0 \qquad 
\mathbb{P}\left( \left| \sum_{N(n,t)}^{N(n,t+\delta)}  \tilde{\mathcal{E}}_k \right| \geq \epsilon \right) \leq \eta \delta.
$$
Theorem 7.3 of \cite{billingsley} implies that $(B^{(n)})_{n \geq 1}$ is a tight sequence of continuous processes.

\noindent
\underline{Step 3: Study of $M_s^{(n)}-M_t^{(n)}$ - Tightness of $(M^{(n)})_{n \geq 1}$.}
Since $s \in [t,t+\delta]$, $M_s^{(n)}$ safisfies:
$$
\|M_s^{(n)}-M_t^{(n)}\| \leq \max(\|M^{(n)}_{N(n,s)}-M_t^{(n)}\|,\|M^{(n)}_{N(n,s)+1}-M_t^{(n)}\|).
$$
Considering  now the supremum over $s \in [t,t+\delta]$, we then deduce that:
$$
\left\{ \sup_{s \in [t,t+\delta]} \|M_s^{(n)}-M_t^{(n)}\| \geq \epsilon \right\} 
\subset  \left\{ \sup_{N(n,t)+1 \leq k \leq N(n,t+\delta)+1} \|M_{\Gamma_k}^{(n)}-M_t^{(n)}\| \geq \epsilon\right\}.
$$
We then deduce that for any $p>2$:
\begin{align*}
\mathbb{P}\left( \sup_{s \in [t,t+\delta]} \|M_s^{(n)}-M_t^{(n)}\| \geq \epsilon \right)& \leq \mathbb{P}\left(\sup_{N(n,t)+1 \leq k \leq N(n,t+\delta)+1} \|M_{\Gamma_k}^{(n)}-M_t^{(n)}\| \geq \epsilon \right) \\
& \leq \epsilon^{-p} \E \left[ \sup_{N(n,t)+1 \leq k \leq N(n,t+\delta)+1} \|M_{\Gamma_k}^{(n)}-M_t^{(n)}\|^p\right] \\
& \lesssim \left(\frac{p}{p-1}\right)^p \epsilon^{-p} \E \left[\|M_{\Gamma_{N(n,t+\delta)+1}}^{(n)}-M_t^{(n)}\|^p\right],
\end{align*}
where we first apply the Markov inequality and second the Doob maximal inequality.
Now, the Minkowski inequality yields:
\begin{align*}
\mathbb{P}\left( \sup_{s \in [t,t+\delta]} \|M_s^{(n)}-M_t^{(n)}\| \geq \epsilon \right) &\leq \left(\frac{p}{p-1}\right)^p \epsilon^{-p}
\sum_{k=N(n,t)}^{N(n,t+\delta)+1} k^{-p/2} \E  \left\| \left(\begin{matrix}
k a_{k-1} (\frac{1}{k}-\epsilon_k) \Delta M_k \\ \sqrt{b_1} (1-\alpha) \sqrt{\frac{k+1}{k}} \Delta N_k
\end{matrix}\right)\right\|^p
\end{align*}
First, the martingale increments $ (\Delta M_n)_{n \geq 1}
$ are uniformly bounded (see Equation \eqref{eq:mart_inc}). Second, from our assumption $\mathbf{H}_f$,  the random variable $X$ possesses a moment of order $p>2$ and for this value of $p$, we observe that:
\begin{align*}
\mathbb{P}\left( \sup_{s \in [t,t+\delta]} \|M_s^{(n)}-M_t^{(n)}\| \geq \epsilon \right) & \lesssim \epsilon^{-p}\sum_{k=N(n,t)}^{N(n,t+\delta)+1} k^{-p/2}.
\end{align*}
Hence, we obtain that:
$$
\mathbb{P}\left( \sup_{s \in [t,t+\delta]} \|M_s^{(n)}-M_t^{(n)}\| \geq \epsilon \right) \leq \epsilon^{-p} N(n,t)^{-(p-2)/2} \sum_{k=N(n,t)}^{N(n,t+\delta)+1} \frac{1}{k}\leq \frac{\delta }{\epsilon^p N(n,t)^{(p-2)/2}}.
$$
We conclude that for this value of $p>2$, for any $\epsilon>0$ and any $\eta>0$, a $n_0$ exists such that:
$$
\forall n \geq n_0 \qquad 
{\Gamma_n}^{(p-2)/2} \geq \frac{1}{\epsilon^p \eta},
$$
which in turn implies that:
$$
\mathbb{P}\left( \sup_{s \in [t,t+\delta]} \|M_s^{(n)}-M_t^{(n)}\| \geq \epsilon \right) \leq \eta \delta,
$$
because $N(n,t) \geq N(n,0) = \Gamma_n$.
Again, Theorem 7.3 of \cite{billingsley} implies that $(M^{(n)})_{n \geq 1}$ is a tight sequence of continuous stochastic processes. It concludes the proof of the $i)$.

\noindent
\underline{Proof of $ii)$:}
We start from the definition of $\Delta \mathcal{M}_{n+1}$ given in Proposition \ref{prop:lin_TCL}:
\begin{align*}\E[\Delta&\mathcal{M}_{n+1}\Delta\mathcal{M}_{n+1}^t \lvert \mathcal{F}_n] = 
\begin{pmatrix}
S^{1,1}_n & S^{1,2}_n \\ 
S^{2,1}_n & S^{2,2}_n
\end{pmatrix},
\end{align*}
where 
$$
S^{1,1}_n = 
a_n^2(n+1)^2(\frac{1}{n+1}-\epsilon_{n+1})^2\E(\{\Delta M_{n+1}\}^2 \vert \Fn) \quad \text{and} \quad 
S^{2,2}_n = \frac{b_1(n+2)}{(1-\alpha)^2(n+1)}\E(\{\Delta N_{n+1}\}^2\vert \Fn),
$$
and
$$
S^{1,2}_n = S^{2,1}_n = a_n(n+1)\left(\frac{1}{n+1}-\epsilon_{n+1}\right)\frac{\sqrt{b_1}}{(1-\alpha)}\sqrt{\frac{n+2}{n+1}}\E(\Delta M_{n+1}\Delta N_{n+1}\vert \Fn).
$$
We now study each term separately.\\
$\bullet$ Lemma \ref{lem:t1} implies that:
\begin{align*}S_n^{1,1} 
&  =\frac{\alpha(1-\alpha)}{ f(\ta)^2} + O \left( |\tn-\ta|\right).\end{align*}
$\bullet$ Proposition \ref{prop:linearization} yields:
$$S_n^{2,2}
=\frac{b_1 V_\alpha}{(1-\alpha)^2}+O(|\btn-\ta|).$$
$\bullet$ Finally, we deduce from Lemma \ref{lem:tec_termes_martingales} $iii)$ that
\begin{align*}
S_n^{1,2}& = 
a_n(n+1)(\frac{1}{n+1}-\epsilon_{n+1})\frac{\sqrt{b_1}}{(1-\alpha)}\sqrt{\frac{n+2}{n+1}}\E(\Delta M_{n+1}\Delta N_{n+1}\vert \Fn)\\
&=\frac{\sqrt{b_1}\alpha\vta}{f(\ta)}+O\left(\sqrt{|\btn-\ta|}+\sqrt{|\tn-\ta|}+|\btn-\ta|+|\tn-\ta|\right).\end{align*} 
Using our previous results on $(\tn)_{n \geq 1}$ and $(\btn)_{n \geq 1}$,  the conclusion holds.

\noindent
\underline{Proof of $iii)$:}
We consider a twice differentiable test function $\varphi$. Using a second order Taylor expansion and Proposition \ref{prop:linearization}, we obtain that:
$$
\E \left[\varphi(\Tznp) \, \vert \Tzn \right] = \varphi (\Tzn) + \frac{1}{n+1} \mathcal{G}(\varphi)(\Tzn)+\mathbb{Q}_n,
$$
where $\mathcal{G}(\varphi)$ is defined in Equation \eqref{eq:generateur}.
Finally, combining \eqref{eq:reste}, Theorem\ref{theo:rate_averaging}, Theorem \ref{theo:rate_vtn} and the fact that the martingale increments are bounded, we deduce that 
$$E((n+1)\lvert \mathbb{Q}_n\lvert)\underset{n\to\infty}{\longrightarrow}0.$$

\end{proof}

\end{document}